\theoremstyle{plain}
\newtheorem{thm}{Theorem}[section]
\newtheorem{prop}[thm]{Proposition}
\newtheorem{lem}[thm]{Lemma}
\newtheorem{cor}[thm]{Corollary}
\theoremstyle{definition}
\newtheorem{assumption}{Assumption}
\theoremstyle{remark}
\newtheorem{rem}[thm]{Remark}
\numberwithin{equation}{section}
\Crefname{assumption}{Assumption}{Assumptions}
\Crefname{prop}{Proposition}{Propositions}
\Crefname{lem}{Lemma}{Lemmas}
\Crefname{thm}{Theorem}{Theorems}
\newcommand*{\assuref}[1]{\cref{assu:#1}}
  \let\thanks\@gobble
  \def\footnotemark{}
\newcommand*{\email}[1]{{\href{mailto:#1}{\nolinkurl{#1}}}}
\newcommand*{\braket}[1]{\langle #1 \rangle}
\newcommand*{\supp}{\operatorname{supp}}
\newcommand*{\sgn}{\operatorname{sgn}}
\newcommand*{\Law}{\operatorname{Law}}
\newcommand*{\Uniform}{\operatorname{Uniform}}
\newcommand*{\R}{\mathbb{R}}
\newcommand*{\Z}{\mathbb{Z}}
\newcommand*{\N}{\mathbb{N}}
\renewcommand*{\P}{\mathbb{P}}
\newcommand*{\E}{\mathbb{E}}
\newcommand*{\T}{\mathbb{T}}
\newcommand*{\e}{\mathrm{e}}
\newcommand*{\p}{\mathrm{p}}
\newcommand*{\dif}{d}
\newcommand*{\eps}{\varepsilon}
\newcommand*{\one}{\mathbf{1}}
\newcommand*{\anon}{\,\cdot\,}
\newcommand*{\hL}{\hat{L}}
\newcommand*{\smoothing}{\mathfrak{S}_\ell}
\title{Invariant measures for stochastic conservation laws on the line}
\author{
  Theodore D. Drivas\thanks{Mathematics Department, Stony Brook University, Stony Brook, NY 11794 USA. \email{tdrivas@math.stonybrook.edu}}
  \thanks{School of Mathematics, Institute for Advanced Study, 1 Einstein Dr., Princeton, NJ 08540 USA. \email{tdrivas@ias.edu}}\and
  Alexander Dunlap\thanks{Department of Mathematics, Courant Institute of Mathematical Sciences, New York University, New York, NY 10012 USA. \email{alexander.dunlap@cims.nyu.edu}}\and
  Cole Graham\thanks{Division of Applied Mathematics, Brown University, Providence, RI 02912 USA. \email{cole_graham@brown.edu}}\and
  Joonhyun La\thanks{Department of Mathematics, Imperial College London, London SW7 2AZ United Kingdom. \email{jla@ic.ac.uk}}\and
  Lenya Ryzhik\thanks{Department of Mathematics, Stanford University, Stanford, CA 94305 USA. \email{ryzhik@stanford.edu}}
}
\begin{document}
\maketitle

\begin{abstract}
  We consider a stochastic conservation law on the line with solution-dependent diffusivity, a super-linear, sub-quadratic Hamiltonian, and smooth, spatially-homogeneous kick-type random forcing.
  We show that this Markov process admits a unique ergodic spatially-homogeneous invariant measure for each mean in a non-explicit unbounded set.
  This generalizes previous work on the stochastic Burgers equation.
\end{abstract}

\section{Introduction}

We consider the stochastic conservation law
\begin{equation}
  \label{eq:uPDE}
  \partial_{t}u = \partial_{x}\big[\kappa(u)\partial_{x}u - H(u) + V(t,x)\big]
  \quad \text{for } t \in \R_+,\, x \in \R.
\end{equation}
Here, $\kappa(u)$ is a H\"older continuous nonlinear diffusivity bounded from above and below, 
$H(u)$ is a sub-quadratic, super-linear Hamiltonian, and $V(t,x)$ is a random, space-stationary noise that is smooth in space and ``kick-type'' in time.
We defer the precise assumptions on $\kappa$, $H$, and $V$ to \cref{assu:kappa,assu:H,assu:V}, respectively, in \cref{subsec:Assumptions} below.

The stochastic Burgers equation is an important special case of \cref{eq:uPDE},
corresponding to constant~$\kappa$ and $H(u)=u^{2}/2$.
Ergodic properties of this equation  on the whole line 
and with various forms of the random noise $V(t,x)$ have been studied    
in several previous works~\cite{BC21,BL18,BL19,DGR21,DR21}. 
The inviscid case $\kappa\equiv0$ on the line was studied earlier in \cite{Bak16,BCK14}.
We refer to the survey \cite{BK18} for a review of the rich literature,  especially on the torus,  and for many interesting perspectives
on this problem. 
A key result is that the stochastic Burgers equation admits a unique global ergodic spacetime-stationary solution for every mean $a\in\R$ (\cite[Theorem~3.1]{BL19} and \cite[Theorem 1.2]{DGR21}).
In the present paper, motivated  by some problems posed in \cite{BK18},
we partially extend this claim to more general choices of $\kappa(u)$ 
and $H(u)$.
The main result is that under appropriate assumptions on $\kappa$, $H$, and $V$, there is an unbounded set of means for which an ergodic spacetime-stationary solution of \cref{eq:uPDE} exists.
Moreover, each mean admits at most one ergodic spacetime-stationary law.

Most approaches to the stochastic Burgers equation interpret it in terms of directed polymers using the Cole--Hopf transform, which is specific to the Burgers case.
In \cite[§3]{BK18}, the authors propose using a generalization of directed polymers to analyze more general Hamiltonians.
However, to the best of our knowledge, this direction has yet to bear fruit except for some results in \cite{GIKP05}  in the compact setting.
In the present paper we use, instead,
the methods of \cite{DGR21}, which are PDE-based and less strongly
tied to the exact algebraic form of the stochastic Burgers equation.
We note that several previous works have tackled general Hamiltonians on the torus using PDE methods; see \cite{Bor13sharp,Bor14Review,Bor16multi,DV15,DS05}.
The problem on the line is quite different, mostly due to the lack of the Poincar\'e inequality; the main challenge in the present setting is the noncompactness of the domain.

The choice of kick-type $V(t,x)$ is motivated
by relative simplicity of exposition. Because we are interested in the long-time/stationary
behavior of \cref{eq:uPDE}, we do not expect the precise form of $V(t,x)$ to be important from a physical perspective.
However, if $\kappa$ is non-constant, there do seem to be technical challenges involved in extending our results to  white in time~$V(t,x)$, which is the setting considered in \cite{DGR21}.

We note that our results here are
weaker than those in \cite{DGR21} for the stochastic Burgers equation.
We do not show the existence of an invariant measure for  every mean
but only for means in a non-explicit unbounded set,
and we do not prove any stability or convergence result. Previous
proofs of these results for the stochastic Burgers equation rely on the shear-invariance of the stochastic Burgers equation; see \cite[p. R93]{BK18} for discussion.
In particular, the shear-invariance yields the exact form of the so-called shape function. There
is no shear invariance when the Hamiltonian is non-quadratic or when
the diffusivity is non-constant. Similar problems in the discrete setting lacking shear-invariance were studied in \cite{JRA20,JRAS21}.
There, special noise distributions yield an exactly-integrable structure that determines the shape function, compensating for the lack
of shear-invariance.
In addition, \cite{JRA20,JRAS21} proved various results about analogues of invariant measures conditional on certain understanding of the shape function.

We now set up the main result of this paper.
Because the law of the ``kick'' noise $V$ is $1$-periodic in time, we can view the evolution as a discrete time-homogeneous Markov process.
For the state space, we use a weighted function space $\mathcal{X}$ defined in \cref{subsec:functionspaces}.
Let~$\mathscr{P}(\mathcal{X})$ denote the space of Borel probability measures on the Polish space $\mathcal{X}$.
Given a measure~$\nu \in \mathscr{P}(\mathcal{X})$ and~$k \in \Z_{\geq 0}$, define $\mathcal{Q}_k^* \nu \coloneqq \Law u(k-, \anon)$, where $u$ solves \cref{eq:uPDE} with initial condition~$u(0-, \anon) \sim \nu$ that is independent of the noise $V$.
Then $(\mathcal{Q}_k)_{k = 0}^\infty$ forms a discrete time-homogeneous Markov semigroup.

We next define space- and time-invariant measures.
Let $\mathscr{P}_{\R}(\mathcal{X})$ denote the set of measures~$\nu \in \mathscr{P}(\mathcal{X})$ that are invariant under the translation action of $\R$ on $\mathcal{X}$.
Similarly, define
\begin{equation*}
  \overline{\mathscr{P}}(\mathcal{X}) = \big\{\nu \in \mathscr{P}(\mathcal{X}) \mid \mathcal{Q}_1^* \nu = \nu\big\}.
\end{equation*}
Note that, by the semigroup property, $\mathcal{Q}_k^* \nu = \nu$ for all $k \in \Z_{\geq 0}$ and all $\nu \in \overline{\mathscr{P}}(\mathcal{X})$.
Also, let
\begin{equation*}
  \overline{\mathscr{P}}_{\R}(\mathcal{X}) \coloneqq \mathscr{P}_{\R}(\mathcal{X}) \cap \overline{\mathscr{P}}(\mathcal{X}).
\end{equation*}
Finally, let $\overline{\mathscr{P}}_{\R}^{\mathrm{e}}(\mathcal{X})$ denote the set of \emph{extremal} measures in $\overline{\mathscr{P}}_{\R}$.
That is, the set of measures in $\overline{\mathscr{P}}_{\R}(\mathcal{X})$ that cannot be written as nontrivial convex combinations of other elements of~$\overline{\mathscr{P}}_{\R}(\mathcal{X})$.
By a standard result of ergodic theory, these are precisely the ergodic measures of the Markov semigroup.
We define and discuss ergodicity in \cref{subsec:invmeasures}.

With this notation, we can summarize the main result.
\begin{thm}
  \label{thm:main}
  There exist infinitely many $\nu\in\overline{\mathscr{P}}_{\R}^{\mathrm{e}}(\mathcal{X})$ such that if $v \sim \nu$, then $\E H(v)< \infty$ and $\E(\partial_{x}v)^{2}<\infty.$
  Moreover, for each $a \in \R$ there is at most one such measure with $\E v = a$.
\end{thm}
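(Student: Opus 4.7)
The plan is to follow, in broad strokes, the PDE-based strategy of \cite{DGR21}, but replacing the shear-invariance of the stochastic Burgers equation with more robust arguments adapted to a general Hamiltonian and non-constant diffusivity. The proof naturally splits into an existence half, producing infinitely many ergodic invariant measures with the stated moment conditions, and a uniqueness-per-mean half.

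For existence, I would run a Krylov--Bogoliubov argument in the space-homogeneous setting. Starting from the deterministic constant profile $u(0-,\anon) \equiv a$, form the Ces\`aro averages $\bar{\nu}^{(a)}_N \coloneqq \tfrac{1}{N}\sum_{k=0}^{N-1}\mathcal{Q}_k^*\delta_a$; each lies in $\mathscr{P}_{\R}(\mathcal{X})$ by the spatial homogeneity of the noise. The analytic core is to establish uniform-in-$k$ bounds $\sup_k \E H(u(k-,0)) < \infty$ and $\sup_k \E (\partial_x u(k-,0))^2 < \infty$ for $a$ in some unbounded set. These should come from entropy/energy methods applied between kicks: over each unit interval the parabolic-hyperbolic evolution dissipates an entropy whose production controls $\|\partial_x u\|_{L^2}^2$, while the smooth, mean-zero kicks inject only a bounded amount of Hamiltonian per step; the sub-quadratic growth of $H$ is what prevents compounding of the noise input. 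The resulting bounds degrade with $a$, which is precisely the source of the non-explicit, a priori unbounded admissible set of means. With these bounds in hand, tightness of $\{\bar{\nu}^{(a)}_N\}_N$ in the local topology on $\mathcal{X}$ follows from translation-invariance together with compactness of sublevel sets of $\E H(v) + \E (\partial_x v)^2$. Any weak limit $\nu^{(a)}$ lies in $\overline{\mathscr{P}}_{\R}(\mathcal{X})$ and inherits the moment estimates by Fatou. Applying the ergodic decomposition, each decomposition component of $\nu^{(a)}$ is itself an element of $\overline{\mathscr{P}}_{\R}(\mathcal{X})$ of some mean, and the per-mean uniqueness argument below forces the decomposition to be trivial, so $\nu^{(a)} \in \overline{\mathscr{P}}_{\R}^{\mathrm{e}}(\mathcal{X})$.

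For uniqueness at a fixed mean $a$, given two candidates $\nu_1,\nu_2$ satisfying the moment conditions, I would couple stationary solutions $u_1,u_2$ driven by the same realization of $V$ with $\Law u_i(0-,\anon) = \nu_i$. Scalar conservation laws enjoy an $L^1$-contraction: the $L^1$-mass of $u_1-u_2$ over any compact interval is nonincreasing modulo boundary flux. By space-homogeneity and the mean equality $\E u_1 = \E u_2 = a$, the expected boundary flux over $[-R,R]$ is $o(R)$, so the translation-averaged $L^1$-distance is nonincreasing in time. One then runs a backward-in-time (one-force-one-solution) argument: using the regularizing effect of the strictly positive $\kappa(u)$ together with the super-linearity of $H$ to merge characteristics from the distant past, show that any stationary ergodic solution of mean $a$ is a measurable function of $(V(s,\anon))_{s \leq 0}$. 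This identifies $u_1$ and $u_2$ pathwise and gives $\nu_1 = \nu_2$.

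The main obstacle is the uniqueness argument. Without shear-invariance or a Cole--Hopf representation, one must work intrinsically with the entropy-solution theory of general scalar conservation laws: the relevant contraction and monotonicity tools are still available, but quantitative control of how characteristics collapse under a non-quadratic Hamiltonian with H\"older diffusivity is much more delicate than in the Burgers case. The sub-quadratic, super-linear structure of $H$ should be precisely what reconciles the two halves of the proof --- super-linearity drives the characteristic merging that underpins one-force-one-solution, while sub-quadraticity keeps the a priori estimates supporting existence from degenerating.
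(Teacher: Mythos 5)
The existence half of your plan is structurally aligned with the paper (Ces\`aro averages of $\mathcal{Q}_k^*\delta_a$, tightness from moment bounds, ergodic decomposition), but there are two concrete gaps, and the uniqueness half takes a route the paper deliberately avoids.

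First, the analytic core is underspecified where it matters most. The energy estimate between kicks does deliver $\frac{1}{t}\int_0^t\E(\partial_x u)^2 \lesssim 1$ (\cref{lem:derivbd}), but the paper never gets $\E H(u)$ from an entropy inequality. Instead it proves \cref{prop:Hhbound} by a comparison/Cole--Hopf device: the Hamilton--Jacobi potential $g$ with the true Hamiltonian $H$ is bounded \emph{below} by a solution $h$ of the auxiliary equation \cref{eq:hPDE} with the quadratic Hamiltonian $\lambda\kappa(u)u^2$, to which a Cole--Hopf transform \cref{eq:phidef} applies; a Gaussian supersolution (\cref{lem:fkbd}) then controls $\E\phi$ and hence $\E g$ from below, and the moment $\E H(u)$ drops out of integrating the Hamilton--Jacobi equation in time. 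This two-sided game --- comparison with a quadratic Hamiltonian for which Cole--Hopf works, together with the upper bound $H \leq \lambda\kappa_0 u^2 + c_2$ from \cref{eq:Hbd} --- is precisely what replaces shear-invariance, and ``entropy/energy methods'' alone will not produce the required $\E H$ bound with the dependence on $\langle a\rangle$ the paper needs.

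Second, your extraction of an extremal measure from the ergodic decomposition is not valid. You write that per-mean uniqueness ``forces the decomposition to be trivial,'' but the components of the decomposition of $\nu^{(a)}$ need not all have mean $a$; uniqueness per mean gives no constraint across different means, so the decomposition can be genuinely non-trivial. The paper handles this by propagating the moment estimates through the decomposition: the measure $m(\dif\mu)$ on extremals has $\int|\E v[\mu]-a|^q\,m(\dif\mu)\le C\braket{a}^2$ and $\int \E v[\mu]\,m(\dif\mu)=a$, from which one deduces only that $m$ charges some extremal with mean in the window \cref{eq:mean-bd}. This is why the admissible set of means is non-explicit and merely unbounded, rather than all of $\R$ --- that feature comes from the ergodic decomposition losing control of the exact mean, not (as you suggest) from the a priori bounds degrading with $a$.

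For uniqueness, the paper does \emph{not} prove a one-force-one-solution principle, and it explicitly disclaims any stability or synchronization result. Instead it couples two stationary ensembles driven by the same noise (\cref{prop:cancouple}), shows via an $L^1$-type estimate with the convex approximations $F_\eps$ combined with the parabolic Hopf lemma that the difference has spatially constant sign almost surely (\cref{prop:ordering}), and then uses extremality to upgrade the random sign to a deterministic one (\cref{cor:extremal-ordering}). Uniqueness at a fixed mean follows immediately from the ordering, with no backward-in-time argument. Your one-force-one-solution route would be strictly stronger but is unsupported here: the paper's inability to control the shape function (the very reason the set of means is non-explicit) is also why quantitative characteristic merging in the non-shear-invariant setting is out of reach.
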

\begin{rem}
  This theorem follows from somewhat stronger results stated in \cref{subsec:results} below.
  In particular, it is a consequence of \cref{thm:existence} and \cref{cor:uniqueness}.
\end{rem}
We state our precise assumptions and results in \cref{sec:setting}.
In \cref{sec:wellposedness}, we establish a well-posedness theory for \cref{eq:uPDE} in weighted spaces.
We construct invariant measures in \cref{sec:existence} and prove uniqueness in \cref{sec:uniqueness}.

\subsection{Acknowledgments}

We thank  Yuri Bakhtin and Elena Kosygina for helpful discussions, and Scott Armstrong for telling us about the supersolutions used in the proof of \cref{lem:fkbd}. We also thank an anonymous referee for a careful reading of the manuscript and several helpful comments.
T.D. was partially supported by NSF grant DMS-2106233 and the Charles Simonyi Endowment at the Institute for Advanced Study. A.D. and C.G. were partially supported by the NSF Mathematical Sciences Postdoctoral Fellowship program under grants DMS-2002118 and DMS-2103383, respectively.
L.R. was partially supported by NSF grant DMS-1910023 and ONR grant N00014-17-1-2145.

\section{Setting and main results}
\label{sec:setting}

In this section, we clarify the setting of the problem, impose precise hypotheses on the terms of the PDE \cref{eq:uPDE}, and state the main results.

\subsection{Function spaces\label{subsec:functionspaces}}

We study solutions in weighted function spaces similar to those in \cite{DGR21}.
A weight is a positive function $w \colon \R\to\R_{>0}$. 
Given such weight $w$, we let $\mathcal{C}_{w}$ denote the Banach space of continuous functions $f \colon \R\to\R$ such that the
norm
\begin{equation*}
  \|f\|_{\mathcal{C}_{w}} \coloneqq \sup_{x\in\R}\frac{|f(x)|}{w(x)}
\end{equation*}
is finite.
Given $\alpha\in(0,1)$, we also define
\begin{equation*}
  \|f\|_{\mathcal{C}_{w}^{\alpha}} \coloneqq \|f\|_{\mathcal{C}_{w}}+\sup_{0<|x-y|\le1}\frac{|f(x)-f(y)|}{w(x)|x-y|^{\alpha}}.
\end{equation*}
We often use the weights $\p_{\ell}(x)=\langle x\rangle^{\ell}$, where $\langle x\rangle=\sqrt{4+x^{2}}$ and $\ell > 0$.
For each $m \geq 0$, we define the Fréchet space
\begin{equation*}
  \mathcal{X}_{m} \coloneqq \bigcap_{\ell>m}\mathcal{C}_{\p_{\ell}}
\end{equation*}
equipped with the Fréchet topology generated by the norms of $\{\mathcal{C}_{\p_{\ell}}\}_{\ell > m}$.
Throughout the paper, we distinguish the weighted space $\mathcal{X} \coloneqq \mathcal{X}_{\frac{2}{2 + q}}$, where $q \in (1, 2]$ is defined in \assuref{H} below. 
We also sometimes use weighted $L^p$ spaces for $p \in (1, \infty)$, defined by the norms
\begin{equation*}
  \|f\|_{L^p_w} = \left(\int_\R \left(\frac{|f(x)|}{w(x)}\right)^p\,\dif x\right)^{1/p}.
\end{equation*}

\subsection{Assumptions\label{subsec:Assumptions}}

We now state our assumptions on $\kappa$, $H$, and $V$.
\begin{assumption}[Regarding $\kappa$]
  \label{assu:kappa}
  \mbox{}
  \begin{enumerate}[label = \textup{(\roman*)}, leftmargin = 3.5em, labelsep = 1em, itemsep= 1ex, topsep = 1ex]
  \item
    There exists $\kappa_{0} \in (0, 1]$ such that
    \begin{equation}
      \label{eq:alliptic}
      \kappa_{0}\le\kappa(u)\le\kappa_{0}^{-1}\quad\text{for all }u\in\R.
    \end{equation}

  \item
    There exist $\alpha_\kappa \in (1/2, 1)$ and $\beta_{\kappa} \in (0, 1)$ such that $\|\kappa\|_{\mathcal{C}^{\alpha_\kappa}} < \infty$
    and $\kappa \in \mathcal{C}_{\mathrm{loc}}^{1,\beta_\kappa}$.

  \item
    There exists $C_\kappa <+\infty$ such that
  \begin{equation}
    \label{sep2218}
    |\kappa'(u)| \le C_\kappa(1+|u|)\quad \text{for all } u\in\R.
  \end{equation}
  \end{enumerate}
\end{assumption}
\begin{assumption}[Regarding $H$]
  \label{assu:H}
  \mbox{}
  \begin{enumerate}[label = \textup{(\roman*)}, leftmargin = 3.5em, labelsep = 1em, itemsep= 1ex, topsep = 1ex]
  \item
    The Hamiltonian $H$ is in $\mathcal{C}_{\mathrm{loc}}^{1,\alpha_H}$ for some $\alpha_H \in (0, 1)$.

  \item
    There exist constants $\lambda$, $c_1$, $c_2$, $C_H > 0$ and $q\in(1,2]$ such that
    \begin{equation}
      \label{eq:Hbd}
      c_1|u|^{q}-c_1^{-1}\le H(u)\le\lambda\kappa_{0}u^{2}+c_2 \quad \text{for all } u \in \R
    \end{equation}
    and
    \begin{equation}
      \label{eq:Hderivbd}
      |H'(u)|\le C_H(1+|u|)^{q/2} \quad \text{for all } u \in \R.
    \end{equation}
  \end{enumerate}
\end{assumption}
Throughout the paper, we will work on a probability space $(\Omega,\mathcal{F},\P)$.
We assume that it is large enough for all of the random variables that we define.
\begin{assumption}[Regarding $V$]
  \label{assu:V}
  The random distribution $V$ can be written as
  \begin{equation*}
    V(t,x)=\sum_{s\in\Z}\mathsf{V}_{s}(x)\delta(t-s),
  \end{equation*}
  where $(\mathsf{V}_{s})_{s\in\Z}$ is an iid family of space-translation-invariant random functions on $\R$ satisfying the following properties:
  \begin{enumerate}
   \item For each $s\in\Z$, we have $\partial_x \mathsf{V}_s\in\mathcal{X}_0$ almost surely.
   \item We have $\E[\mathsf{V}_s(x)^2]<\infty$ and $\E[(\partial_x\mathsf{V}_s(x))^2]<\infty$.
   \item If we define
   \begin{equation}
    \overline{\mathsf{V}}_s(j) = \inf_{x\in[j,j+1]}\mathsf{V}_s(x),\label{eq:Vmin}
   \end{equation}
   then \begin{equation}\E\e^{-\lambda\overline{\mathsf{V}}_s(j)}<\infty,\label{eq:vbarexpmoment}\end{equation}with $\lambda$ as in \cref{assu:H}.
  \end{enumerate}
\end{assumption}
We note that (by, e.g.,\ the Fernique and Borell--TIS inequalities) smooth, rapidly decorrelating Gaussian random fields satisfy \cref{assu:V}.

\subsection{Invariant measures\label{subsec:invmeasures}}

Let $\Psi_{s}$ be the time-$s$ map of the  unforced dynamics
\begin{equation}
  \label{eq:uPDE-unforced}
  \partial_{t}u = \partial_{x}\big[\kappa(u)\partial_{x}u-H(u)\big].
\end{equation}
That is, if $u$ solves \cref{eq:uPDE-unforced} with $u(0,x)=v(x)$, then
\begin{equation}
  \label{eq:Psisdef}
  \Psi_{s}(v)(x) \coloneqq u(s,x) \quad \text{for } s \geq 0, \, x \in \R.
\end{equation}
In \cref{sec:wellposedness} below, we show that $\Psi_{s}$ is a continuous function from $\mathcal{X}$ to itself for each $s \geq 0$.

Given $s \geq t \geq 0$, if $u$ satisfies the stochastic PDE \cref{eq:uPDE} with $u(t-,x)=v(x)$, we define
\begin{equation*}
  \Phi_{t,s}(v)(x)=u(s-,x).
\end{equation*}
Note that $\Phi$ satisfies the co-cycle property: $\Phi_{t,t} = \operatorname{id}$ and $\Phi_{s,r}\circ\Phi_{t,s}=\Phi_{t,r}$.
Also, if $k\in\Z_{\geq 0}$ and $s\in (0,1]$,
\begin{equation*}
  \Phi_{t,k+s}(v) = \Psi_s\big(\Phi_{t,k}(v)+\partial_x\mathsf{V}_k\big).
\end{equation*}
We also extend $\Psi_{s}$ to act entry-wise on the product space $\mathcal{X}^{N}$:
\begin{equation*}
  \Psi_{s}(v_{1},\ldots,v_{N}) \coloneqq (\Psi_{s}(v_{1}),\ldots,\Psi_{s}(v_{N})).
\end{equation*}
Similarly, we let
\begin{equation*}
  \Phi_{t,s}(v_{1},\ldots,v_{N}) \coloneqq (\Phi_{t,s}(v_{1}),\ldots,\Phi_{t,s}(v_{N})).
\end{equation*}
We emphasize that the initial conditions $(v_{1},\ldots,v_{N})$ evolve subject to the \emph{same} realization of the noise process in the latter notation.

In the introduction, we framed the system in discrete time.
However, we frequently rely on continuous-time methods to control our solutions.
We therefore augment our state space to obtain a  continuous time-homogeneous Markov process,
by keeping track of the time since the last kick.
Given $t\in\R$, let
$\overline{t}$ denote its fractional part, i.e. the image of $t$ under the quotient map $\R\to\R/\Z$.
Set
\begin{equation*}
  \mathcal{A}_{N}=\mathcal{X}^{N}\times(\R/\Z),
\end{equation*}
and, given bounded measurable $F \colon \mathcal{A}_N \to \R$ and $t \geq 0$, define a bounded measurable map
\begin{equation*}
  \mathcal{P}_t F \colon \mathcal{A}_N \to \R
\end{equation*}
by
\begin{equation*}
  (\mathcal{P}_tF)(v_1, \ldots, v_N, \theta) = \E F\big(\Phi_{\theta, \theta + t}(v_1), \ldots, \Phi_{\theta, \theta + t}(v_N), \overline{\theta + t}\big).
\end{equation*}
Then $(\mathcal{P}_{t})_{t\ge0}$ forms a time-homogeneous Markov semigroup.
We define its dual action on~$\mathscr{P}(\mathcal{A}_N)$ by
\begin{equation}
  \label{eq:Ptdef}
  \mathcal{P}_t^*\mu = \Law\big(\Phi_{\theta, \theta + t}(v_1), \ldots, \Phi_{\theta, \theta + t}(v_N), \overline{\theta + t}\big)
\end{equation}
for $\mu \in \mathscr{P}(\mathcal{A}_N)$ and $(v_1, \ldots, v_N, \theta) \sim \mu$.

Next, let $G$ be the group $\R$ or $L \Z$ for some $L > 0$.
We let $\mathscr{P}_{G}(\mathcal{A}_{N})$ denote the set of measures in $\mathscr{P}(\mathcal{A}_{N})$ that are invariant under the action of $G$ on $\mathcal{A}_{N}$ by translation.
We likewise let $\overline{\mathscr{P}}(\mathcal{A}_{N})$ denote the set
of measures $\mu\in\mathscr{P}(\mathcal{A}_{N})$ such that $\mathcal{P}_{t}^{*}\mu=\mu$ for all $t\ge0$.
We observe that if $\mu\in\overline{\mathscr{P}}(\mathcal{A}_{N})$, then the marginal of $\mu$ on $\R/\Z$ is translation-invariant, and hence uniform.
Finally, we again define
\begin{equation}
  \label{eq:PGbar}
  \overline{\mathscr{P}}_{G}(\mathcal{A}_{N})=\mathscr{P}_{G}(\mathcal{A}_{N})\cap\overline{\mathscr{P}}(\mathcal{A}_{N})
\end{equation}
and we let $\overline{\mathscr{P}}_{G}^{\mathrm{e}}(\mathcal{A}_{N})$ denote the set of extremal elements of $\overline{\mathscr{P}}_{G}(\mathcal{A}_{N})$.
\begin{rem}
  \label{rem:ergodic}
  As noted in the introduction, extremality is equivalent to the notion of ergodicity.
  We say that a measure $\mu \in \overline{\mathscr{P}}_{G}(\mathcal{A}_{N})$ is  ergodic if the following holds.
  If $A \subset \mathcal{A}_N$ is a~$G$-invariant Borel set such that $\mathcal{P}_t \one_A = \one_A$ $\mu$-a.s. for all $t \geq 0$, then $\mu(A) = 0$ or $\mu(A) = 1$.
  The equivalence of extremality and ergodicity is a ubiquitous feature of ergodic theory.
  However, our setting falls outside traditional treatments of the subject because it involves both space and time actions.
  To handle our system, we first convert the Markov semigroup to a deterministic dynamical system following the approach of \cite[Section 4]{H08}.
  Then the equivalence of extremality and ergodicity follows from Theorem~3.1 in \cite{Varadarajan}, which treats general group actions on measure spaces.
\end{rem}
\medskip

In a certain sense, time-invariant measures on $\mathcal{A}_N$ are equivalent to those on $\mathcal{X}^N$.
To see this, define a map $\Sigma \colon \mathscr{P}(\mathcal{A}_N) \to \mathscr{P}(\mathcal{X}^N)$ by $\Sigma(\mu) \coloneqq \Law(\Phi_{\theta, 1}(\mathbf{v}))$ for $(\mathbf{v}, \theta) \sim \mu$.
We claim that $\Sigma$ preserves time invariance.
Indeed, the iid nature of the kicks $\mathsf{V}$ and the cocycle property implies:
\begin{equation}
  \label{eq:cocycle}
  \mathcal{Q}_1^*\Sigma(\mu) = \Law(\Phi_{1,2} \circ \Phi_{\theta, 1}(\mathbf{v})) = \Law(\Phi_{\theta, 2}(\mathbf{v})) = \Law(\Phi_{\theta+1, 2} \circ \Phi_{\theta, \theta + 1}(\mathbf{v})).
\end{equation}
Now if $\mu \in \overline{\mathscr{P}}(\mathcal{A}_N)$, we have
\begin{equation*}
  \Law(\mathbf{v}, \theta) = \mu = \mathcal{P}_1^*\mu = \Law(\Phi_{\theta,\theta+1}(\mathbf{v}), \overline{\theta + 1}) = \Law(\Phi_{\theta,\theta+1}(\mathbf{v}), \theta).
\end{equation*}
Thus \cref{eq:cocycle} yields
\begin{equation*}
  \mathcal{Q}_1^*\Sigma(\mu) = \Law(\Phi_{\theta+1, 2} \circ \Phi_{\theta, \theta + 1}(\mathbf{v})) = \Law(\Phi_{\theta+1, 2}(\mathbf{v})) = \Law(\Phi_{\theta, 1}(\mathbf{v})) = \Sigma(\mu),
\end{equation*}
and $\Sigma(\mu) \in \overline{\mathscr{P}}(\mathcal{X}^N)$ as claimed.

Next, we define a map $\Xi \colon \mathscr{P}(\mathcal{X}^N) \to \mathscr{P}(\mathcal{A}_N)$ as follows.
Given $\mathbf{v} \sim \nu \in \mathscr{P}(\mathcal{X}^N)$, take~$\vartheta \sim \Uniform(\R/\Z)$ independent of all else, and
let $\Xi(\nu) \coloneqq \Law(\Phi_{0, \vartheta}(\mathbf{v}), \vartheta)$.
Then we can easily check that $\Sigma \circ \Xi = \mathcal{Q}_1^*$ and, abusing notation, $(\Xi \circ \Sigma)(\mu) = \Law(\mathcal{P}_\vartheta^* \mu)$.
Therefore,~$\Sigma$ and $\Xi$ are inverses on the time-invariant measures $\overline{\mathscr{P}}(\mathcal{A}_N)$ and $\overline{\mathscr{P}}(\mathcal{X}^N)$.
Moreover, these maps commute with spatial translation and convex combination, so they define bijections~$\overline{\mathscr{P}}_G(\mathcal{A}_N) \leftrightarrow \overline{\mathscr{P}}_G(\mathcal{X}^N)$ and $\overline{\mathscr{P}}_G^{\mathrm{e}}(\mathcal{A}_N) \leftrightarrow \overline{\mathscr{P}}_G^{\mathrm{e}}(\mathcal{X}^N)$ as well.

\subsection{Main results}
\label{subsec:results}

We can now precisely state the main theorems.
First, \cref{eq:uPDE} admits infinitely many invariant measures.
\begin{thm}
  \label{thm:existence}
  There is a constant $C<+\infty$ depending only on the constant $\kappa_0$ in \cref{assu:kappa}, the constants $\lambda,c_1,c_2$ in \cref{assu:H}, and the law of $V$, such that the following holds.
  Let the group $G$ be $\R$ or $L\Z$ for some ${L>0}$ and let $X\sim\Uniform(\R/G)$.
  For each~$a \ge 0$, there exists $\mu_{a,G}\in\overline{\mathscr{P}}_{G}^{\mathrm{e}}(\mathcal{A}_1)$ such that if $(v, \theta)\sim\mu_{a,G}$, then
  \begin{equation*}
    \E H(v(X)),\, \E(\partial_{x}v(X))^{2}<\infty
  \end{equation*}
  and
  \begin{equation}
    \label{eq:mean-bd}
    - \frac{\braket{a}}{2}\leq \E v(X) - a \leq C \braket{a}^{\frac{1}{q - 1}}.
  \end{equation}
\end{thm}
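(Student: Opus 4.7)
The plan is to apply the Krylov--Bogolyubov procedure to the continuous-time semigroup $\mathcal{P}_t$. Fix $a \geq 0$ and $G \in \{\R, L\Z\}$, let $u_0 \equiv a$ (constants lie in $\mathcal{X}$), and take the $G$-invariant starting measure $\mu^{(0)} := \delta_{u_0} \otimes \Uniform(\R/\Z) \in \mathscr{P}_G(\mathcal{A}_1)$. I would form the Cesàro averages $\mu^{(T)} := T^{-1}\int_0^T \mathcal{P}_t^*\mu^{(0)} \,\dif t$, extract a weak-$*$ subsequential limit $\mu$ (which is automatically $\mathcal{P}_t$-invariant and $G$-invariant), and identify $\mu_{a,G}$ as an ergodic component of $\mu$. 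By \cref{rem:ergodic}, ergodicity matches extremality in $\overline{\mathscr{P}}_G(\mathcal{A}_1)$, and positive (hence affine) moment bounds descend to $P$-a.e.\ ergodic component.

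The technical core is a uniform-in-$t$ energy estimate for $u(t, \cdot) = \Phi_{0,t}(u_0)$. In each deterministic interval $(k, k+1)$ the dynamics is parabolic with uniformly elliptic $\kappa$; testing the PDE against $u$ on a slowly-decaying spatial weight $w$ and absorbing the Hamiltonian flux into the $\kappa$-dissipation using $H(u) \leq \lambda \kappa_0 u^2 + c_2$ from \cref{assu:H} should yield
\begin{equation*}
  \frac{\dif}{\dif t} \E \int \frac{u^2}{2}\, w\, \dif x + c\, \E \int (\partial_x u)^2\, w\, \dif x \leq C,
\end{equation*}
with constants uniform in $t$. At integer times the kicks $\partial_x \mathsf{V}_s$ contribute only bounded expected increments thanks to the second-moment hypothesis in \cref{assu:V}. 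Iterating yields $\sup_t \E H(u(t, X))$ and $\sup_t \E (\partial_x u(t, X))^2$ finite for $X \sim \Uniform(\R/G)$, and via \cref{eq:Hbd} also $\sup_t \E |u(t, X)|^q < \infty$. Parabolic regularity for the divergence-form equation (using $\kappa \in \mathcal{C}^{\alpha_\kappa}$ and $H \in \mathcal{C}^{1,\alpha_H}_{\mathrm{loc}}$) then upgrades these to uniform weighted-Hölder bounds, so $\{\mu^{(T)}\}$ is tight in $\mathscr{P}(\mathcal{A}_1)$ and the moment bounds survive passage to the weak limit by Fatou.

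The main obstacle, in my view, is the two-sided mean bound \cref{eq:mean-bd}. Since the PDE is in divergence form and each kick $\partial_x \mathsf{V}_s$ is itself a spatial derivative, the expected spatial mean is formally conserved: $\E v(X) = a$ under every $\mu^{(T)}$. Passing to the Cesàro limit, however, mass can escape to $\pm\infty$ through the polynomial-growth freedom of the weight defining $\mathcal{X} = \mathcal{X}_{2/(2+q)}$, which is why only one-sided quantitative control can be extracted. The lower bound $\E v(X) \geq a - \braket{a}/2$ should follow from uniform integrability of $v(X)^-$ under $\{\mu^{(T)}\}$, itself a consequence of the uniform $L^q$ bound with $q > 1$. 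The upper bound $\E v(X) - a \leq C\braket{a}^{1/(q-1)}$ looks harder: the exponent $1/(q-1)$ suggests a quantitative interpolation between the $L^q$ moment and the preserved first moment $a$, with the sharp choice of weight exponent $2/(2+q)$ encoding the balance between the nonlinear flux $H(u)$ and the weighted integrability. Finally, once the bounds are in place for $\mu$, propagating them \emph{simultaneously} to a single ergodic component requires more than Fatou, since the mean bound is two-sided; a Markov-type argument applied to the ergodic decomposition, or instead a direct construction ensuring ergodicity along the way (for example, by averaging only over horoball-symmetric starting data), should close this gap.
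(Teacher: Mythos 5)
Your Krylov--Bogoliubov framework (constant initial data $u_0\equiv a$, Cesàro averaging of $\mathcal{P}_t^*\mu^{(0)}$, extracting a weak limit, then passing to an ergodic component) matches the paper's strategy. However, the technical core of your sketch does not survive scrutiny. First, the claimed uniform-in-$t$ energy bounds $\sup_t \E H(u(t,X))<\infty$ and $\sup_t\E(\partial_x u(t,X))^2<\infty$ are not available: on $\R$ there is no Poincaré inequality to balance the noise injected at each kick against the dissipation, so $\E u(t,\cdot)^2$ can grow linearly in $t$. What actually holds (\cref{lem:derivbd}) is the Cesàro-averaged derivative bound $t^{-1}\int_0^t\E(\partial_x u)^2\,\dif s\le C$, obtained by the energy identity on $[0,L]$ and stationarity (Lemma~D.1 of \cite{DGR21}), not by a weighted test function. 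Second, and more seriously, your energy argument does not produce a bound on $\E H(u)$. The paper's \cref{prop:Hhbound} is proved by a completely different mechanism: reducing to $a=0$ by a Galilean-type shear, passing to the Hamilton--Jacobi potential $g$ (\cref{prop:hamiltonjacobi}), comparing $g$ from below with a Burgers-type supersolution $h$ using the structural bound $H(u)\le\lambda\kappa_0 u^2+c_2$ (\cref{lem:gandhcomp}), applying the Cole--Hopf transform to $h$ (\cref{lem:jan2102}), and bounding the resulting positive solution of a variable-coefficient heat equation by explicit heat-kernel supersolutions (\cref{lem:fkbd,lem:SHE-growth}). This comparison argument is the crux of the existence proof and does not appear in your sketch.

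You also misattribute the asymmetry in \cref{eq:mean-bd}. The Cesàro limit $\overline\mu$ in fact satisfies $\E\overline v=a$ exactly: mean conservation survives the weak limit because the uniform $L^q$ bound with $q>1$ supplies uniform integrability. The asymmetric two-sided bound has nothing to do with ``mass escaping to $\pm\infty$''; it arises entirely from the passage to an ergodic component. Since $\overline\mu$ need not itself be extremal, the paper decomposes it via the ergodic decomposition measure $m$ on $\overline{\mathscr{P}}_G^{\mathrm{e}}(\mathcal{A}_1)$ and studies the random variable $\xi=\E[v[\mu](X)\mid\mu]-a$, which satisfies $\E\xi=0$ (mean conservation) and $\E|\xi|^q\le C\braket{a}^2$ (Jensen and \cref{cor:Hqbound}). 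A Chebyshev-type argument then shows that $\supp(\Law\xi)$ must intersect the interval $[-\braket{a}/2,\,\bar C\braket{a}^{1/(q-1)}]$, which is precisely \cref{eq:mean-bd}. Your final paragraph gestures at this obstacle but does not supply the moment argument that closes it; as stated, the proof has a gap there as well as in the $H$-moment estimate.
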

\begin{rem}
  Let
  \begin{equation*}
    \mathscr{A} \coloneqq \left\{\E v(X) \in \R \mid \Law(v, \theta) \in \overline{\mathscr{P}}_{G}^{\mathrm{e}}(\mathcal{A}_1) \, \text{ and }\, \E H(v(X)),\, \E(\partial_{x}v(X))^{2}<\infty\right\}.
  \end{equation*}
  If $I_a$ denotes the interval in \cref{eq:mean-bd} centered on $a$, then \cref{thm:existence} ensures that $\mathscr{A} \cap I_a \neq \emptyset$.
  Since $I_{a_1} \cap I_{a_2} = \emptyset$ when $a_2 \gg a_1$, we see that $\mathscr{A} \cap \R_+$ is infinite.
  In particular, $\overline{\mathscr{P}}_{G}^{\mathrm{e}}(\mathcal{A}_1)$ is infinite and unbounded.
  Moreover, by replacing $u$ with $-u$, we also obtain infinitely many negative means.

  We observe, however, that it is possible to satisfy \cref{thm:existence} with a rather sparse set of means.
  For instance, the set
  \begin{equation*}
    \mathscr{A} = \left\{\pm \exp\left(cn (q - 1)^{-n}\right) \mid n \in \N\right\}
  \end{equation*}
  is compatible with \cref{eq:mean-bd} provided $c>0$ is sufficiently small.
\end{rem}
\begin{rem}
  In light of the bijection $\Sigma \colon \overline{\mathscr{P}}(\mathcal{A}_1) \to \overline{\mathscr{P}}(\mathcal{X})$ defined above, \cref{thm:existence} implies the existence portion of \cref{thm:main}.
\end{rem}
The next result says that  invariant ensembles of solutions evolving under the same noise can be invariantly coupled and are necessarily \emph{ordered}.
\begin{thm}
  \label{thm:stochordering}
  Fix $N_i \in \N$ for each $i \in \{1, 2\}$.
  Let the group $G$ be $\R$ or $L\Z$ for some ${L>0}$ and let $X\sim\Uniform(\R/G)$.
  Given $\mu_{i}\in\overline{\mathscr{P}}_{G}^{\mathrm{e}}(\mathcal{A}_{N_{i}})$, let $(\mathbf{v}_{i},\theta_{i})\sim\mu_{i}$ with $\mathbf{v}_{i}=(v_{i;1},\ldots,v_{i;N_{i}})$ for $i \in \{1,2\}$.
  Suppose
  \begin{equation*}
    \E H(v_{i;j}(X)), \, \E(\partial_{x}v_{i;j}(X))^{2}<\infty \quad \text{for all } i\in\{1,2\}, \, j \in \{1,\ldots,N_i\}.
  \end{equation*}
  Then there exists $\mu\in\overline{\mathscr{P}}_{G}^{\mathrm{e}}(\mathcal{A}_{N_{1}+N_{2}})$ such that if $((\mathbf{v}_{1},\mathbf{v}_{2}),\theta)\sim\mu$, then
  \begin{equation}
    \label{eq:isacoupling}
    \Law(\mathbf{v}_i, \theta) = \mu_i \quad \text{for each } i\in\{1,2\}
  \end{equation}
  and for each $j_{1}\in\{1,\ldots,N_{1}\}$ and $j_{2}\in\{1,\ldots,N_{2}\}$, there is a deterministic   $\chi_{j_1, j_2} \in \{0, \pm 1\}$ such that almost surely
  \begin{equation}
    \label{eq:ordering}
    \sgn\big(v_{1;j_{1}}(x)-v_{2;j_{2}}(x)\big) = \chi_{j_1, j_2} \quad \text{for all } x \in \R.
  \end{equation}
\end{thm}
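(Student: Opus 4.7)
The plan is to prove \cref{thm:stochordering} in three steps: construct a joint invariant measure by Krylov--Bogoliubov averaging of a synchronously coupled product, extract an extremal component with the correct marginals, and establish the sign-rigidity \cref{eq:ordering} via a Sturm-type argument on the linear parabolic equation satisfied by differences.

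For existence, I would couple $\mu_1$ and $\mu_2$ through a common uniform time variable: draw $\theta \sim \Uniform(\R/\Z)$ and sample $\mathbf{v}_1, \mathbf{v}_2$ conditionally independently with their respective $\mu_i$-conditional laws, giving a starting measure $\mu^{(0)} \in \mathscr{P}(\mathcal{A}_{N_1+N_2})$. Evolving $\mu^{(0)}$ under $\mathcal{P}_t$ with both ensembles sharing the realization of $V$ preserves each $\mathbf{v}_i$-marginal, so the Ces\`aro averages $\nu_T = T^{-1}\int_0^T \mathcal{P}_t^*\mu^{(0)}\,dt$ are tight and any weak limit lies in $\overline{\mathscr{P}}_G(\mathcal{A}_{N_1+N_2})$ and satisfies \cref{eq:isacoupling}. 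The set $K$ of such couplings is nonempty, convex, and weakly compact; extremality of each $\mu_i$ in $\overline{\mathscr{P}}_G(\mathcal{A}_{N_i})$ forces the marginals of any nontrivial decomposition of $\nu \in K$ to agree with $\mu_i$, so extreme points of $K$ lie in $\overline{\mathscr{P}}_G^{\mathrm{e}}(\mathcal{A}_{N_1+N_2})$, and I would pick one such extreme $\mu$.

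For the ordering, fix $j_1, j_2$, let $(\mathbf{v}_1, \mathbf{v}_2, \theta) \sim \mu$, and set $u_i(t, x) = \Phi_{\theta, \theta+t}(\mathbf{v}_i)_{j_i}(x)$ and $w = u_1 - u_2$. Between kicks, each $u_i$ solves \cref{eq:uPDE-unforced}; subtracting and applying the mean value theorem to $\kappa$ and $H$ yields
\begin{equation*}
  \partial_t w = \partial_x\bigl[\kappa(u_1)\,\partial_x w\bigr] + \partial_x\bigl[(A\,\partial_x u_2 - B)\,w\bigr],
\end{equation*}
a linear uniformly parabolic equation whose coefficients are bounded by \cref{assu:kappa}, \cref{assu:H}, and the regularity from \cref{sec:wellposedness}. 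Both $u_i$ receive identical kicks at integer times, so $w$ is continuous across them. The Sturm--Angenent theorem then guarantees that the number of zeros of $w(t, \cdot)$ on any bounded interval is non-increasing in $t$ modulo boundary flux, with strict decrease whenever a zero of multiplicity $\geq 2$ forms in the interior.

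The main obstacle is that $w(0, \cdot)$ may a priori have infinitely many sign changes, rendering a global Sturm count useless. I would overcome this by a density argument: by spatial stationarity, the expected number of sign changes of $w(t, \cdot)$ in $[-R, R]$ equals $2R \rho(t)$ for some density $\rho(t)$, and by time-invariance of $\mu$, $\rho(t) \equiv \rho$. Since interior zeros are only annihilated by Sturm and never created, and the parabolic flow drives adjacent zeros together with positive rate, any $\rho > 0$ would force strict decrease of the expected count in contradiction with stationarity; hence $\rho = 0$. The strong maximum principle applied to the linear parabolic equation between kicks then rules out interior zeros unless $w \equiv 0$, so $\sgn(w(\cdot))$ is a.s.\ a constant element of $\{0, \pm 1\}$. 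Because this constant is a translation-invariant functional of the realization, ergodicity of $\mu$ under the $G$-action promotes it to the deterministic $\chi_{j_1, j_2}$ required by \cref{eq:ordering}.
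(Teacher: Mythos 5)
The coupling construction via Krylov--Bogoliubov averaging and the extraction of an extremal coupling by exploiting extremality of the marginals $\mu_i$ are both sound and in line with the paper (your marginal argument is, if anything, a minor streamlining of the paper's route through \cref{prop:cancouple} and \cref{cor:extremal-ordering}). The serious gap is in the ordering step. Your Sturm--Angenent density argument asserts that ``the parabolic flow drives adjacent zeros together with positive rate, any $\rho > 0$ would force strict decrease of the expected count.'' This is not true: the Sturm principle only guarantees strict decrease of the zero count at moments when a \emph{degenerate} zero (multiplicity $\geq 2$) occurs, while simple zeros can persist indefinitely. Consider $\partial_t w = \partial_x^2 w$ with $w(0,\anon) = \sin$, whose solution $\e^{-t}\sin(x)$ has simple zeros at $\pi\Z$ for all time with constant spatial density. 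Stationarity of $\rho$ is thus perfectly consistent with $\rho>0$ and an ordered collection of persistent simple zero curves; nothing in your argument rules this out.

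The paper closes exactly this gap in \cref{prop:ordering} by a quantitative energy argument: multiplying the difference equation by $F_\eps'(\eta)$, using the H\"older regularity $\|\kappa\|_{\mathcal{C}^{\alpha_\kappa}}<\infty$ with $\alpha_\kappa>1/2$ from \assuref{kappa}(ii) (not the mean-value form you use, which produces unbounded coefficients of size $(1+|u_1|+|u_2|)|\partial_x u_2|$), and Young's inequality yields
\begin{equation*}
  \E \int F_\eps''(\eta)\,|\partial_x\eta|^2 \leq C\,\eps^{2\alpha_\kappa - 1} \to 0.
\end{equation*}
The coarea formula then shows that at the zeros of $\eta$ one has $\partial_x\eta = 0$ almost surely; i.e., the stationarity forces every zero to be \emph{degenerate}. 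Only at this point does a maximum-principle/Hopf-lemma argument apply, giving $\eta\equiv 0$ or no zeros. In short, what you must prove --- and what your proposal implicitly assumes without argument --- is that the invariant-measure structure makes the zeros of $\eta$ degenerate rather than simple; this is the key content of the paper's proof, and it relies on the sharp $\alpha_\kappa > 1/2$ assumption that your proposal never invokes.
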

This ordering implies uniqueness for suitably bounded invariant extremal measures.
\begin{cor}
  \label{cor:uniqueness}
  Fix $N \in \N$.
  Let $G$ be $\R$ or $L\Z$ for some ${L>0}$ and let $X\sim\Uniform(\R/G)$.
  Then for each $\mathbf{a} \in \R^N$, there exists at most one $\mu \in \overline{\mathscr{P}}_G^{\mathrm{e}}(\mathcal{A}_N)$ such that if $(\mathbf{v}, \theta) \sim \mu$, we have $\E \mathbf{v}(X) = \mathbf{a}$ and
  \begin{equation*}
    \E H(v_i(X)), \, \E (\partial_x v_i(X))^2 < \infty \quad \text{for each } i \in \{1, \ldots, N\}.
  \end{equation*}
\end{cor}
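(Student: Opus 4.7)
The plan is to deduce the corollary quickly from \cref{thm:stochordering}; essentially all the analytic content lives in that theorem, and the corollary is a rigidity argument built on top. Suppose $\mu, \mu' \in \overline{\mathscr{P}}_G^{\mathrm{e}}(\mathcal{A}_N)$ both satisfy the stated hypotheses with the same mean vector $\mathbf{a} \in \R^N$. Apply \cref{thm:stochordering} with $N_1 = N_2 = N$, $\mu_1 = \mu$, and $\mu_2 = \mu'$ to produce an invariant, ergodic coupling $\tilde{\mu} \in \overline{\mathscr{P}}_G^{\mathrm{e}}(\mathcal{A}_{2N})$. Write $((\mathbf{v}_1, \mathbf{v}_2), \theta) \sim \tilde{\mu}$ with $\mathbf{v}_i = (v_{i;1}, \ldots, v_{i;N})$, so that $\Law(\mathbf{v}_i, \theta) = \mu_i$ for $i \in \{1,2\}$ and for each $(j_1, j_2)$ there is a deterministic sign $\chi_{j_1, j_2} \in \{0, \pm 1\}$ realizing the pointwise ordering \cref{eq:ordering}.

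The key step is to examine the diagonal pairs. Fix $j \in \{1, \ldots, N\}$ and consider $\chi_{j,j}$. If $\chi_{j,j} = +1$, then almost surely $v_{1;j}(x) > v_{2;j}(x)$ for every $x \in \R$, and in particular $v_{1;j}(X) > v_{2;j}(X)$ a.s. The hypothesis $\E H(v_{i;j}(X)) < \infty$ combined with the lower bound in \cref{eq:Hbd} (which forces $|u|^q \lesssim H(u) + 1$ with $q > 1$) ensures $\E|v_{i;j}(X)| < \infty$, so we may take expectations to obtain $\E v_{1;j}(X) > \E v_{2;j}(X)$, contradicting the assumption that both equal $a_j$. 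A symmetric argument rules out $\chi_{j,j} = -1$, so $\chi_{j,j} = 0$, which forces $v_{1;j}(x) = v_{2;j}(x)$ for all $x \in \R$ almost surely.

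Applying this for every $j$, we conclude that $\tilde{\mu}$ is concentrated on the diagonal $\{\mathbf{v}_1 = \mathbf{v}_2\}$. Because the coupling identifies the $\theta$ coordinates of the two marginals, this yields
\begin{equation*}
  \mu = \Law(\mathbf{v}_1, \theta) = \Law(\mathbf{v}_2, \theta) = \mu',
\end{equation*}
completing the proof. The only nontrivial point is the passage from pointwise strict inequality to strict inequality of means, which is immediate given that $\tilde{\mu}$-a.s.\ the ordering holds at \emph{every} $x$ (not merely almost every) and that the integrability furnished by the moment hypothesis permits taking expectations; thus there is no real obstacle once \cref{thm:stochordering} is in hand.
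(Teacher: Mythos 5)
Your proof is correct and follows the same route as the paper's: apply \cref{thm:stochordering} with $N_1 = N_2 = N$ to get an extremal coupling with deterministic pointwise ordering $\chi_{j_1,j_2}$, then argue that $\chi_{j,j}=0$ for each $j$ because the means agree, so the coupling is supported on the diagonal. The only presentational difference is that you re-derive the implication ``equal means $\Rightarrow \chi_{j,j}=0$'' from scratch (strict pointwise inequality plus $q>1$-moment integrability forces strict inequality of means), whereas the paper reads this off from \cref{cor:extremal-ordering}, which already records that $\chi_{j_1,j_2}=\sgn(a_{j_1}-a_{j_2})$; both handle it correctly and the content is identical.
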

\begin{rem}
  Taking $N = 1$ and $G = \R$, \cref{cor:uniqueness} implies the uniqueness in \cref{thm:main}.
\end{rem}

\section{Well-posedness theory\label{sec:wellposedness}}

In this section, we study the well-posedness of the \emph{unforced} problem in weighted spaces that permit growth at infinity,
which places it outside traditional theory.
While~\cite{DGR21} established well-posedness for the Burgers equation in weighted spaces, the nonlinear diffusivity $\kappa(u)$
presents new difficulties, as we cannot rely on the theory of mild solutions (Duhamel's formula).
\begin{thm}
  \label{thm-sep2702}
  Fix $m\in(0,1)$.
  For each $s\ge0$, the map $\Psi_{s}$ defined in \cref{eq:Psisdef} is a continuous function from $\mathcal{X}_m$ to itself.
\end{thm}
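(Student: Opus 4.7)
The plan is to produce $\Psi_s(v)$ by parabolic approximation and then deduce continuity from the uniform a priori estimates that justify the limit. Given $v \in \mathcal{X}_m$, I would approximate by bounded smooth $v_n \to v$ in $\mathcal{X}_m$. For each $v_n$, classical quasilinear divergence-form parabolic theory (the Ladyzhenskaya--Solonnikov--Ural'ceva framework, applicable thanks to the Hölder and $\mathcal{C}^{1,\beta_\kappa}_{\mathrm{loc}}$ regularity of $\kappa$ from \cref{assu:kappa} and the growth of $H'$ from \cref{eq:Hderivbd}) produces a unique global classical solution $u_n$, satisfying $\|u_n(t)\|_{L^\infty_x} \le \|v_n\|_{L^\infty_x}$ by the scalar maximum principle.

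The heart of the argument is a weighted a priori bound: for each $\ell \in (m, 1)$, I want to show that $u_n$ is dominated pointwise by a function of the form $A(t) + B(t)\p_\ell(x)$, with $A, B$ depending continuously on $t$ and on $\|v_n\|_{\mathcal{C}_{\p_\ell}}$ but not on $n$. I would establish this by constructing explicit super- and subsolutions modeled on $\p_\ell$ with time-dependent prefactors for the non-divergence form
\begin{equation*}
  \partial_t u = \kappa(u)\partial_{xx} u + \kappa'(u)(\partial_x u)^2 - H'(u)\partial_x u,
\end{equation*}
exploiting the following feature: when $\ell < 1$, each of the right-hand-side contributions evaluated on the ansatz $A + B\p_\ell$ grows strictly more slowly than $\p_\ell$ at infinity. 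The binding exponents are $3\ell - 2 < \ell$ from the $\kappa'(\partial_x u)^2$ term via \cref{sep2218}, and $\ell(1 + q/2) - 1 < \ell$ from the $H'\partial_x u$ term via \cref{eq:Hderivbd}; the latter condition is automatic once $\ell < 1$, since $q \le 2$ forces $2/q \ge 1$. The remaining bounded-$x$ contributions can be absorbed into an ODE for $A$ and $B$. A parabolic comparison principle in the class of functions with $\p_{\ell'}$-growth for some $\ell' \in (\ell, 1)$, established by truncating test functions at spatial infinity and letting the truncation radius tend to infinity, then transfers the supersolution bound to $u_n$.

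With these weighted estimates in hand, interior parabolic regularity (Krylov--Safonov plus Schauder estimates using $\kappa \in \mathcal{C}^{\alpha_\kappa}$) yields uniform $\mathcal{C}^{1+\alpha/2, 2+\alpha}_{\mathrm{loc}}$ control of $u_n$, and Arzelà--Ascoli extracts a subsequential classical limit $u$ with $u(0, \cdot) = v$ obeying the same weighted bound. Uniqueness within this growth class follows from the comparison principle applied to the linearized equation satisfied by the difference of two candidate solutions (whose coefficients are bounded on bounded spatial sets thanks to the weighted and regularity bounds), so the full approximating sequence converges and $\Psi_s(v) \in \mathcal{X}_m$ is well defined. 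Continuity in the Fréchet topology of $\mathcal{X}_m$ is then a consequence of running this construction along a sequence $v^{(k)} \to v$ in $\mathcal{X}_m$, using continuous dependence of the comparison bounds on the initial $\mathcal{C}_{\p_\ell}$ norms to get $\Psi_s(v^{(k)}) \to \Psi_s(v)$ in each $\mathcal{C}_{\p_\ell}$ with $\ell > m$.

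The main obstacle is the supersolution construction and the accompanying comparison principle. Since $\kappa$ depends on $u$, there is no heat kernel and no mild-solution framework available, so every a priori estimate must come directly from the PDE through pointwise super- and subsolutions. The quasilinear correction $\kappa'(u)(\partial_x u)^2$, which has no analogue in the Burgers setting of \cite{DGR21}, is what forces the restriction $\ell < 1$, hence the hypothesis $m \in (0,1)$ in the statement; pushing $\ell$ toward $1$ equates the exponents $3\ell - 2$ and $\ell$, and the corresponding ODE for the time-dependent prefactor $B(t)$ exhibits finite-time blow-up.
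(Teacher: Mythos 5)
Your plan follows a genuinely different route than the paper. The paper constructs the solution by periodizing on a torus of length $L$ and taking $L\to\infty$, derives the weighted $L^\infty$ bound via a maximum-principle argument with a time-varying weight whose power exponent grows linearly in $t$, and obtains both uniqueness and continuity from a weighted $L^1$-contraction estimate. Your proposal instead approximates the initial data by bounded functions, derives the weighted bound from explicit super- and subsolutions of the form $A(t)+B(t)\p_\ell$, and appeals to a comparison principle for the linearized difference equation. The exponent bookkeeping for the supersolution is correct, and the resulting a priori estimate should close on any finite time horizon; one caveat is that your closing remark about blow-up is slightly off. The cubic term in the ODE for $B$ appears for every $\ell<1$, not only in the limit $\ell\to1$: what $\ell<1$ buys you is that $\p_{3\ell-2}\le\eps\p_\ell+C_\eps$, so the coefficient of $B^3$ can be made arbitrarily small, allowing the blow-up time of the ODE to be pushed past any fixed $s$. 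You should make this explicit rather than suggest the difficulty only arises as $\ell\to 1$.

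The serious gap is in the uniqueness (and hence continuity) step. Writing $\eta = u - \tilde u$ for two candidate solutions with $\p_{\ell'}$-growth, the natural equation is the divergence form $\partial_t\eta = \partial_x^2(\bar\kappa\eta) - \partial_x(\bar H\eta)$ with $\bar\kappa = \int_0^1\kappa(\tau u + (1-\tau)\tilde u)\,\dif\tau$ and similarly $\bar H$. Passing to the non-divergence form needed for a pointwise comparison argument brings in $\partial_x\bar\kappa$, $\partial_x^2\bar\kappa$, and $\partial_x\bar H$, hence $\partial_x u$ and $\partial_x^2 u$, which you control only locally. Since $\eta$ can have $\p_{\ell'}$-growth of \emph{either} sign, the pointwise maximum is not attained, and the usual fix---a barrier that is a supersolution of the linearized equation and grows faster than $\p_{\ell'}$---requires precisely the global derivative bounds you do not have; ``coefficients bounded on bounded spatial sets'' does not resolve this, because the issue is at spatial infinity. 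The paper's weighted $L^1$-contraction (\cref{prop-sep2402}) is designed to sidestep exactly this: after integrating by parts, the only derivative of the solution that survives is $\partial_x w$ with $w=\mathcal K(u)-\mathcal K(\tilde u)$, and it appears inside a nonnegative dissipation term $F_\eps''(w)(\partial_x w)^2$, so only weighted $L^\infty$ bounds on $u,\tilde u$ are needed. Some estimate of this type seems unavoidable for uniqueness within the growth class; once it is in place, your compactness argument for continuity would go through.
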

This implies that the random dynamical system is Feller.
\begin{cor}
  \label{cor:feller}
  Fix $m\in(0,1)$.
  For any $s<t$, the map $\Phi_{s,t}:\mathcal{X}_m\to\mathcal{X}_m$ is continuous with probability $1$.
  Also, the semigroup $(\mathcal{P}_{t})_{t \geq 0}$ defined in \cref{eq:Ptdef} has the Feller property.
\end{cor}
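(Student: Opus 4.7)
The plan is to exploit the fact that $\Phi_{s,t}$ decomposes, via repeated application of the cocycle identity $\Phi_{t, k+s}(v) = \Psi_s(\Phi_{t, k}(v) + \partial_x \mathsf{V}_k)$, into a finite composition of unforced flows $\Psi_r$ and translations by the kick functions $\partial_x \mathsf{V}_k$.

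For the continuity of $\Phi_{s,t}$, I would first introduce the full-probability event
\begin{equation*}
  \Omega^\star \coloneqq \bigl\{\partial_x \mathsf{V}_k \in \mathcal{X}_0 \text{ for every } k \in \Z\bigr\},
\end{equation*}
whose probability is one by \assuref{V} and the countability of $\Z$. Fix $\omega \in \Omega^\star$ and $s < t$. Then each $\partial_x \mathsf{V}_k$ lies in $\mathcal{X}_0 \subset \mathcal{X}_m$ (the inclusion holds because $m > 0$), so translation by $\partial_x \mathsf{V}_k$ is a continuous self-map of $\mathcal{X}_m$ (indeed an isometry with respect to each $\mathcal{C}_{\p_\ell}$-norm defining the Fréchet topology). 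Combined with the continuity of each $\Psi_r \colon \mathcal{X}_m \to \mathcal{X}_m$ provided by \cref{thm-sep2702}, the finite composition expressing $\Phi_{s,t}$ is continuous, which establishes the first claim.

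For the Feller property, let $F \colon \mathcal{A}_N \to \R$ be bounded and continuous. Boundedness of $\mathcal{P}_t F$ is immediate from $|F| \leq \|F\|_\infty$. To prove continuity, take $(v^{(n)}, \theta_n) \to (v, \theta)$ in $\mathcal{A}_N$. Using the composition structure above together with the joint continuity of $(r, w) \mapsto \Psi_r(w)$ (a consequence of the parabolic regularity theory that underlies \cref{thm-sep2702}), we obtain $\Phi_{\theta_n, \theta_n + t}(v^{(n)}_j) \to \Phi_{\theta, \theta + t}(v_j)$ almost surely for each $j$. Since $\overline{\theta_n + t} \to \overline{\theta + t}$ in $\R/\Z$ and $F$ is continuous, the integrand in $(\mathcal{P}_t F)(v^{(n)}, \theta_n)$ converges a.s.; bounded convergence then yields $(\mathcal{P}_t F)(v^{(n)}, \theta_n) \to (\mathcal{P}_t F)(v, \theta)$.

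The main technical subtlety lies in the $\theta$ variable: as $\theta$ or $\theta + t$ crosses an integer, the set of kicks in $(\theta, \theta + t]$ changes, threatening the pathwise convergence used above. I would handle this by selecting a consistent representative of $\theta$ in a fundamental domain of $\R/\Z$ and performing a case analysis at the critical values; the continuity of $\Psi_r$ at $r = 0^+$ guarantees that the extra factor of $\Psi_r$ appearing (or disappearing) at the boundary of a cell tends to the identity, so the composition limit matches the value on the target side.
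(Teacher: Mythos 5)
Your argument for the almost-sure continuity of $\Phi_{s,t}$ is correct and is essentially the paper's (unwritten) argument: for fixed $s<t$ the map is a finite composition of the continuous maps $\Psi_r$ from \cref{thm-sep2702}, interlaced with translations by $\partial_x\mathsf{V}_k$, which are continuous self-maps of $\mathcal{X}_m$ on the full-measure event that every $\partial_x\mathsf{V}_k$ lies in $\mathcal{X}_0\subset\mathcal{X}_m$.

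The final paragraph of your Feller argument, however, contains a genuine gap precisely where you claim to have resolved the $\theta$-crossing subtlety. You correctly identify the danger---the set of kicks in the window changes as $\theta$ or $\theta+t$ crosses an integer---but the proposed fix, that ``the extra factor of $\Psi_r$ appearing (or disappearing) at the boundary of a cell tends to the identity, so the composition limit matches the value on the target side,'' is false. What appears or disappears at the boundary is not just a $\Psi_r$ with $r\to0^+$; it is the translation by $\partial_x\mathsf{V}_k$, which does \emph{not} tend to the identity. Concretely, with $t\in(0,1)$ and $\theta_0=1-t$, the paper's convention $\Phi_{a,b}(v)=u(b-,\anon)$ gives $\Phi_{\theta_0,\theta_0+t}(v)=\Psi_t(v)$ (the kick $\mathsf{V}_1$ is not applied, since the solution is evaluated at $1-$), whereas for $\eps>0$,
\begin{equation*}
  \Phi_{\theta_0+\eps,\theta_0+\eps+t}(v)=\Psi_\eps\bigl(\Psi_{t-\eps}(v)+\partial_x\mathsf{V}_1\bigr)\xrightarrow[\eps\to0^+]{}\Psi_t(v)+\partial_x\mathsf{V}_1.
\end{equation*}
The $\Psi_\eps$ collapses to the identity exactly as you say, but the random translation $\partial_x\mathsf{V}_1$ survives, so the one-sided pathwise limit differs from the boundary value, and taking the expectation over the noise does not reconcile them: for a typical bounded continuous $F$, $F(\Psi_t(v),0)\ne\E F(\Psi_t(v)+\partial_x\mathsf{V}_1,0)$. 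The same one-sided jump occurs at $\theta=0$ (where $\mathsf{V}_0$ enters the window) and wherever $\theta$ or $\theta+t$ crosses an integer, so bounded convergence fails at those finitely many phases. What your argument does give is continuity of $\mathcal{P}_tF$ in $v$ for each fixed $\theta$, and joint continuity away from this finite exceptional set of $\theta$'s; that weaker conclusion is in fact all that the paper needs, since the only use of \cref{cor:feller} is the Krylov--Bogoliubov step in \cref{thm:existence}, where every measure in play has a uniform $\R/\Z$-marginal and so charges the discontinuity set with probability zero. Two smaller points: the ``joint continuity of $(r,w)\mapsto\Psi_r(w)$'' you invoke is not literally the content of \cref{thm-sep2702}, which fixes $r$, and would require a short separate argument from the time-continuity of solutions established in the well-posedness section; and the cocycle decomposition should track that the kick applied at the \emph{initial} time $\theta$ is included exactly when $\theta\in\Z$, which is the source of the $\theta=0$ discontinuity.
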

The proof  of \cref{thm-sep2702} proceeds as follows.
First, we study the PDE on a torus of length $L$, and prove well-posedness for the periodized problem.
Next, we obtain weighted~$L^\infty$-estimates for the periodized problem that are uniform in $L$.
We can then take $L\to+\infty$ and obtain a solution in $\mathcal{X}_m$.
Finally, we prove the continuity of $\Psi_s$ using weighted $L^1$-stability.

\subsection{Global-in-time well-posedness of the periodized problem}

We first prove well-posedness on the torus $\T_L \coloneqq \R/L\Z$ for $L > 0$.
Consider the unforced problem \cref{eq:uPDE-unforced} on $\T_L$:
\begin{equation}
  \label{sep2210}
  \partial_{t}u = \partial_{x}\big[\kappa(u)\partial_{x}u-H(u)\big], \quad u(0, x)=u_0(x) \quad \text{for } x \in \T_L.
\end{equation}
To state the well-posedness result, we use the notation
\begin{equation}
  \label{eq:parabolic-cylinder}
  Q_r(t,x) = (t-r^2, t)\times B(x,r)
\end{equation}
for $t \geq r^2 > 0$ and $x \in \T_L$.
\begin{prop}
  \label{prop-sep2304}
  There exist $\alpha \in (0, 1)$ and $C_Q \colon \R_+ \to \R_+$ such that the following holds.
  For every $u_0 \in \mathcal{C}( \T_L )$, \cref{sep2210} admits a unique strong solution $u$ that is $\mathcal{C}_{\mathrm{loc}}^{2,\alpha}$ 
  in space and $\mathcal{C}_{\mathrm{loc}}^{1,\alpha/2}$ in time.
  Moreover, for every parabolic cylinder $Q_{2r}(t, x) \subset \R_+ \times \T_L$, we have
  \begin{equation}
    \label{sep2212}
    \|u\|_{\mathcal{C}^{\alpha}(Q_r(t, x))} \le C_Q(r) \|u\|_{L^\infty(Q_{2r}(t, x))}\left(1 + \|u\|_{L^\infty(Q_{2r}(t, x))}\right).
  \end{equation}
\end{prop}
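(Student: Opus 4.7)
The plan is to solve the smoothed problem classically, derive uniform Hölder estimates, and pass to the limit.

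For smooth $u_0 \in \mathcal{C}^\infty(\T_L)$, \cref{assu:kappa} and the regularity of $H$ place \cref{sep2210} within classical quasilinear parabolic theory (Ladyzhenskaya--Solonnikov--Uralceva), yielding a unique short-time classical solution. Expanding to non-divergence form,
\begin{equation*}
  \partial_t u = \kappa(u)\partial_x^2 u + \kappa'(u)(\partial_x u)^2 - H'(u)\partial_x u,
\end{equation*}
the maximum principle gives $\|u(t, \anon)\|_{L^\infty(\T_L)} \le \|u_0\|_{L^\infty(\T_L)}$ (at a spatial maximum $\partial_x u = 0$ and $\kappa(u)\partial_x^2 u \le 0$, so $\partial_t u \le 0$; symmetrically at minima), which precludes finite-time blowup and globalizes the solution.

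For general $u_0 \in \mathcal{C}(\T_L)$, I would mollify to obtain smooth $u_0^\varepsilon \to u_0$ uniformly, consider the corresponding classical solutions $u^\varepsilon$, and extract \cref{sep2212} as a uniform-in-$\varepsilon$ estimate. The key observation is that $H$ may be replaced by $\tilde H(u) \coloneqq H(u) - H(0)$ without altering the PDE, and by the mean value theorem together with \cref{eq:Hderivbd}, $|\tilde H(u)| \le C|u|(1+|u|)$. Writing the PDE as $\partial_t u^\varepsilon = \partial_x\bigl[\kappa(u^\varepsilon)\partial_x u^\varepsilon - \tilde H(u^\varepsilon)\bigr]$, the leading coefficient is uniformly elliptic and bounded measurable by \cref{assu:kappa}, and the flux term $-\tilde H(u^\varepsilon)$ is bounded by $CM(1+M)$ on $Q_{2r}(t,x)$, where $M \coloneqq \|u^\varepsilon\|_{L^\infty(Q_{2r}(t,x))}$. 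The De Giorgi--Nash--Moser theorem for divergence-form parabolic equations with bounded measurable leading coefficient and bounded flux source (see, e.g., Ladyzhenskaya--Solonnikov--Uralceva Ch.~III or Lieberman) then furnishes a Hölder exponent $\alpha \in (0,1)$ depending only on $\kappa_0$, along with \cref{sep2212}, uniformly in $\varepsilon$.

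Arzelà--Ascoli on compact parabolic cylinders passes $u^\varepsilon \to u$ in $\mathcal{C}^{\alpha'}_{\mathrm{loc}}$ for every $\alpha' < \alpha$; continuity of $\kappa$ and $H$ sends the weak formulation through the limit, so $u$ is a weak solution. The bootstrap to higher regularity is then standard: once $u \in \mathcal{C}^\alpha$, the coefficient $\kappa(u(t,x))$ is Hölder in $(t,x)$, and classical Schauder estimates for linear parabolic equations promote $u$ to $\mathcal{C}^{2,\alpha}_{\mathrm{loc}}$ in space and $\mathcal{C}^{1,\alpha/2}_{\mathrm{loc}}$ in time. For uniqueness, given two strong solutions $u_1, u_2$ with matching initial data, the mean value theorem converts $\kappa(u_1)\partial_x u_1 - \kappa(u_2)\partial_x u_2$ and $H(u_1) - H(u_2)$ into linear expressions in $w \coloneqq u_1 - u_2$ and $\partial_x w$, so $w$ satisfies a linear divergence-form parabolic equation with bounded coefficients, and the maximum principle forces $w \equiv 0$.

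The main technical obstacle is extracting the precise $M(1+M)$ dependence in \cref{sep2212}: off-the-shelf DeGiorgi--Nash--Moser estimates produce polynomial dependence on $\|u\|_{L^\infty}$ and $\|F\|_{L^\infty}$, and the trick of subtracting the constant $H(0)$---which does not affect the PDE---converts a bounded but non-vanishing source into one controlled by $|u|(1+|u|)$, which is exactly the form needed.
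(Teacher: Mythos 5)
Your existence construction (mollify the initial data, invoke classical quasilinear theory for smooth data plus a maximum-principle $L^\infty$ bound, then extract a limit via a uniform De~Giorgi estimate and Schauder bootstrap) is a sound alternative to the paper's route (which instead regularizes the diffusion coefficient, $\kappa(\smoothing(u))$, and runs a Picard iteration). One caveat on this side: $\kappa$ is only assumed $\mathcal{C}^{1,\beta_\kappa}_{\mathrm{loc}}$ and $H$ only $\mathcal{C}^{1,\alpha_H}_{\mathrm{loc}}$, so the off-the-shelf LSU existence theorems for quasilinear equations, which typically ask for continuously differentiable coefficients in $(u,p)$, do not apply verbatim to the non-divergence form $\partial_t u=\kappa(u)\partial_x^2u+\kappa'(u)(\partial_xu)^2-H'(u)\partial_xu$; the paper's regularization of the argument of $\kappa$ is precisely what sidesteps this. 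That point can be patched, but it is not truly ``off the shelf.''

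The genuine gap is in the uniqueness step. Linearizing, you write the flux difference as $\kappa(u_1)\partial_xw+(K\partial_xu_2-B)w$ with $K,B$ bounded by the mean value theorem, so $w$ satisfies $\partial_tw=\partial_x[\kappa(u_1)\partial_xw+(K\partial_xu_2-B)w]$. You then claim the coefficients are bounded and apply a maximum principle (or, equivalently, an $L^2$ energy/Gr\"onwall estimate). But the lower-order coefficient contains $\partial_xu_2$, and for $u_0$ merely in $\mathcal{C}(\T_L)$ the interior regularity gives no control on $\partial_xu_2(t,\cdot)$ as $t\to0^+$. For the heat equation with data only continuous one already expects $\|\partial_xu\|_{L^\infty}\sim t^{-1/2}$, so $\int_0^T\|\partial_xu_2\|_{L^\infty}^2\,dt$ can diverge and the Gr\"onwall factor blows up exactly as $\|w(t_1)\|\to0$. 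Nothing in your argument rules out this borderline competition. The paper avoids the issue entirely by working with the Kirchhoff transform $w=\mathcal{K}(u)-\mathcal{K}(v)$, $\mathcal{K}'=\kappa$, so that the difference equation takes the double-divergence form $\partial_t(u-v)=\partial_x^2w-\partial_x(\xi w)$ with $\xi$ bounded by the \emph{$L^\infty$ norms alone} (no gradient of the solution appears). Multiplying by a regularization of $\sgn(w)$ and integrating then gives an $L^1$ contraction $\|u(t_2)-v(t_2)\|_{L^1}\le\|u(t_1)-v(t_1)\|_{L^1}$ for $0<t_1\le t_2$, and the $t_1\to0$ limit closes using only spacetime continuity of $u,v$. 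You would need to reproduce something like this Kru\v{z}kov-style argument, or otherwise establish gradient integrability up to $t=0$, to make the uniqueness claim rigorous.

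One further minor omission: your limit $u$ is only shown to converge in $\mathcal{C}^{\alpha'}_{\mathrm{loc}}(\{t>0\}\times\T_L)$, so continuity of $u$ up to $t=0$ (asserted in the proposition) requires a separate boundary-regularity argument; the paper handles this at the end of its bootstrap lemma.
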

This proposition will follow from \cref{lem:regularizedsoln,lem:rmreg,lem:weakisclassical,prop:prop0916} below.
We first construct a solution to a regularized version of \cref{sep2210}.
We then remove the regularization and show that the limit is a weak solution to~\cref{sep2210}.
A De Giorgi-type estimate allows us to upgrade regularity and show that the weak solution is actually strong.
We show uniqueness of the solution using $L^1$-contraction.

\subsubsection{The regularized problem}

Let $\phi\in\mathcal{C}_c^\infty(\R)$ be a nonnegative bump function with
\begin{equation*}
  \int_{\R} \phi = 1.  
\end{equation*}
Given $\ell>0$, define $\phi_\ell(x) \coloneqq \ell^{-1}\phi(\ell^{-1}x)$ and $\smoothing (u) \coloneqq \phi_\ell * u$, and 
consider the following regularized version of \cref{sep2210}:
\begin{equation}
  \label{sep2302}
  \partial_{t}u^\ell = \partial_{x}\big[\kappa\big(\smoothing (u^\ell)\big)\partial_{x}u^\ell - H(u^\ell)\big], \quad u^\ell(0, x)=u_0(x) \quad \text{for } x \in \T_L.
\end{equation}

\begin{lem}
  \label{lem:regularizedsoln}
  For each $\ell, T >0$, \cref{sep2302} admits a unique weak solution in
  \begin{equation*}
    \mathcal{B}\coloneqq L^2 (0, T; H^2(\T_L)) \cap L^\infty (0, T; H^1(\T_L))
  \end{equation*}
\end{lem}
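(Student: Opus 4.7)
The plan is to construct $u^\ell$ by a Galerkin approximation in the Fourier basis of $L^2(\T_L)$, derive uniform a priori bounds in $\mathcal{B}$, and pass to the limit; uniqueness will follow from an $L^2$ energy estimate that exploits the 1D Sobolev embedding $H^1(\T_L) \hookrightarrow L^\infty(\T_L)$.

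Let $P_n$ denote the orthogonal projection onto the span of the first $n$ Fourier modes and consider the finite-dimensional system
\begin{equation*}
  \partial_t u_n = P_n \partial_x\bigl[\kappa(\smoothing u_n)\partial_x u_n - H(u_n)\bigr], \qquad u_n(0) = P_n u_0,
\end{equation*}
whose right-hand side is locally Lipschitz in the Fourier coefficients of $u_n$, so Cauchy--Lipschitz gives a local solution. Testing first against $u_n$, the flux $H(u_n)\partial_x u_n$ is a perfect derivative on the torus and vanishes; the ellipticity of $\kappa$ then yields a uniform bound on $\|u_n\|_{L^\infty(0,T;L^2)} + \|u_n\|_{L^2(0,T;H^1)}$. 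Testing against $-\partial_x^2 u_n$, the mollification gives the crucial control $\|\partial_x \kappa(\smoothing u_n)\|_{L^\infty} = \|\kappa'(\smoothing u_n)(\phi_\ell' * u_n)\|_{L^\infty} \le C_\ell \|u_n\|_{L^2}$ (with $C_\ell$ allowed to depend on the fixed parameter $\ell$), while the polynomial growth bound $|H'(u)| \lesssim (1+|u|)^{q/2}$ from \cref{assu:H} combined with the 1D interpolation $\|u_n\|_{L^\infty} \le C\|u_n\|_{L^2}^{1/2}\|u_n\|_{H^1}^{1/2}$ tames $H'(u_n)\partial_x u_n$. After Young's inequality and a further Gagliardo--Nirenberg interpolation of $\|\partial_x u_n\|_{L^2}$ between $\|u_n\|_{L^2}$ and $\|\partial_x^2 u_n\|_{L^2}$, the $\kappa_0$-dissipation absorbs all superlinear terms, so Grönwall delivers uniform-in-$n$ bounds in $L^\infty(0,T;H^1) \cap L^2(0,T;H^2) = \mathcal{B}$ and, a posteriori, global existence of the Galerkin system.

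These bounds also control $\partial_t u_n$ in $L^2(0,T;L^2)$, so Aubin--Lions compactness provides a subsequence converging strongly in $L^2(0,T;L^2)$ and almost everywhere. Continuity of $\kappa$ and $H$ together with the smoothing property of $\smoothing$ (which is continuous from $L^2$ to $L^\infty$) let us pass to the limit in every nonlinear term, producing the desired $u^\ell \in \mathcal{B}$. For uniqueness, let $u_1, u_2 \in \mathcal{B}$ solve \cref{sep2302} with the same initial data, set $w = u_1 - u_2$, and test against $w$ to obtain
\begin{equation*}
  \tfrac{1}{2}\tfrac{d}{dt}\|w\|_{L^2}^2 + \int_{\T_L}\kappa(\smoothing u_1)(\partial_x w)^2 = \int_{\T_L}\bigl[\kappa(\smoothing u_2) - \kappa(\smoothing u_1)\bigr]\partial_x u_2\,\partial_x w + \int_{\T_L}\bigl[H(u_1) - H(u_2)\bigr]\partial_x w.
\end{equation*}
Since $u_1, u_2 \in L^\infty(0,T;H^1) \hookrightarrow L^\infty([0,T]\times\T_L)$ in one dimension, the local $\mathcal{C}^1$ regularity of $\kappa$ and $H$ in \cref{assu:kappa,assu:H} yields Lipschitz bounds on the joint range. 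Combined with $\|\smoothing w\|_{L^\infty} \le C_\ell \|w\|_{L^2}$, Young's inequality absorbs a fraction of the $\kappa_0$-dissipation and produces $\tfrac{d}{dt}\|w\|_{L^2}^2 \le C(t)\|w\|_{L^2}^2$ with $C \in L^1(0,T)$, so Grönwall gives $w \equiv 0$.

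The main obstacle will be closing the $H^1$/$H^2$ estimate at the Galerkin level: the supercritical growth of $H'$ and the $\phi_\ell'$ derivative of the mollifier could in principle cause finite-time blow-up of $\|\partial_x u_n\|_{L^2}$, but the second-order dissipation provided by the $\partial_x^2 u_n$ term is strong enough, after interpolation, to dominate these contributions globally in time. The regularization $\smoothing$ is essential for making the equation effectively semilinear with smooth diffusion coefficient along the flow, but does not tame $H$ itself; that task falls to the 1D Sobolev embedding.
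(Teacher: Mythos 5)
Your proof is correct but takes a genuinely different route from the paper's. The paper constructs $u^\ell$ by a Picard-type linearization: $u^{n+1,\ell}$ solves a \emph{linear} parabolic equation whose diffusion and drift coefficients are frozen at $u^{n,\ell}$, so the parabolic maximum principle is immediately available and yields the $n$-uniform bound $\|u^{n,\ell}(t,\anon)\|_{L^\infty}\le\|u_0\|_{L^\infty}$; with that in hand, the $H^1$/$H^2$ energy estimate closes directly, and Aubin--Lions passes to the limit in $n$. You instead use a Fourier--Galerkin truncation, for which no maximum principle is available at the truncated level, so you substitute the one-dimensional embedding $H^1(\T_L)\hookrightarrow L^\infty(\T_L)$ together with Gagliardo--Nirenberg interpolation to tame $\kappa'(\smoothing u_n)$ (via \cref{sep2218}) and $H'(u_n)$ (via \cref{eq:Hderivbd}, using $q\le 2$), and the resulting superlinear terms in $\|\partial_x u_n\|_{L^2}$ are absorbed by the $\kappa_0$-dissipation after a further interpolation of $\|\partial_x u_n\|_{L^2}$ between $\|u_n\|_{L^2}$ and $\|\partial_x^2 u_n\|_{L^2}$. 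Both schemes close and use essentially identical Aubin--Lions and $L^2$-energy uniqueness arguments. One practical advantage of the paper's iteration is that the maximum-principle bound is manifestly \emph{independent of $\ell$}, and this exact $\ell$-uniform $L^\infty$ bound is reused in the proof of \cref{lem:rmreg} (\cref{eq:Linftyul}); with the Galerkin construction one would recover it by applying the maximum principle directly to the limit $u^\ell$ once it is known to be classical, which is also legitimate. Finally, note that both your argument and the paper's implicitly need $u_0\in H^1(\T_L)$ for the $L^\infty(0,T;H^1)$ membership to hold up to $t=0$ (in your case $\|\partial_x P_n u_0\|_{L^2}$ must remain bounded as $n\to\infty$); this is a shared caveat, not a defect particular to your approach.
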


\begin{proof}
  Fix $\ell, T > 0$.
  Let $u^{0,\ell}(t,x) \coloneqq u_0(x)$ and, for $n\ge 0$, let $u^{n+1,\ell}$ solve the linear problem
  \begin{equation}
    \label{eq:0811}
    \partial_t u^{n+1, \ell} = \partial_x \big[ \kappa\big( \smoothing ( u^{n, \ell})\big) \partial_x u^{n+1, \ell}\big] - H'(u^{n, \ell}) \partial_x u^{n+1, \ell}, \quad u^{n+1,\ell}(0,x)=u_0(x).
  \end{equation}
  The solution to \cref{eq:0811} exists and is unique by standard theory.
  The maximum principle implies that
  \begin{equation}
    \label{sep2214}
    \|u^{n,\ell}(t,\anon)\|_{L^\infty(\T_L)}  \le \|u_0\|_{L^\infty(\T_L)}\quad\text{for all } n\ge 0.
  \end{equation}
  Differentiating \cref{eq:0811}, multiplying by $\partial_x u^{n+1,\ell}$, and integrating  over $\T_L$ yields
  \begin{align}
    \frac 12  \frac{\dif}{\dif t} \| &\partial_x u^{n+1, \ell} \|_{L^2 }^2 + \kappa_0 \| \partial_x^2  u^{n+1, \ell} \|_{L^2  } ^2\notag
    \\&\leq -\int_{\T_L} (\partial_x ^2 u^{n+1,\ell} )
    \kappa'( \smoothing (u^{n,\ell}) )
    \smoothing (\partial_x u^{n,\ell}) \partial_x u^{n+1, \ell} +\int_{\T_L} H'(u^{n,\ell}) (\partial_x u^{n+1, \ell})( \partial_x^2 u^{n+1, \ell})\notag \\&\le
    C(T,\ell)\int_{\T_L} |\partial_x ^2 u^{n+1,\ell}|
    | \partial_x u^{n+1, \ell}|.\label{sep2304}
  \end{align}
  In the last step, we used \cref{sep2218} and \cref{sep2214} as well as the bound
  \begin{equation*}
    |\smoothing(\partial_xu^{n,\ell})|\le C(\ell)\|u^{n,l}\|_{L^\infty}\le C(\ell)
  \end{equation*}
  that follows from \cref{sep2214}. We see from \cref{sep2304} that
  \begin{equation*}
    \frac{\dif}{\dif t} \| \partial_x u^{n+1, \ell} \|_{L^2 }^2 +\kappa_0 \| \partial_x^2  u^{n+1, \ell} \|_{L^2  } ^2 \le C(T,\ell) \| \partial_x u^{n+1, \ell} \|_{L^2  }^2.
  \end{equation*}
  Therefore, we have
  \begin{equation}
    \label{sep2221}
    \|u^{n+1, \ell}\|_{L^\infty (0, T; H^1(\T_L) ) } + \| u^{n+1, \ell} \|_{L^2 (0, T; H^2(\T_L) ) } \le C(T, \ell).
  \end{equation}
  Moreover, it follows from \cref{eq:0811,sep2221} that
  \begin{equation*}
    \|\partial_t u^{n+1,\ell}\|_{L^2 ((0, T) \times \T_L)} \leq C(T, \ell).
  \end{equation*}
  By the Aubin--Lions lemma, as $n\to \infty$, $u^{n,\ell}$ converges along a subsequence strongly in both~$L^2(0, T; \mathcal{C}^{1, \alpha})$ and~$\mathcal{C}((0, T); \mathcal{C}^{\alpha})$ for every $\alpha \in (0, 1/2)$.
  We replace $(u^{n,\ell})_{n \geq 0}$ by this subsequence and let $u^\ell$ denote the limit.
  Then in particular, $u^{n, \ell} \rightarrow u^{\ell}$ uniformly on~$[0, T] \times \T_L$.
  Because $\kappa$ and $H'$ are continuous, we know that
  \begin{equation*}
    \kappa(\smoothing (u^{n, \ell}) ) \rightarrow \kappa ( \smoothing (u^\ell ) ) \quad \text{and} \quad H'(u^{n, \ell}) \rightarrow H'(u^\ell )
  \end{equation*}
  uniformly.
  It follows that the right side of \cref{eq:0811} converges to~$\partial_x ( \kappa (\smoothing (u^\ell)) \partial_x u^\ell ) - \partial_x (H(u^\ell) )$ weakly in $L^2 \big((0, T) \times \T_L\big)$, and hence strongly in $L^2 (0, T; H^{-1}(\T_L) )$.
  Therefore $u^\ell$ is a weak solution of \cref{sep2302}.

  Now suppose there are two solutions $u^\ell$ and $v^\ell$ of \cref{sep2302} in $\mathcal{B}$.
  Both $u^\ell$ and $v^\ell$ are sufficiently regular to serve as test functions, so we can write
  \begin{equation*}
    \begin{aligned}
      \frac 12\frac{\dif}{\dif t} \| &u^\ell - v^\ell \|_{L^2}^2 + \int_{\T_L} \kappa (\smoothing(u^\ell ) ) |\partial_x (u^\ell - v^\ell )|^2\\
      & = - \int_{\T_L} \big[\kappa (\smoothing (u^\ell ) ) - \kappa (\smoothing (v^\ell ) )\big] (\partial_x v^\ell) \partial_x (u^\ell - v^\ell ) + \int_{\T_L} \big[H(u^\ell) - H(v^\ell ) \big] \partial_x (u^\ell - v^\ell ).
    \end{aligned}
  \end{equation*}
  Using \cref{sep2218,eq:Hderivbd}, we find
  \begin{equation*}
    \frac{\dif}{\dif t} \| u^\ell - v^\ell \|_{L^2}^2 + \kappa_0 \| u^\ell - v^\ell \|_{H^1} ^2 \le C  \big(\| \partial_x v^\ell \|_{L^\infty}^2 + 1\big) \| u^\ell - v^\ell \|_{L^2}^2.
  \end{equation*}
  Since $v^\ell \in L^2 (0, T; H^2) \subset L^2 (0, T; \mathcal{C}^{1, 1/2})$, Gr\"onwall's inequality implies that $u^\ell = v^\ell$.
\end{proof}

\subsubsection{Removing the regularization}

We now construct a weak solution $u$ to \cref{sep2210} by passing to the limit $\ell\to 0$.
\begin{lem}\label{lem:rmreg}
  For each $T > 0$, \cref{sep2210} admits a weak solution in
  \begin{equation*}
    \widetilde{\mathcal{B}}\coloneqq L^2 (0, T; H^1(\T_L)) \cap L^\infty ([0, T]\times\T_L).
  \end{equation*}
\end{lem}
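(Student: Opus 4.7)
The plan is to pass to the limit $\ell \to 0$ in the regularized solutions $u^\ell$ produced by \cref{lem:regularizedsoln}, using a compactness argument. To make this work, I first need $\ell$-\emph{uniform} a priori estimates, which are stronger than the $\ell$-dependent bounds of \cref{sep2221}.

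First, I would prove a uniform $L^\infty$ bound. The maximum principle applied to \cref{sep2302} gives $\|u^\ell(t,\anon)\|_{L^\infty(\T_L)} \leq \|u_0\|_{L^\infty(\T_L)}$, independent of $\ell$ (this is essentially the $n \to \infty$ limit of \cref{sep2214}). Next, I would prove a uniform $L^2(0,T; H^1)$ bound via an energy estimate: multiply \cref{sep2302} by $u^\ell$, integrate over $\T_L$, and integrate by parts to obtain
\begin{equation*}
  \frac{1}{2}\frac{\dif}{\dif t}\|u^\ell\|_{L^2(\T_L)}^2 + \int_{\T_L} \kappa\big(\smoothing(u^\ell)\big)|\partial_x u^\ell|^2 = \int_{\T_L} H(u^\ell)\partial_x u^\ell = 0,
\end{equation*}
where the flux term vanishes on the torus because $H(u^\ell)\partial_x u^\ell = \partial_x \mathcal{H}(u^\ell)$ for $\mathcal{H}' = H$. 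Combined with \cref{eq:alliptic} and integration in time, this yields $\|u^\ell\|_{L^2(0,T;H^1(\T_L))} \leq C(T,u_0)$ independent of $\ell$. Finally, rewriting \cref{sep2302} as $\partial_t u^\ell = \partial_x F^\ell$ with $F^\ell = \kappa(\smoothing(u^\ell))\partial_x u^\ell - H(u^\ell)$, and noting that $F^\ell$ is bounded in $L^2((0,T) \times \T_L)$ by the previous estimates together with \cref{eq:alliptic,eq:Hbd} and the $L^\infty$ bound, I get $\partial_t u^\ell$ bounded in $L^2(0,T;H^{-1}(\T_L))$ uniformly in $\ell$.

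With these bounds in hand, the Aubin--Lions lemma (with the compact embedding $H^1(\T_L) \Subset L^2(\T_L) \hookrightarrow H^{-1}(\T_L)$) provides a subsequence $u^{\ell_k} \to u$ strongly in $L^2((0,T) \times \T_L)$ and (after further extraction) almost everywhere, while $\partial_x u^{\ell_k} \rightharpoonup \partial_x u$ weakly in $L^2$ and $u^{\ell_k} \overset{*}{\rightharpoonup} u$ in $L^\infty$. The limit $u$ lies in $\widetilde{\mathcal{B}}$ by weak/weak-$*$ lower semicontinuity. I also need $\smoothing(u^{\ell_k}) \to u$ strongly in $L^2$; this follows from the triangle inequality
\begin{equation*}
  \|\smoothing(u^{\ell_k}) - u\|_{L^2} \leq \|\phi_{\ell_k}\|_{L^1}\|u^{\ell_k} - u\|_{L^2} + \|\phi_{\ell_k} * u - u\|_{L^2},
\end{equation*}
both terms of which tend to $0$. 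Extracting once more so that $\smoothing(u^{\ell_k}) \to u$ a.e., the continuity of $\kappa$ and boundedness give $\kappa(\smoothing(u^{\ell_k})) \to \kappa(u)$ in every $L^p((0,T)\times\T_L)$ with $p < \infty$ by dominated convergence, and similarly $H(u^{\ell_k}) \to H(u)$.

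Passing to the limit in the weak formulation of \cref{sep2302} is now straightforward for the $\partial_t$ and $H$ terms. The main (mild) obstacle is the nonlinear diffusive flux $\kappa(\smoothing(u^{\ell_k}))\partial_x u^{\ell_k}$: here I use that strong $L^2$ convergence of $\kappa(\smoothing(u^{\ell_k}))$ combined with weak $L^2$ convergence of $\partial_x u^{\ell_k}$ yields weak $L^1$ convergence of the product to $\kappa(u)\partial_x u$, which suffices to pass to the limit when tested against a smooth compactly supported test function on $(0,T) \times \T_L$. This identifies $u$ as a weak solution of \cref{sep2210} in $\widetilde{\mathcal{B}}$, completing the proof.
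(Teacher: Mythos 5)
Your proposal follows essentially the same route as the paper: a uniform $L^\infty$ bound from the maximum principle, an $\ell$-independent $L^2(0,T;H^1)$ energy estimate obtained by testing with $u^\ell$, a $L^2(0,T;H^{-1})$ bound on $\partial_t u^\ell$, Aubin--Lions compactness, and then passage to the limit in the weak formulation with the nonlinear diffusive flux handled by combining strong convergence of the coefficient with weak convergence of $\partial_x u^\ell$. The only cosmetic differences are that you use the compact embedding $H^1 \Subset L^2$ rather than $H^1 \Subset \mathcal{C}^\alpha$ and you invoke almost-everywhere convergence plus dominated convergence for $\kappa(\smoothing(u^\ell))$ and $H(u^\ell)$ where the paper instead uses the local Lipschitz estimates in \cref{eq:coeff-loc-Lip} and a three-term decomposition of the flux difference in \cref{sep2310}; both routes are correct and equivalent in substance.
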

\begin{proof}
  Fix $T > 0$.
  We first note that \cref{sep2214} is uniform in $n$, so
  \begin{equation}
    \label{eq:Linftyul}
    \|u^\ell\|_{L^\infty([0,T]\times\T_L)}\le \|u_0\|_{L^\infty(\T_L)}
  \end{equation}
  for each $\ell > 0$.
  We cannot pass to the limit $\ell\to\infty$ in the estimate \cref{sep2221} directly, because the right side depends on $\ell$.
  We therefore develop an $H^1$ estimate.
  Multiplying \cref{sep2302} by $u^{\ell}$ and integrating by parts over $\T_L$, uniform ellipticity \cref{eq:alliptic} yields
  \begin{equation*}
    \frac12\frac{\dif}{\dif t} \| u^{\ell} \|_{L^2 (\T_L)}^2 + \kappa_0 \| \partial_x u^{\ell} \|_{L^2(\T_L)} ^2 \le 0.
  \end{equation*}
  It follows that
  \begin{equation*}
    \|u^{\ell}\|_{L^\infty (0, T; L^2 (\T_L)) } + \| u^{\ell} \|_{L^2 (0, T; H^1(\T_L) ) } \le \kappa_0^{-1/2} \|u_0\|_{L^2(\T_L)}.
  \end{equation*}
  We conclude that $(u^\ell)_{\ell > 0}$ and $(\partial_t u^\ell)_{\ell > 0}$ are uniformly bounded in $\widetilde{\cal B}$ and~$L^2 (0, T; H^{-1}(\T_L))$, respectively.
  By the Banach--Alaoglu theorem and the Aubin--Lions lemma, as $\ell \to 0$, $u^\ell$ converges along a subsequence weak-$*$ in $L^2 (0, T; H^1(\T_L))$ and strongly in $L^2(0, T; \mathcal{C}^\alpha)$ for every $\alpha \in (0, 1/2)$.
  Again, replace $(u^\ell)_{\ell > 0}$ by this subsequence and let $u$ denote the limit.

  We now show that the limit $u$ is a weak solution to \cref{sep2210}. Going back to \cref{sep2302},
  multiplying this equation by a test function~$\varphi\in \mathcal{C}^\infty([0,T)\times\T_L)$ and integrating, we obtain
  \begin{equation}
    \label{sep2308}
    \begin{aligned}
      &- \int_{\T_L} \varphi(0,x) u_0 (0,x) \,\dif x -\int_{[0,T] \times \T_L} (\partial_t \varphi)u^\ell \,\dif x \, \dif t\\ &
      =-\int_{[0,T] \times \T_L} (\partial_x \varphi )\kappa(\smoothing (u^\ell)) \partial_x u^\ell \,\dif x \, \dif t+ \int_{[0,T] \times \T_L} (\partial_x \varphi )H(u^\ell)\,\dif x \, \dif t.
    \end{aligned}
  \end{equation}
  Combining the uniform bound \cref{eq:Linftyul} with the assumptions \cref{sep2218,eq:Hderivbd}, we see that
  \begin{equation}
    \label{eq:coeff-loc-Lip}
    \begin{aligned}
      |\kappa(\smoothing(u^\ell)) - \kappa(\smoothing(u))| &\leq C(\|u_0\|_{L^\infty})|\smoothing(u^\ell) - \smoothing(u)|,\\
      |H(u^\ell) - H(u)| &\le C(\|u_0\|_{L^\infty}) |u^\ell - u|.
    \end{aligned}
  \end{equation}
  In particular, strong convergence of $u^\ell \to u$ in $L^2(0, T; \mathcal{C}^\alpha )$ implies
  \begin{equation*}
    \begin{aligned}
      & \int_{[0,T] \times \T_L} (\partial_t \varphi) u^\ell \,\dif x\,\dif t
      \rightarrow \int_{[0,T] \times \T_L} (\partial_t \varphi) u \,\dif x\,\dif t,\\
      &\int_{[0,T] \times \T_L} (\partial_x \varphi)H(u^\ell) \,\dif x\,\dif t\rightarrow \int \partial_x \varphi H(u)\,\dif x\,\dif t
    \end{aligned}
  \end{equation*}
  as $\ell \to 0$.

  For the diffusive term in \cref{sep2308}, we write
  \begin{equation}
    \label{sep2310}
    \begin{aligned}
      \int_{[0,T] \times \T_L} (\partial_x \varphi )&\big[\kappa ( \smoothing (u^\ell)) \partial_x u^\ell -   \kappa ( u) \partial_x u \big]\,\dif x \, \dif t
      = \int (\partial_x \varphi) \big[ \kappa ( \smoothing (u^\ell))   -   \kappa ( \smoothing (u) )\big] \partial_x u^\ell \,\dif x \, \dif t
      \\&+ \int (\partial_x \varphi )\big[\kappa ( \smoothing (u))   -   \kappa (u)\big] \partial_x u^\ell\,\dif x \, \dif t
      +\int( \partial_x \varphi ) \kappa (u) \partial_x (u^\ell - u)\,\dif x \, \dif t.
    \end{aligned}
  \end{equation}
  We control the first term on the right side using \cref{eq:coeff-loc-Lip}:
  \begin{equation*}
    \begin{aligned}
      \int_{[0,T] \times \T_L} |\partial_x \varphi| \left |\kappa ( \smoothing (u^\ell))- \kappa ( \smoothing (u) )\right| |\partial_x u^\ell |\,\dif x \, \dif t
      & \le C \| \smoothing (u^\ell - u ) \|_{L_{t,x}^2} \|\partial_x u^\ell \|_{L_{t,x}^2}
      \\&{\le C \| u^\ell - u\|_{L_{t,x}^2} \|\partial_x u^\ell \|_{L_{t,x}^2}}\to 0
    \end{aligned}
  \end{equation*} 
  because $u^\ell$ is bounded in $L^2(0, T; H^1(\T_L))$.
  For the second term on the right side of~\cref{sep2310}, we have
  \begin{equation*}
    \int_{[0,T] \times \T_L} \left | (\partial_x \varphi )\big[\kappa ( \smoothing (u))   -   \kappa (u)\big] \partial_x u^\ell \right | \,\dif x \, \dif t
    \leq C \|\smoothing (u) - u \|_{L_{t,x}^2} \| \partial_x u^\ell \|_{L_{t,x}^2}\to 0,
  \end{equation*}
  because $\smoothing$ is an approximation of the identity.
  Finally, the third term in \cref{sep2310} converges to $0$ by the weak-$*$ convergence in $L^2 (0, T; H^1{(\T_L)})$.
  Thus $u$ is a weak solution of \cref{sep2210}.
\end{proof}

\subsubsection{Regularity of the solution}

Next, we employ a bootstrap argument to show that $u$ is in fact a classical solution to \cref{sep2210}.
We use a De Giorgi-type result, which follows from \cite[Theorem~6.28]{L96}.
To state it, we recall the parabolic cylinders $Q_r$ introduced in \cref{eq:parabolic-cylinder}.
\begin{prop}
  \label{prop-sep2302}
  Fix $(t_0, x_0) \in \R \times \R^d$, $r > 0$, and measurable bounded functions $A$ and~$f$ on $Q_{2r}(t_0, x_0)$.
  Suppose $\lambda \leq A \leq \Lambda$ for some $0< \lambda \leq \Lambda$.
  Then there exist~$\alpha\in (0, 1)$, that depends on $\lambda$ and $\Lambda$, and $C  <+\infty$
  that depends, in addition, on $r$,  such that any weak solution $u$ of the parabolic equation
  \begin{equation*}
    \partial_t u = \partial_x(A\partial_xu) + \partial_x f \quad \text{in } Q_{2r}(t_0,x_0)
  \end{equation*}
  satisfies
  \begin{equation*}
    \|u\|_{\mathcal{C}^{\alpha}(Q_{r}(t_0, x_0))} \leq C \left(\|u\|_{L^\infty(Q_{2r}(t_0, x_0))} + \|f\|_{L^\infty(Q_{2r}(t_0, x_0))}\right).
  \end{equation*}
\end{prop}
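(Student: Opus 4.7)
The proposition is essentially a scaling-invariant restatement of the De Giorgi--Nash--Moser regularity theorem for divergence-form parabolic equations with bounded measurable coefficients and an $L^\infty$ divergence-form source term, as given in \cite[Theorem~6.28]{L96}. My plan is therefore to reduce to the standard unit-scale setup by translation and parabolic rescaling, apply the cited result to obtain a universal Hölder estimate, and then undo the scaling.

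Concretely, I would first translate $(t_0, x_0)$ to the origin, which leaves every hypothesis and conclusion invariant. Then I would set $v(t, x) \coloneqq u(r^2 t, r x)$, so that $v$ is defined on $Q_2 = Q_2(0, 0)$ and satisfies
\begin{equation*}
  \partial_t v = \partial_x(\widetilde{A}\,\partial_x v) + \partial_x \widetilde{f} \quad \text{in } Q_2,
\end{equation*}
with $\widetilde{A}(t, x) \coloneqq A(r^2 t, r x)$ and $\widetilde{f}(t, x) \coloneqq r\, f(r^2 t, r x)$. The ellipticity bounds $\lambda \le \widetilde{A} \le \Lambda$ are preserved, and $\widetilde{f}$ is bounded with $\|\widetilde{f}\|_{L^\infty(Q_2)} = r \|f\|_{L^\infty(Q_{2r})}$.

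Next, I would invoke \cite[Theorem~6.28]{L96} to obtain an exponent $\alpha \in (0, 1)$, depending only on $\lambda$ and $\Lambda$, together with a universal constant $C_0$ such that
\begin{equation*}
  \|v\|_{\mathcal{C}^\alpha(Q_1)} \le C_0 \bigl(\|v\|_{L^\infty(Q_2)} + \|\widetilde{f}\|_{L^\infty(Q_2)}\bigr).
\end{equation*}
Unfolding the change of variables, I would read off the claimed bound on $\|u\|_{\mathcal{C}^\alpha(Q_r(t_0, x_0))}$ by tracking the powers of $r$ picked up when converting Hölder seminorms and $L^\infty$ norms between $v$ and $u$; all of these factors are absorbed into a single constant $C = C(r, \lambda, \Lambda)$.

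The only genuine verification needed is that the form of the equation matches the hypotheses of the cited theorem: namely that $A$ is a bounded measurable uniformly elliptic coefficient and that $\partial_x f$ lies in the correct divergence-form source class required by Lieberman's formulation. This is essentially a bookkeeping check on the statement of \cite[Theorem~6.28]{L96}, and I expect it to be the only subtle point in an otherwise routine rescaling argument.
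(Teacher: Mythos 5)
Your proposal is correct and takes essentially the same approach as the paper, which simply cites \cite[Theorem~6.28]{L96} and absorbs the $r$-dependence into the constant. The translation/parabolic-rescaling bookkeeping you spell out is implicit in the paper's statement that $C$ depends on $r$ in addition to $\lambda,\Lambda$.
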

With this, we can show that weak solutions of \cref{sep2210} are classical.
\begin{lem}
  \label{lem:weakisclassical}
  There exists $\alpha \in (0, 1)$ such that the following holds.
  For each $T > 0$, every weak solution $u$ of \cref{sep2210} in $\tilde{\mathcal{B}}$ is $\mathcal{C}_{\mathrm{loc}}^{2,\alpha}$ in space and $\mathcal{C}_{\mathrm{loc}}^{1,\alpha/2}$ in time, and thus classical.
  If $u_0 \in \mathcal{C}(\T_L)$, then $u$ is continuous.
  Moreover, there exists $C_Q \colon \R_+ \to \R_+$ such that for every parabolic cylinder ${Q_{2r}\subset (0, T) \times \T_L}$, we have
  \begin{equation}
    \label{eq:quantitative-weakisclassical}
    \|u\|_{\mathcal{C}^{\alpha}(Q_r)} \le C_Q(r) \|u\|_{L^\infty(Q_{2r})} \left(1 + \|u\|_{L^\infty(Q_{2r})}\right).
  \end{equation}
\end{lem}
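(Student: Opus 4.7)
The plan is to first obtain the quantitative Hölder bound \cref{eq:quantitative-weakisclassical} by a single application of \cref{prop-sep2302}, and then bootstrap regularity by standard parabolic Schauder theory. To set up the De~Giorgi step, I rewrite \cref{sep2210} as
\begin{equation*}
  \partial_t u = \partial_x\bigl[A\,\partial_x u + g(u)\bigr], \qquad A \coloneqq \kappa(u), \quad g(v) \coloneqq H(0) - H(v),
\end{equation*}
using that $\partial_x$ annihilates the constant $H(0)$. The coefficient $A$ is uniformly elliptic by \cref{eq:alliptic}, and since $g(0) = 0$, \cref{eq:Hderivbd} gives
\begin{equation*}
  |g(v)| \leq \int_0^{|v|} |H'(s)|\,\dif s \leq C_H\,|v|\bigl(1+|v|\bigr)^{q/2} \leq C_H\,|v|\bigl(1+|v|\bigr)
\end{equation*}
because $q/2 \leq 1$. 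Applying \cref{prop-sep2302} on each cylinder $Q_{2r}$ with source $f = g(u)$ yields \cref{eq:quantitative-weakisclassical}; the parabolic $\mathcal{C}^\alpha$ norm automatically includes the required $\alpha/2$-Hölder regularity in time.

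With $u \in \mathcal{C}^\alpha_{\mathrm{loc}}$ in $(t,x)$ in hand, I bootstrap. \cref{assu:kappa,assu:H} ensure that $\kappa(u)$ and $H(u)$ inherit Hölder regularity in $(t,x)$. Divergence-form parabolic Schauder estimates applied to $\partial_t u - \partial_x(\kappa(u)\partial_x u) = -\partial_x H(u)$ then give $u \in \mathcal{C}^{1,\gamma}_{\mathrm{loc}}$ parabolically, for some $\gamma > 0$. With $\partial_x u$ now Hölder, I rewrite the equation in nondivergence form
\begin{equation*}
  \partial_t u - \kappa(u)\,\partial_x^2 u = \kappa'(u)(\partial_x u)^2 - H'(u)\,\partial_x u,
\end{equation*}
whose coefficients and right-hand side are Hölder by \cref{assu:kappa,assu:H} and the regularity just obtained. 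Interior classical Schauder estimates then upgrade $u$ to $\mathcal{C}^{2,\alpha}_{\mathrm{loc}}$ in space and $\mathcal{C}^{1,\alpha/2}_{\mathrm{loc}}$ in time after possibly shrinking $\alpha$, making $u$ a classical solution.

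For continuity on $\{0\}\times\T_L$ when $u_0 \in \mathcal{C}(\T_L)$, I would approximate $u_0$ uniformly by smooth $u_0^\delta$, solve \cref{sep2210} with each $u_0^\delta$ (obtaining smooth solutions via the interior regularity just established together with parabolic theory for smooth initial data), and pass to the limit using an $L^1$-contraction argument together with the maximum principle $L^\infty$-bound. The main technical subtlety is producing the multiplicative structure $\|u\|_{L^\infty}(1+\|u\|_{L^\infty})$ in \cref{eq:quantitative-weakisclassical}, rather than the cruder $(1+\|u\|_{L^\infty})^2$ one would get from the bound $|H(v)|\le\lambda\kappa_0 v^2 + c_2$ in \cref{eq:Hbd}: absorbing $H(0)$ into $g$ above is essential, since the additive constant $c_2$ otherwise prevents the right-hand side of \cref{eq:quantitative-weakisclassical} from vanishing as $\|u\|_{L^\infty}\to 0$.
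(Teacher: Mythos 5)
Your argument follows the paper's route almost exactly: apply the De~Giorgi estimate \cref{prop-sep2302} after normalizing the flux $f = -H(u)$ to kill the additive constant $c_2$, then run a two-step Schauder bootstrap. The only cosmetic differences are the normalization point (you subtract $H(0)$ so that $g(0)=0$; the paper subtracts $f(t_0,x_0)=-H(u(t_0,x_0))$ at the cylinder's center) and that you pass to nondivergence form for the second Schauder step, whereas the paper re-applies \cite[Theorem~6.48]{L96} in divergence form via a cutoff. Both variants yield \cref{eq:quantitative-weakisclassical} and $\mathcal{C}^{2,\alpha}$ interior regularity, and your remark about why the multiplicative structure $\|u\|_{L^\infty}(1+\|u\|_{L^\infty})$ would otherwise fail is the correct one.

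One caution on your sketch of continuity up to $\{t=0\}$: invoking an $L^1$-contraction estimate here is in danger of being circular, because the paper's proof of that estimate (\cref{prop:prop0916}) relies on the spacetime continuity of $u$ established in the present lemma---specifically on taking $t_1\to 0$ at the end of that proof. If you want to run an approximation-by-smooth-data argument, you would need to establish the stability estimate without invoking continuity at $t=0$ (for instance, prove the contraction on $[t_1, t_2]\times\T_L$ with $t_1>0$ and separately control $\int|u(t_1)-u_0|$ as $t_1\to 0$ directly from the $L^2(0,T;H^1)$ structure). The paper avoids this by appealing to ``standard boundary regularity,'' i.e.\ the boundary version of the De~Giorgi/Nash continuity estimate up to the initial time slice, which applies directly to the divergence-form equation with $L^\infty$ coefficients and continuous initial data and requires no approximation.
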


\begin{proof}
  We will apply \cref{prop-sep2302} to \cref{sep2210} with $A = \kappa(u)$ and $f = -H(u)$.
  Let $(t_0, x_0)$ denote the ``center'' of the parabolic cylinders $Q_r$ and $Q_{2r}$.
  We are free to replace $f$ by~$f - f(t_0, x_0)$.
  Then \cref{eq:Hbd} and \cref{eq:Hderivbd} yield
  \begin{equation*}
    \|f\|_{L^\infty(Q_{2r})} \leq \|u\|_{L^\infty(Q_{2r})} \left(1 + \|u\|_{L^\infty(Q_{2r})}\right).
  \end{equation*}
  By a trivial modification of \cref{prop-sep2302} to differently-sized cylinders,
  \begin{equation*}
    \|u\|_{\mathcal{C}^{\alpha}(Q_{5r/3})} \leq C(r) \|u\|_{L^\infty(Q_{2r})} \left(1 + \|u\|_{L^\infty(Q_{2r})}\right),
  \end{equation*}
  which implies \cref{eq:quantitative-weakisclassical}.

  By \cref{sep2218} and \cref{eq:Hderivbd}, $H(u)$ and $\kappa(u)$ are also $\alpha$-H\"older.
  Next, let $\theta$ be a smooth cutoff such that $\theta|_{Q_{4r/3}} \equiv 1$ and $\theta|_{Q_{5r/3}^c} \equiv 0$.
  Applying Theorem~6.48 in \cite{L96} to $\tilde{u} \coloneqq \theta u$, we can check that~$u \in \mathcal{C}^{1,\alpha}(Q_{4r/3})$.
  Following \cite{L96}, we here use a parabolic notion of regularity that imparts half-regularity in time.
  That is, $u \in \mathcal{C}^{1,\alpha}(Q_{4r/3})$ means $u$ is $\mathcal{C}^{1,\alpha}$ in space and~$\mathcal{C}^{(1 + \alpha)/2}$ in time.
  Again, it follows that $H(u)$ and $\kappa(u)$ are $\mathcal{C}^{1,\alpha}$.
  Applying \cite[Theorem~6.48]{L96} with a cutoff between $Q_r$ and $Q_{4r/3}$, we see that $u$ is in fact $\mathcal{C}^{2, \alpha}$ on $Q_r$.
  That is, $u$ is $\mathcal{C}^{2,\alpha}$ in space and $\mathcal{C}^{1,\alpha/2}$ in time.
  In particular, $u$ is a classical solution of \cref{sep2210}.
  Standard boundary regularity now implies that $u$ is continuous if $u_0 \in \mathcal{C}(\T_L)$.
\end{proof}

\subsubsection{Uniqueness of the solution}

To complete the proof of \cref{prop-sep2304}, we must prove uniqueness for the periodized problem \cref{sep2210}.
To do so, we use $L^1$-contraction of the conservation law.
We recall the space~$\tilde{\mathcal{B}} = L_t^2H_x^1 \cap L_{t,x}^\infty$ from \cref{lem:rmreg}.
\begin{prop}
  \label{prop:prop0916}
  For each $T > 0$, let $u, v \in \tilde{\mathcal{B}}$ be weak solutions of \cref{eq:uPDE-unforced} on $\T_L$.
  Then
  \begin{equation*}
    \|u(t,\anon) - v(t,\anon)\|_{L^1 (\T_L)} \leq \|u(0, \anon) - v(0,\anon)\|_{L^1 (\T_L ) } \quad \text{for all } t \in [0, T].
  \end{equation*}
\end{prop}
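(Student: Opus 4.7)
The plan is to adapt the classical $L^{1}$-contraction argument for uniformly parabolic conservation laws, paying careful attention to the fact that $\kappa$ is only H\"older (not Lipschitz). I would first introduce the antiderivative $A(r) \coloneqq \int_{0}^{r}\kappa(s)\,\dif s$, which is bi-Lipschitz by \cref{assu:kappa}(i), and rewrite the PDE as $\partial_{t}u = \partial_{x}^{2}A(u) - \partial_{x}H(u)$. Subtracting the equations for $u$ and $v$ then gives
\[
  \partial_{t}w = \partial_{x}^{2}\bigl(A(u)-A(v)\bigr) - \partial_{x}\bigl(H(u)-H(v)\bigr)
\]
for $w \coloneqq u-v$. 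By \cref{lem:weakisclassical}, both $u$ and $v$ are classical on $(0,T)\times\T_{L}$, so this identity holds pointwise in the interior, and all the manipulations below are justified there; continuity down to $t=0$ follows from $u,v\in C([0,T];L^{2}(\T_{L}))$, a standard consequence of $u,v\in\tilde{\mathcal{B}}$ with $\partial_{t}u,\partial_{t}v\in L^{2}(0,T;H^{-1})$.

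Next, I would pick a convex $\mathcal{C}^{2}$ approximation $\eta_{\eps}$ of $|\cdot|$ with $\eta_{\eps}''$ a standard bump rescaled to support $[-\eps,\eps]$ and $|\eta_{\eps}(r)-|r||\le C\eps$. Testing the difference equation against $\eta_{\eps}'(w)$ and integrating by parts over $\T_{L}$ (boundary terms vanish by periodicity) yields
\[
  \frac{\dif}{\dif t}\int_{\T_{L}}\eta_{\eps}(w)\,\dif x = -\int_{\T_{L}}\eta_{\eps}''(w)\,\partial_{x}w\,\partial_{x}\bigl(A(u)-A(v)\bigr)\,\dif x + \int_{\T_{L}}\eta_{\eps}''(w)\,\partial_{x}w\,\bigl(H(u)-H(v)\bigr)\,\dif x.
\]
To handle the diffusive term, decompose $\partial_{x}(A(u)-A(v)) = \kappa(u)\partial_{x}w + (\kappa(u)-\kappa(v))\partial_{x}v$. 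The first piece produces a coercive contribution bounded above by $-\kappa_{0}\int\eta_{\eps}''(w)(\partial_{x}w)^{2}\,\dif x$. The cross term I would handle via Young's inequality together with the H\"older bound $|\kappa(u)-\kappa(v)|\le C|w|^{\alpha_{\kappa}}$ from \cref{assu:kappa}(ii), which produces an error of size
\[
  C\eps^{2\alpha_{\kappa}-1}\|\partial_{x}v(t)\|_{L^{2}(\T_{L})}^{2},
\]
since $\eta_{\eps}''(w)|w|^{2\alpha_{\kappa}}\le C\eps^{2\alpha_{\kappa}-1}$ pointwise. The Hamiltonian term is treated similarly: local Lipschitz continuity of $H$ on the $L^{\infty}$ range of $u,v$ gives $|H(u)-H(v)|\le C|w|$, and Cauchy-Schwarz together with the bound $\int\eta_{\eps}''(w)w^{2}\,\dif x\le C\eps L$ absorbs this term into the coercive piece up to an error of order $O(\eps)$.

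Integrating in time on $[0,t]$, dropping the non-positive coercive remainder, and letting $\eps\to 0$ (via dominated convergence, using $|\eta_{\eps}(w)|\le |w|+C\eps$ and $w\in L^{\infty}_{t,x}$) yields the claimed contraction $\|w(t)\|_{L^{1}(\T_{L})}\le\|w(0)\|_{L^{1}(\T_{L})}$; the time integral of the error is controlled because $\|\partial_{x}v\|_{L^{2}}^{2}\in L^{1}(0,T)$. The main obstacle is precisely the cross term $(\kappa(u)-\kappa(v))\partial_{x}v$ arising from the non-constant diffusivity: standard $L^{1}$-contraction arguments implicitly use Lipschitz regularity of $\kappa$, while here the threshold condition $\alpha_{\kappa}>1/2$ in \cref{assu:kappa}(ii) is exactly what makes the resulting error $\eps^{2\alpha_{\kappa}-1}$ vanish in the limit.
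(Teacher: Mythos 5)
Your argument is correct, but it takes a genuinely different route from the paper's. The paper tests the difference equation against $F_\eps'(w)$ with $w\coloneqq\mathcal{K}(u)-\mathcal{K}(v)$, where $\mathcal{K}(s)=\int_0^s\kappa$, rather than against $\eta_\eps'(u-v)$. Because $\partial_x^2\big(\mathcal{K}(u)-\mathcal{K}(v)\big)=\partial_x^2 w$, this choice of argument \emph{absorbs} the non-constant diffusivity entirely: the diffusive term becomes the single coercive integral $-\int F_\eps''(w)(\partial_x w)^2$, and no cross term $(\kappa(u)-\kappa(v))\partial_x v$ ever appears. Consequently the paper's proof never invokes the H\"older regularity of $\kappa$ in this lemma---only the two-sided bound $\kappa_0\le\kappa\le\kappa_0^{-1}$. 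The price is a mild mismatch: the spatial estimate is phrased in terms of $F_\eps'(\mathcal{K}(u)-\mathcal{K}(v))$ while the time derivative naturally pairs with $F_\eps'(u-v)$, so the paper runs two separate $\eps\to 0$ limits and reconciles them by observing that $\sgn(\mathcal{K}(u)-\mathcal{K}(v))=\sgn(u-v)$ since $\mathcal{K}$ is strictly increasing. Your version keeps a single test function $\eta_\eps'(u-v)$, which streamlines the time integration, but the cross term $(\kappa(u)-\kappa(v))\partial_x v$ then survives and must be killed by Young's inequality together with the H\"older bound $|\kappa(u)-\kappa(v)|\le C|u-v|^{\alpha_\kappa}$, yielding an $O(\eps^{2\alpha_\kappa-1}\|\partial_x v\|_{L^2}^2)$ error that vanishes precisely because \cref{assu:kappa} imposes $\alpha_\kappa>1/2$. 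Both proofs close; yours relies on an extra hypothesis (which is nonetheless available, and is indeed used in the paper for the ordering argument in \cref{prop:ordering}), while the paper's trick of testing against the transformed variable $\mathcal{K}(u)-\mathcal{K}(v)$ gives $L^1$-contraction under weaker assumptions on $\kappa$. Your observation that continuity of $t\mapsto\|u(t)-v(t)\|_{L^1}$ down to $t=0$ follows from $u,v\in C([0,T];L^2(\T_L))$ is a clean way to justify the boundary-limit step.
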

\begin{proof}
  We first define the increasing function
  \begin{equation}
    \label{sep2410}
    \mathcal{K}(s) = \int_0^s \kappa(r) \,\dif r.
  \end{equation}
  Then, we can write \cref{eq:uPDE-unforced} as
  \begin{equation*}
    \partial_t u = \partial_x ^2 [\mathcal{K}(u)] - \partial_x [H(u)],
  \end{equation*}
  which gives
  \begin{equation}
    \label{eq:eq09162}
    \partial_t (u-v) = \partial_x ^2 [\mathcal{K}(u)- \mathcal{K}(v)] - \partial_x [H(u) - H(v)].
  \end{equation}
  
  Let $F(x) \coloneqq |x|$.
  We introduce a family of smooth convex approximations $\{F_\eps\}_{\eps>0}\in \mathcal{C}^2 (\R)$ such that
  \begin{equation}\label{sep2412}
    \lim_{\eps\rightarrow 0} \| F_\eps - F\|_{\mathcal{C}_b (\R)} = 0
  \end{equation}
  and
  \begin{equation}\label{sep2414}
    F_\eps(x)=F(x),~~\hbox{for $|x|\ge 1$.}
  \end{equation}
  As in Lemma~3.4 of \cite{DGR21}, we assume that there exists a constant $C>0$ such that for all~$\eps\in (0,1]$, the approximation $F_\eps$ satisfies the following properties for all $x \in \R$:
  \begin{equation}\label{sep2320}
    \begin{aligned}
      F_\eps (x) &\le C (|x| + \eps),                              \\
      |x F_\eps'(x) | &\le C F_\eps (x),                        \\
      |F_\eps'(x) |&\le C,                                       \\
      |x| F_\eps''(x) &\le C \one_{[-\eps, \eps]}(x).
    \end{aligned}
  \end{equation}
  Finally, we assume that $F_\eps'(w)\to \sgn(w)$ pointwise as~$\eps\to 0$, under the convention~$\sgn(0) = 0$.
  It not difficult to verify the existence of such a family $\{F_\eps\}_{\eps>0}$.

  Define $w \coloneqq \mathcal{K}(u) - \mathcal{K}(v)$ and
  \begin{equation*}
    A \coloneqq \max\big\{\|u\|_\infty, \|v\|_\infty\big\} < \infty.
  \end{equation*}
  We multiply  \cref{eq:eq09162} by $F_\eps'(w)$ and integrate in space:
  \begin{equation}
    \label{eq:eq09163}
    \int_{\T_L} \partial_t (u-v) F_\eps'(w) = - \int_{\T_L} F_\eps''(w) (\partial_x w)^2 + \int_{\T_L} F_\eps''(w) (\partial_x w) [H(u) - H(v)].
  \end{equation}
  To bound the second term in the right side above, we note that \cref{eq:Hderivbd} yields
  \begin{equation}
    \label{sep2316}
    |H(u) - H(v)| \le C(A)|u-v|.
  \end{equation}
  Here and below, we denote by $C(A)$ various constants that depend on $A$ and may vary from line to line.
  By \cref{eq:alliptic},
  \begin{equation*}
    |w| = \left|\int_{v}^{u} \kappa(s) \,\dif s\right| \geq \kappa_0 |u - v|.
  \end{equation*}
  By \cref{sep2316}, we find
  \begin{equation*}
    |H(u) - H(v)| \le C(A)|w|.
  \end{equation*}
  Now the second term on the right side of \cref{eq:eq09163} can be estimated using the last assumption in \cref{sep2320}:
  \begin{align*}
    \int_{\T_L} F_\eps'' (w) (\partial_x w) [H(u) - H(v)] &\le \frac{1}{2} \int_{\T_L} F_\eps'' (w)  (\partial_x w)^2 + C(A) \int_{\T_L}  F_\eps''(w) w^2\notag\\
                                                          &\le \frac{1}{2} \int_{\T_L} F_\eps'' (w) (\partial_x w)^2 + C(A) \int_{\T_L} |w| \one_{|w|\le\eps}\notag\\
                                                          &\le \frac{1}{2} \int_{\T_L} F_\eps'' (w) (\partial_x w)^2 + C(A,L)\eps.
  \end{align*}
  Using this estimate in \cref{eq:eq09163}, we find
  \begin{equation}
    \label{sep2321}
    \int_{\T_L} \partial_t (u-v) F_\eps'(w) + \frac{1}{2} \int_{\T_L} F_\eps''(w) (\partial_x w)^2 \le C(A,L)\eps.
  \end{equation}
  The function $\mathcal{K}(u)$ is strictly increasing, so $\sgn(w)=\sgn(u-v)$ and $F_\eps'(w) \to \sgn(u-v)$ as~$\eps\to 0$.
  Also, \cref{lem:weakisclassical} implies that $\partial_t(u-v)$ is bounded when $t > 0$.
  Taking $\eps \to 0$ in~\cref{sep2321}, the bounded convergence theorem implies
  \begin{equation}
    \label{sep2322}
    \int_{\T_L} \partial_t (u-v) \sgn(u-v) \le 0.
  \end{equation}

  Now fix $0 < t_1 \leq t_2 \leq T$.
  By \cref{lem:weakisclassical}, $u$ is continuous in spacetime.
  Noting that
  \begin{equation}
    \frac{\dif}{\dif t}\int_{\T_L} F_\eps(u-v) = \int_{\T_L} F_\eps'(u-v)\partial_t(u-v),
  \end{equation}
  we therefore have
  \begin{equation}
    \label{sep2604}
    \int_{\T_L} F_\eps(u-v)\, \Big|_{t = t_1}^{t = t_2} = \int_{[t_1, t_2] \times \T_L} F_\eps'(u-v)\partial_t(u-v).
  \end{equation}
  Again, \cref{lem:weakisclassical} and the bounded convergence theorem allow us to take $\eps \to 0$ in \cref{sep2604}:
  \begin{equation*}
    \int_{\T_L} |u(t_2, \anon)-v(t_2,\anon)| = \int_{\T_L} |u(t_1,\anon)-v(t_1,\anon)| + \int_{[t_1,t_2] \times \T_L} \partial_t(u-v)\sgn(u-v).\label{sep2606}
  \end{equation*}
  Taking $t_1 \to 0$, \cref{sep2322} and the spacetime continuity of $u$ imply the lemma.
\end{proof}
\noindent
This completes the proof of \cref{prop-sep2304}.

\subsection{The solution on the whole line}

We now send the size of the torus in the periodized problem to infinity.
Given $L>0$, let~$\chi^{[L]} \in \mathcal{C}_c^\infty(\R)$ be a bump function such that $0 \leq \chi^{[L]} \leq 1$ and
\begin{equation*}
  \begin{aligned}
    \chi^{[L]}(x) &=1 \quad \text{when } |x| \leq L/2,\\
    \chi^{[L]}(x) &=0 \quad \text{when } |x| \geq (L+1)/2.
  \end{aligned}
\end{equation*}
We assume that the family $\{\chi^{[L]}\}_{L \geq 1}(x)$ is uniformly smooth, meaning there exist constants~$C_k \in \R_+$ for $k\in\N$ such that
\begin{equation*}
  \begin{aligned}
    \|\chi^{[L]}\|_{\mathcal{C}^k(\R)} \le C_k \quad \text{for each } k \in \N
  \end{aligned}
\end{equation*}
and
\begin{equation*}
  |\partial_x\chi^{[L]}|^2 \le C_1 \chi^{[L]}.
\end{equation*}
Given $v \colon \R \to \R$, we define its $L$-periodization $v^{[L]}$ by
\begin{equation*}
  v^{[L]} (x) \coloneqq \sum_{j \in \Z} \chi^{[L]}(x - jL)v(x - jL).
\end{equation*}
In particular, if $v \in\mathcal{C}_{\p_\ell}$ then $v^{[L]} \in \mathcal{C}(\T_L)$.

Now fix $\ell \in (0, 1)$ and an initial condition $u_0\in\mathcal{C}_{\p_\ell}$.
Let $u^{[L]}$ denote the solution of \cref{sep2210} with initial condition $u_0^{[L]}$, the $L$-periodization of $u_0$.
Our first step is to prove a uniform bound on the periodized solutions in weighted spaces.
The following proposition is similar to \cite[Prop.~2.10]{DGR21}.
\begin{prop}
  \label{prop-sep2306}
  Fix $L \geq 1$, $0 < \ell < \ell' < 1$, and $A \in \R_+$ and suppose that $\|u_0^{{[L]}}\|_{\mathcal{C}_{\p_\ell}} \le A$.
  Then there exists $C \in \R_+$ depending only on $\ell$, $\ell'$, and $A$ such that
  \begin{equation}
    \label{sep2704}
    \sup_{t \in [0,1]} \| u^{[L]} (t, \anon ) \|_{\mathcal{C}_{\p_{{\ell'}}}} \le C.
  \end{equation}
\end{prop}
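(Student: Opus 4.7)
The strategy parallels \cite[Prop.~2.10]{DGR21}: construct an explicit $L$-periodic supersolution $\Phi$ of the unforced PDE on $\T_L$ that dominates $|u_0^{[L]}|$ pointwise, enjoys the desired growth $\Phi(t,x) \le C\langle x\rangle^{\ell'}$ uniformly in $L$, and then conclude by a comparison principle. The challenge is that the nonlinear diffusivity $\kappa(u)$ introduces new terms absent from \cite{DGR21}.

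First I would periodize the weight: let $\rho_L$ be a smooth $L$-periodic function on $\T_L$ that dominates $\rho(x) \coloneqq \langle x\rangle^{\ell'}$ on $[-L/2, L/2]$ and agrees with $\rho$ on $[-L/2 + 1, L/2 - 1]$, with $L$-uniform pointwise bounds on $\rho_L, \rho_L', \rho_L''$ that match those of $\rho$. Then I would take
\begin{equation*}
  \Phi(t, x) = (A + f(t))\rho_L(x) + g(t),
\end{equation*}
with $f(0) = g(0) = 0$, so that $\Phi(0, \cdot) = A\rho_L \ge A\langle \cdot\rangle^{\ell'} \ge A\langle \cdot\rangle^\ell \ge |u_0^{[L]}|$ on $\T_L$. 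Expanding $\partial_x[\kappa(\Phi)\partial_x\Phi - H(\Phi)]$ using $\kappa_0 \le \kappa \le \kappa_0^{-1}$ and $|\kappa'(u)| \le C(1+|u|)$ from \assuref{kappa} together with $|H'(u)| \le C(1+|u|)^{q/2}$ from \assuref{H}, each resulting term is pointwise bounded by $P(f, g)\,\langle x\rangle^\beta$ with $P$ polynomial and $\beta < \ell'$. Indeed, the worst exponents are $3\ell' - 2$ from $\kappa'(\Phi)(\partial_x\Phi)^2$ and $\ell'(1 + q/2) - 1$ from $H'(\Phi)\partial_x\Phi$, both strictly less than $\ell'$ since $\ell' < 1$ and $q \le 2$. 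Splitting the supersolution inequality into $\rho_L$-proportional and $\rho_L$-independent pieces then yields an ODE system for $(f, g)$ that admits a finite solution on $[0, 1]$, with final values depending only on $A, \ell, \ell'$ and the equation's parameters.

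To transfer the bound from $\Phi$ to $u^{[L]}$, I would invoke the comparison principle on $\T_L$. Since $u^{[L]}$ is classical (\cref{lem:weakisclassical}) and bounded in $L^\infty(\T_L)$ (by the maximum-principle estimate \cref{sep2214}, inherited by the weak solution built in \cref{lem:rmreg}), and $\Phi$ is classical by construction, the difference $w \coloneqq u^{[L]} - \Phi$ satisfies a linear parabolic inequality with bounded coefficients. Either the standard maximum principle or an adaptation of the $L^1$-contraction argument in \cref{prop:prop0916} gives $w \le 0$; symmetrically $u^{[L]} \ge -\Phi$. Since $\rho_L(x) \le C\langle x\rangle^{\ell'}$ on $[-L/2, L/2]$ and $u^{[L]}$ is $L$-periodic on $\R$, \cref{sep2704} then follows uniformly in $L$.

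The main obstacle is the term $\kappa'(\Phi)(\partial_x\Phi)^2$, which is absent from the Burgers setting of \cite{DGR21}: it contributes a cubic $(A + f)^3$ term to the ODE for $f$ and, applied naively, triggers Riccati-type blow-up at time of order $A^{-2}$, potentially well below $1$ when $A$ is large. The resolution exploits the fact that the accompanying spatial factor $(\rho_L')^2 / \rho_L \sim \langle x\rangle^{2\ell' - 2}$ vanishes at infinity and attains its (bounded) maximum at $x = 0$, together with the freedom to divert the $\rho_L$-independent portion of the nonlinear flux into $g$. A careful ansatz refinement—possibly combined with time-splitting across dyadic subintervals of $[0,1]$ to keep the Riccati coefficient under control—should allow us to close the estimate, in direct analogy with \cite[Prop.~2.10]{DGR21}.
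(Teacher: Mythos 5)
The approach you sketch is genuinely different from the paper's, and it has a gap that your own remedies do not close. First, a factual correction: \cite[Prop.~2.10]{DGR21} (and the paper's proof here) does not construct an independent supersolution and compare; instead it defines a time-dependent weight
\begin{equation*}
  a(t,x) = K^{-1}\bigl(\langle x\rangle^2 + K^{2/(1-\ell')}\bigr)^{-(\ell_1 + \eps t)/2}, \quad \ell_1 = \tfrac{\ell+\ell'}{2}, \ \eps = \tfrac{\ell'-\ell}{2},
\end{equation*}
and applies a maximum-principle argument directly to $z = a\,u^{[L]}$, showing $\|z\|_\infty \le 2$ by contradiction at a first touching time. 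Two features of that weight are essential and are both missing from your ansatz: (i) the growth exponent $\ell_1 + \eps t$ strictly increases in $t$, so that $\partial_t(\log a) = -\tfrac{\eps}{2}\log(\langle x\rangle^2 + K^{2/(1-\ell')}) \le -\tfrac{\eps}{1-\ell'}\log K$ supplies a dissipative term that can be made arbitrarily large by choosing $K$; and (ii) the shift $K^{2/(1-\ell')}$ inside the parenthesis guarantees the $K$-uniform bounds $|\partial_x\log a|\le 1$ and $|\partial_x(\log a)/a|\le 1$, so that at the maximum of $z$ (where $\partial_x z = 0$) the problematic quadratic flux term $\kappa'(u)a^{-1}(\partial_x z - z\,\partial_x\log a)^2$ reduces to $\kappa'(u)a^{-1}(\partial_x\log a)^2 z^2$ and is bounded by a $K$-independent constant.

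Your ansatz $\Phi = (A + f(t))\rho_L(x) + g(t)$ has a fixed spatial growth exponent $\ell'$ and no large multiplicative/additive parameter playing the role of $K$. As you correctly observe, $\kappa'(\Phi)(\partial_x\Phi)^2$ contributes, after factoring out $\rho_L$, a coefficient of order $(A+f)^3\langle x\rangle^{2\ell'-2}$, so the $f$-ODE must satisfy $f' \gtrsim (A+f)^3$ near $x = 0$ whenever the $\langle x\rangle^{3\ell'-2}$ piece cannot be absorbed into $g$ (which is exactly when $\ell' > 2/3$, since then $\langle x\rangle^{3\ell'-2}$ is unbounded). That Riccati ODE blows up at time $\sim A^{-2}$, which is far less than $1$ when $A$ is large. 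Dyadic time-splitting does not rescue this: if each step doubles the $L^\infty$ bound and hence quarters the allowable step length, the total time covered sums to $\lesssim A^{-2}$, not to $1$. Diverting to $g$ likewise fails because the offending factor $\langle x\rangle^{3\ell'-2}$ is not bounded above by a constant for $\ell' > 2/3$. In short, a fixed spatial profile with time-dependent scalar amplitude cannot absorb the cubic-in-$\Phi$ nonlinearity from $\kappa'$; one needs the weight's \emph{spatial growth rate} to grow in time, as the paper arranges. (A supersolution of the form $K(\langle x\rangle^2 + K^{2/(1-\ell')})^{\gamma(t)/2}$ with $\gamma$ increasing \emph{would} work and is essentially dual to the paper's argument—but this is not what your proposal constructs.)
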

\begin{proof}
  As in the proof of \cite[Prop.~2.10]{DGR21}, we start by defining a time-dependent weight $a$.
  Fix~$K \ge A+1 $, to be chosen later.
  Define $\ell_1 = (\ell + \ell')/2 \in (\ell, \ell')$, $\eps = \ell' - \ell_1 = (\ell' - \ell)/2$, and
  \begin{equation*}
    a(t,x) = K^{-1} \Big(\langle x \rangle ^2 + K^{2/(1-\ell')}\Big)^{-(\ell_1 + \eps t )/2}.
  \end{equation*}
  The function $z = a u^{[L]}$ satisfies the inequality $\| z(0, \anon ) \|_{L^\infty (\R ) } \le 1$ and the differential equation
  \begin{equation}
    \label{eq:eq09102}
    \begin{aligned}
      \partial_t z = z \partial_t (\log a ) &+ \kappa(u^{[L]}) \left [ \partial_x^2 z - z \left (\partial_x ^2 (\log a) - (\partial_x (\log a ) )^2 \right ) - 2 (\partial_x z)(\partial_x (\log a ) )\right] \\
      &+ \frac{\kappa'(u^{[L]})}{a} \left ( \partial_x z - \partial_x (\log a) z \right )^2 - H'(u^{[L]}) \partial_x z + H'(u^{[L]}) \partial_x (\log a) z.
    \end{aligned}
  \end{equation}
  Next, suppose $\|z \|_{\mathcal{C}([0,1] \times \R )} > 2$.
  By \cref{prop-sep2304}, $z$ is continuous.
  Since ${\|z(0, \anon) \|_{L^\infty (\R ) } \le 1}$ and $z(t,x)$ vanishes as~$x\to\pm\infty$,
  there exists a first time $t_* \in (0, 1)$ and a position $x_* \in \R$ such that
  \begin{equation*}
    |z(t_*, x_*) |  = 2 = \max_{[0, t_*] \times \R } |z|.
  \end{equation*}
  Let us assume without loss of generality that
  \begin{equation}
    \label{sep2404}
    z(t_*,x_*)=2,
  \end{equation}
  so that
  \begin{equation}
    \label{sep2402}
    \partial_t z(t_*,x_*)\ge 0,\quad \partial_x z(t_*,x_*)=0,\quad \text{and }\quad\partial_x^2z(t_*,x_*)\le 0.
  \end{equation}
  We can easily check that
  \begin{equation}
    \label{sep2326}
    0 \le a(t,x) \le 1, \quad
    |\partial_x (\log a) | \le 1, \quad
    \left | \frac{\partial_x (\log a ) }{a } \right | \le 1, \quad
    | \partial_x ^2 \log a | \le 3,
  \end{equation}
  and
  \begin{equation}
    \label{sep2327}
    \partial_t (\log a) = -\frac{\eps}{2}\log \Big(\langle x \rangle ^2 + K^{2/(1-\ell')} \Big)\le 0.
  \end{equation}
  We control the last term in \cref{eq:eq09102} using \cref{eq:Hderivbd} and \cref{sep2326}:
  \begin{equation}
    \label{sep2325}
    |H'(u^{[L]})z||\partial_x(\log a)|\le C(1+|u^{[L]}|)|az| \frac{|\partial_x(\log a)|}{|a|}\le C(1+|z|^2).
  \end{equation}
  We evaluate \cref{eq:eq09102} at $(t_*,x_*)$ and use the bounds  \cref{eq:alliptic}, \cref{sep2218}, \cref{sep2404}--\cref{sep2326}, and~\cref{sep2325}:
  \begin{equation*}
    0 \le 2\partial_t (\log a)|_{(t_*,x_*)} + C_1,
  \end{equation*}
  with a constant $C_1$ that does not depend on $K$.
  Using \cref{sep2327}, this becomes
  \begin{equation*}
    0 \le -\eps\log \left (\langle x_* \rangle ^2 + K^{2/(1-\ell')} \right ) +C_1.
  \end{equation*}
  If we choose $K$ large enough, independent of $L$,  the right side becomes negative---a contradiction.
  We conclude that if $K$ is sufficiently large,
  then $\|z \|_{C([0,1] \times \R )} \le 2$.
  That is,
  \begin{equation*}
    |u^{[L]}(t,x) | \le 2 K \left ( \langle x \rangle^2  + K^{2/(1-\ell')} \right )^{(\ell_1 + \eps t)/2} \le 2 K  \left ( \langle x \rangle^2  + K^{2/(1-\ell')} \right )^{\ell'/2}
  \end{equation*}
  for $(t,x) \in [0,1] \times \R$.
  Hence
  \begin{equation*}
    \| u^{[L]} (t, \anon) \|_{\mathcal{C}_{\p_{\ell'}}} \le C(K, \ell') \quad \text{for all } t \in [0, 1],
  \end{equation*}
  as desired.
\end{proof}
By \cref{prop-sep2306}, the regularity bound \cref{sep2212}, and \cite[Proposition~B.2]{DGR21}, the sequence~$\{u^{[L]}\}_{L \geq 1}$ is precompact in the topology of $\mathcal{C}(0,T;\mathcal{C}_{\p_{\ell'}})$ for any $\ell' > \ell$.
Diagonalizing, there exists a subsequential limit $u$ in $\mathcal{C}(0,T;\mathcal{X}_\ell)$.
It is straightforward to check that the limit solves~\cref{eq:uPDE-unforced} on the whole line and satisfies the weighted~$L^\infty$-bound~\cref{sep2704} in \cref{prop-sep2306}.
That is,
\begin{equation}
  \label{sep2706}
  \sup_{t \in [0,1]} \| u  (t, \anon) \|_{\mathcal{C}_{\p_{{\ell'}}}(\R)} \leq C(\| u_0 \|_{\mathcal{C}_{\p_{{\ell}}}(\R)}).
\end{equation}

To complete the proof of \cref{thm-sep2702}, we must show uniqueness and continuity with respect to the initial condition.
We first prove stability in a weighted $L^1$ space on the whole line.
Given $\ell \in (0, 1)$, define
\begin{equation*}
  \zeta(x) = \e^{2^{1-\ell} - \langle x \rangle ^{1-\ell}}
\end{equation*}
The following is an analogue of Lemma~3.5 of \cite{DGR21}.
\begin{prop}
  \label{prop-sep2402}
  Let $\ell \in (0, 1)$ and $T \in \R_+.$
  Let $u, v \in L^\infty(0, T; \mathcal{C}_{\p_{\ell}}(\R))$ solve \cref{eq:uPDE-unforced} with initial data in $\mathcal{C}_{\p_\ell}(\R)$.
  Let
  \begin{equation}
    \label{sep2702}
    K \coloneqq \max\left\{\|u\|_{L^\infty(0, T; \mathcal{C}_{\p_{\ell}}(\R))},\,\|v\|_{L^\infty(0, T; \mathcal{C}_{\p_{\ell}}(\R))}\right\}.
  \end{equation}
  Then there exists $C \in \R_+$ depending on $\ell,$ $\kappa_0$, and $C_H$ from \cref{eq:Hderivbd} such that for all $t \in [0, T]$,
  \begin{equation}
    \label{sep2710}    
    \int_{\R}|u(t,x) - v(t, x)| \zeta(x) \,\dif x \le \e^{C(K + 1)t} \int_{\R} |u(0,x) - v(0, x)| \zeta(x) \,\dif x.
  \end{equation}
\end{prop}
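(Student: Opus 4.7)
The plan is to imitate the $L^1$-contraction argument of \cref{prop:prop0916}, now weighted against the exponentially decaying factor $\zeta$. Before beginning, I would record the key property of $\zeta$: direct computation using $\partial_x\log\zeta = -(1-\ell)x\langle x\rangle^{-\ell-1}$ gives
\begin{equation*}
  |\partial_x\zeta|\le C_\ell\langle x\rangle^{-\ell}\zeta \quad \text{and} \quad |\partial_x^2\zeta|\le C_\ell\langle x\rangle^{-2\ell}\zeta
\end{equation*}
for some $C_\ell$ depending only on $\ell$. In particular, $\zeta$ and $\partial_x\zeta$ decay faster than any polynomial, so all integrations by parts below produce vanishing boundary terms.

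Setting $w\coloneqq\mathcal{K}(u)-\mathcal{K}(v)$ as in \cref{sep2410}, we have $\partial_t(u-v)=\partial_x^2 w-\partial_x[H(u)-H(v)]$. As in \cref{prop:prop0916}, let $\{F_\varepsilon\}_{\varepsilon>0}$ be the smooth convex approximants to $|\cdot|$ satisfying the estimates of \cref{sep2320}. I would multiply the equation by $F_\varepsilon'(w)\zeta$ and integrate over $\R$. Two integrations by parts in the diffusive term yield $-\int F_\varepsilon''(w)(\partial_x w)^2\zeta+\int F_\varepsilon(w)\partial_x^2\zeta$, and one integration by parts in the flux term yields $\int[H(u)-H(v)]F_\varepsilon''(w)(\partial_x w)\zeta+\int[H(u)-H(v)]F_\varepsilon'(w)\partial_x\zeta$. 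Young's inequality absorbs the mixed term into the dissipation; the resulting error is bounded by $C\int[H(u)-H(v)]^2F_\varepsilon''(w)\zeta\le C(K,\ell,\kappa_0,C_H)\varepsilon$ using $|u-v|\le\kappa_0^{-1}|w|$, the bound $w^2F_\varepsilon''(w)\le C\varepsilon\mathbf{1}_{|w|\le\varepsilon}$ from \cref{sep2320}, and the super-polynomial decay of $\zeta$. This error vanishes as $\varepsilon\to0$.

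The core of the argument is to bound the two surviving terms by $C(K+1)\int|u-v|\zeta$ with a constant depending only on $\ell$, $\kappa_0$, and $C_H$. For $\int F_\varepsilon(w)\partial_x^2\zeta$, the bounds $|F_\varepsilon(w)|\le C(|w|+\varepsilon)\le C\kappa_0^{-1}|u-v|+C\varepsilon$ and $|\partial_x^2\zeta|\le C_\ell\zeta$ suffice. For $\int[H(u)-H(v)]F_\varepsilon'(w)\partial_x\zeta$, combining \cref{eq:Hderivbd} with the pointwise bound $|u|,|v|\le K\langle x\rangle^\ell$, $|F_\varepsilon'|\le C$, and the refined decay $|\partial_x\zeta|\le C_\ell\langle x\rangle^{-\ell}\zeta$ gives a pointwise bound of the integrand by
\begin{equation*}
  C_H\bigl(1+K^{q/2}\langle x\rangle^{\ell q/2}\bigr)\langle x\rangle^{-\ell}|u-v|\zeta \le C_H(1+K^{q/2})|u-v|\zeta,
\end{equation*}
where the last inequality uses $\langle x\rangle^{\ell(q/2-1)}\le 1$, which holds \emph{precisely because} $q\le 2$ (since $\langle x\rangle\ge 2$ and $\ell>0$). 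This cancellation between the super-linear growth of $H$ and the first-order decay of $\partial_x\zeta$ is where I expect the main subtlety of the proof to lie, and it is what forces the sub-quadratic hypothesis on $H$.

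To close, I would pass to the limit $\varepsilon\to 0$ in the resulting estimate $\int F_\varepsilon'(w)\partial_t(u-v)\zeta\le C(K+1)\int|u-v|\zeta+O(\varepsilon)$, and identify the limit with $\tfrac{d}{dt}\int|u-v|\zeta$ by the duality trick at the end of \cref{prop:prop0916}: differentiate $\int F_\varepsilon(u-v)\zeta$ in $t$, integrate in time, and send $\varepsilon\to0$ by dominated convergence, using $\sgn(u-v)=\sgn(w)$ since $\mathcal{K}$ is strictly increasing and $\zeta\in L^1(\R)$. Continuity at $t=0$ of $t\mapsto\int|u(t)-v(t)|\zeta$ follows from $u,v\in L^\infty(0,T;\mathcal{C}_{\p_\ell})$. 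Gronwall's inequality applied to the resulting differential inequality yields \cref{sep2710}.
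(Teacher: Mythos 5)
Your proposal is correct and follows essentially the same route as the paper's proof: multiply by $F_\eps'(w)\zeta$, integrate by parts, absorb the mixed $F_\eps''(w)(\partial_xw)[H(u)-H(v)]$ term into the dissipation by Young's inequality, observe the surviving error is $O(\eps)$ by the bound $w^2F_\eps''(w)\le C\eps\one_{|w|\le\eps}$, and close by the cancellation between the sub-quadratic growth of $H$ and the factor $\langle x\rangle^{-\ell}$ in $\partial_x\zeta$, followed by Gr\"onwall. The only cosmetic difference is that the paper encapsulates the divided difference of $H$ in an auxiliary ratio $\xi = [H(u)-H(v)]/w$ with $\|\xi\|_{\mathcal{C}_{\p_\ell}}\le C(K+1)$ and bounds the remaining flux term using $|wF_\eps'(w)|\le CF_\eps(w)$, whereas you use $|F_\eps'|\le C$ and bound $|H(u)-H(v)|$ directly; these are equivalent.
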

\begin{proof}
  We follow the proof of Lemma~3.5 of \cite{DGR21}.
  Throughout, let $C$ denote a positive constant depending only on $\ell,\kappa_0, C_H$.
  We allow $C$ to change from line to line.
  
  First note that
  \begin{equation}
    \label{sep2416}
    \p_\ell |\partial_x \zeta | + |\partial_x ^2 \zeta | \le C \zeta.
  \end{equation}
  Recall the function $\mathcal{K}(u)$ defined in \cref{sep2410}.
  We define $\eta = u-v$, $w \coloneqq \mathcal{K}(u) - \mathcal{K}(v)$, and
  \begin{equation*}
    \xi = \frac{u-v}{w} \int_0^1 H' (\lambda u + (1-\lambda) v ) \,\dif\lambda = \frac{ \int_0^1 H' (\lambda u + (1-\lambda) v ) \,\dif\lambda} { \int_0 ^1 \kappa ( \lambda u + (1-\lambda) v ) \,\dif\lambda}.
  \end{equation*}
  Using \cref{eq:uPDE-unforced}, we can write an evolution equation for $\eta$ as
  \begin{equation}
    \label{eq:eq0918}
    \partial_t \eta = \partial_x ^2 w - \partial_x (\xi w).
  \end{equation}
  Note that \cref{eq:alliptic}, \cref{eq:Hderivbd}, and \cref{sep2702} imply
  \begin{equation}
    \label{sep2420}
    \|\xi\|_{\mathcal{C}_{\p_{\ell}}}\le C(1 + \|u\|_{ \mathcal{C}_{\p_{\ell}}}+\|v\|_{ \mathcal{C}_{\p_{\ell}}})\le C(K + 1).
  \end{equation}
  We approximate $F(x) = |x|$ by convex functions $F_\eps$ as in \cref{sep2412}--\cref{sep2320}.
  Multiplying~\cref{eq:eq0918}  by~$F_\eps ' (w) \zeta $ and integrating by parts in space, we find
  \begin{align*}
    \int_{\R} (\partial_t \eta) F_\eps ' (w) \zeta &= \int_{\R} \big[F_\eps ' (w) \partial_x^2 w - F_\eps' (w) \partial_x ( \xi w)\big] \zeta\\
                                                   & =  \int_{\R} \big[ \partial_x(F_\eps ' (w) \partial_x w) -F_\eps''(w)(\partial_xw)^2-  \partial_x (F_\eps' (w) \xi w) +F_\eps''(w)\xi w\partial_xw\big] \zeta\\
                                                   & = \int_{\R} \left [\partial_x ^2 (F_\eps (w)) - \partial_x (F_\eps ' (w) \xi w ) - F_\eps ''(w) (\partial_x w)^2 + F_\eps ''(w) \xi w \partial_x w  \right] \zeta\\
                                                   & = \int_{\R} [F_\eps (w) \partial_x^2 \zeta + F_\eps '(w)\xi w  \partial_x \zeta - F_\eps '' (w) ((\partial_x w)^2 - \xi w \partial_x w) \zeta].
  \end{align*}
   Young's inequality tells us that $(\partial_x w)^2 - \xi w \partial_x w\ge \frac12(\partial_x w)^2 -\frac12 \xi^2w^2\ge -\frac12 \xi^2w^2$, so since $F_\eps''\ge 0$ we obtain
  \begin{equation}
    \label{sep2418}
    \int_{\R} (\partial_t \eta) F_\eps ' (w) \zeta \le \int_{\R} \left [ F_\eps (w) \partial_x^2 \zeta + F_\eps'(w)\xi w  \partial_x \zeta + \frac{1}{2} F_\eps''(w) \xi^2 w^2 \zeta \right ].
  \end{equation}
  
  We successively bound the terms on the right side of \cref{sep2418}.
  For the first term,
  note that~\cref{eq:alliptic}, \cref{sep2320}, and the definition of $\mathcal{K}$ imply that
  \begin{equation}
    \label{eq:convex-approx-lipschitz}
    |F_\eps(w)|\le C(|w|+\eps)=C(|\mathcal{K}(u)-\mathcal{K}(v)|+\eps)\le C(|\eta| + \eps).
  \end{equation}
  Hence \cref{sep2416} yields 
  \begin{equation}
    \label{eq:boundfirstterm}
    \int_\R |F_\eps (w) \partial_x^2 \zeta| \le C \int_{\R} F_\eps (w) \zeta \le C \int_{\R} (|\eta| + \eps ) \zeta.
  \end{equation}
  To control the second term in \cref{sep2418}, we use \cref{sep2320}, \cref{sep2416}, and \cref{eq:convex-approx-lipschitz}:
  \begin{equation}
    \label{eq:boundsecondterm}
    \int_{\R} |F_\eps'(w)w \xi \partial_x \zeta|
    \le C \int_{\R}F_\eps (w) |\xi| \frac{\zeta}{\p_{\ell}}
    \le C \| \xi \|_{\mathcal{C}_{\p_{\ell}} (\R)}\int_{\R}(|\eta|+\eps) \zeta.
  \end{equation}
  Finally, the last term on the right side of \cref{sep2418} is bounded using the last inequality in~\cref{sep2320}:
  \begin{equation}
    \label{eq:boundlastterm}
    \int_{\R} |F_\eps''(w) \xi^2 w^2 \zeta| \le C\eps \| \xi \|_{\mathcal{C}_{\p_{\ell}}(\R)}^2 \|\p_\ell^2 \zeta \|_{L^1 (\R)}.
  \end{equation}
  
  Using \cref{eq:boundfirstterm}--\cref{eq:boundlastterm} in \cref{sep2418}, we have
  \begin{align*}
    \int_{\R} (\partial_t \eta) F_\eps ' (w) \zeta \le C(1 + \| \xi \|_{\mathcal{C}_{\p_{\ell}} }) \int_{\R} (|\eta| + \eps ) \zeta + C\eps \| \xi \|_{\mathcal{C}_{\p_{\ell}}}^2 \|\p_\ell^2 \zeta \|_{L^1}.
  \end{align*}
  Using \cref{sep2420} and $\zeta,\p_\ell^2\zeta \in L^1(\R)$, this becomes
  \begin{equation*}
    \int_{\R} (\partial_t \eta) F_\eps ' (w) \zeta \le C(K + 1)\int_{\R} |\eta| \zeta + C(K + 1)\eps.
  \end{equation*}
  We now take $\eps\to 0$ as in the proof of \cref{prop:prop0916}.
  Since $\sgn w =\sgn \eta$, we find
  \begin{equation}
    \label{sep2608}
    \begin{aligned}
      \frac{\dif}{\dif t}\int_{\R} |\eta|\zeta = \int_{\R} (\partial_t \eta) \sgn(\eta) \zeta \le C(K + 1) \int_{\R} |\eta|  \zeta.
    \end{aligned}
  \end{equation}
  Standard parabolic boundary estimates imply that $u$ and $v$ are continuous.
  Since $\p_\ell \ll \zeta^{-1}$ at infinity, the uniform bound \cref{sep2702} implies that the map $t \mapsto \int_{\R} |\eta(t,\anon)|\zeta$ is continuous on~$[0, T]$.
  Hence \cref{sep2710} follows from \cref{sep2608} and Gr\"onwall's inequality.
\end{proof}
Taking $u(0, \anon)=v(0, \anon)$ in the previous proposition, we obtain uniqueness for \cref{eq:uPDE-unforced}.
\begin{cor}
  Fix $m\in(0,1)$ and $T > 0$.
  Then \cref{eq:uPDE-unforced} admits a unique solution in  $\mathcal{C}(0, T; \mathcal{X}_m)$ for each initial condition in~${\mathcal X}_m$.
\end{cor}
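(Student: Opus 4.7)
The plan is to address existence and uniqueness separately, using the machinery developed in the preceding propositions.

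For existence, let $u_0 \in \mathcal{X}_m$. For each $\ell \in (m, 1)$, we have $u_0 \in \mathcal{C}_{\p_\ell}$, so the periodization construction carried out between \cref{prop-sep2306} and \cref{sep2706} is available. That construction yields $L$-periodic approximants $u^{[L]}$ to which \cref{prop-sep2306} provides a uniform-in-$L$ weighted $L^\infty$ bound at any exponent $\ell' \in (\ell, 1)$. Combined with the interior Hölder estimate \cref{sep2212} from \cref{prop-sep2304} and \cite[Proposition~B.2]{DGR21}, this gives precompactness of the family $\{u^{[L]}\}$ in $\mathcal{C}(0, T; \mathcal{C}_{\p_{\ell'}})$ for every $\ell' > \ell$. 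A diagonal extraction across a sequence $\ell' \downarrow \ell$ then produces a subsequential limit $u^{(\ell)} \in \mathcal{C}(0, T; \mathcal{X}_\ell)$ that solves \cref{eq:uPDE-unforced} with $u^{(\ell)}(0, \anon) = u_0$, as discussed after \cref{prop-sep2306}.

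To promote $u^{(\ell)}$ to a member of $\mathcal{C}(0, T; \mathcal{X}_m)$, I would use \cref{prop-sep2402} for consistency across the choice of $\ell$. Pick any $\ell_1, \ell_2 \in (m, 1)$ and set $\ell_* = \max(\ell_1, \ell_2) \in (m, 1)$. Both $u^{(\ell_1)}$ and $u^{(\ell_2)}$ lie in $L^\infty(0, T; \mathcal{C}_{\p_{\ell_*}})$ and share the initial datum $u_0$, so \cref{prop-sep2402} (applied with exponent $\ell_*$) forces $u^{(\ell_1)} = u^{(\ell_2)}$ identically on $[0, T] \times \R$. Consequently, all constructions coincide with a single function $u$, and this $u$ lies in $\mathcal{C}(0, T; \mathcal{X}_\ell)$ for every $\ell \in (m, 1)$. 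Since
\begin{equation*}
  \bigcap_{\ell \in (m, 1)} \mathcal{X}_\ell = \bigcap_{\ell \in (m,1)} \bigcap_{\ell' > \ell} \mathcal{C}_{\p_{\ell'}} = \bigcap_{\ell' > m} \mathcal{C}_{\p_{\ell'}} = \mathcal{X}_m,
\end{equation*}
we conclude $u \in \mathcal{C}(0, T; \mathcal{X}_m)$.

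For uniqueness, suppose $u, v \in \mathcal{C}(0, T; \mathcal{X}_m)$ both solve \cref{eq:uPDE-unforced} with the same initial datum. Choose any $\ell \in (m, 1)$. Since $u, v \in \mathcal{C}(0, T; \mathcal{C}_{\p_\ell})$ and $[0, T]$ is compact, they lie in $L^\infty(0, T; \mathcal{C}_{\p_\ell})$ with some finite $K$ as in \cref{sep2702}. \Cref{prop-sep2402} then gives
\begin{equation*}
  \int_\R |u(t, x) - v(t, x)| \zeta(x)\,\dif x \leq \e^{C(K+1)T} \int_\R |u(0, x) - v(0, x)| \zeta(x)\,\dif x = 0,
\end{equation*}
for every $t \in [0, T]$. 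Since $\zeta > 0$, this forces $u(t, \anon) = v(t, \anon)$ almost everywhere, and continuity promotes this to pointwise equality.

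The only genuinely delicate point is the consistency argument in the second paragraph: the weighted $L^\infty$ bound produced by \cref{prop-sep2306} deteriorates as $\ell' \downarrow m$, so a direct diagonal extraction does not immediately place the constructed solution in the smaller space $\mathcal{X}_m$. This is resolved precisely by invoking the $L^1$ stability of \cref{prop-sep2402} to identify the solutions obtained from different weight parameters, which is the role the uniqueness portion of the proof plays even for the existence claim.
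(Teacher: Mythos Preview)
Your argument is correct and aligns with the paper: uniqueness is precisely the one-line application of \cref{prop-sep2402} with equal initial data that the paper records, and your existence discussion simply expands on the periodization-and-compactness construction already carried out before the corollary. The consistency step you flag as ``genuinely delicate'' is harmless but unnecessary: the periodized approximants $u^{[L]}$ do not depend on the weight parameter $\ell$, so a single diagonal extraction over a countable sequence $\ell' \downarrow m$ (rather than $\ell' \downarrow \ell$ for each fixed $\ell$) already places the limit in $\mathcal{C}(0,T;\mathcal{X}_m)$ directly, without invoking \cref{prop-sep2402} to identify the $u^{(\ell)}$.
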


\subsection{Continuity of the solution map}

We can now prove the main theorem of this section.
\begin{proof}[Proof of \cref{thm-sep2702}]
  Fix $m \in (0, 1)$ and $s > 0$ and consider a sequence of initial conditions~$(u_{0,n})_{n \in \N}$ in $\mathcal{X}_m$ such that
  \begin{equation}
    \label{eq:u0conv}
    u_{0,n} \to u_0 \quad \text{in } \mathcal{X}_m \text{ as } n \to \infty
  \end{equation}
  for some $u_0 \in \mathcal{X}_m$.
  Let $v_n \coloneqq \Psi_s(u_{0,n})$ and $v \coloneqq \Psi_s(u_0)$, and
  fix $\ell \in (m, 1)$.
  We will show that~$v_n \to v$ in $\mathcal{C}_{\p_{\ell}}(\R)$ as $n \to \infty$.
  Since $\ell \in (m, 1)$ is arbitrary, this will imply that $v_n \to v$ in $\mathcal{X}_m$, as desired.

  Take $m < \ell'' < \ell' < \ell$.
  By \cref{eq:u0conv}, the sequence $(u_{n,0})$ is uniformly bounded in $\mathcal{C}_{\p_{\ell''}}(\R)$.
  Hence \cref{sep2706} implies that $(v_n)$ is uniformly bounded in $\mathcal{C}_{\p_{\ell'}}(\R)$.
  By Proposition~B.1 of \cite{DGR21}, it suffices to show that $v_n \to v$ locally uniformly in $\R$.
  That is, locally uniform convergence implies that $v_n \to v$ in $\mathcal{C}_{\p_\ell}$.

  Fix compact $K, K' \subset \R$ such that $K \subset \operatorname{int} K'$.
  We know $(v_n)$ is uniformly bounded on~$K'$.
  Moreover, by the interior regularity \cref{sep2212}, the sequence $(v_n)$ is uniformly bounded in~$\mathcal{C}^\alpha(K)$.
  Hence the sequence is equicontinuous on $K$.
  On the other hand, \cref{prop-sep2402} implies that $v_n \to v$ in $L^1(K)$.
  Equicontinuity allows us to upgrade this convergence to~$L^\infty(K)$.
  Therefore $v_n \to v$ locally uniformly, and the proof is complete.
\end{proof}

\subsection{The Hamilton--Jacobi equation}
The relationship between conservation laws and Hamilton--Jacobi equations is well-known: a solution to a conservation law is the derivative of a solution to a Hamilton--Jacobi equation. In the present weighted-space setting, we have established the well-posedness theory first for stochastic conservation laws.
We  now extend the theory to the corresponding Hamilton--Jacobi equation.
\begin{prop}
  \label{prop:hamiltonjacobi}
  Let $u$ be a solution to \cref{eq:uPDE}. Fix a smooth, compactly-supported function~$\zeta$ such that $\int_\R \zeta(z)\,\dif z=1$, and
  define
  \begin{equation}
    \label{eq:hintermsofu}
    h(t,x) = \int_\R \zeta(z)\int_z^x u(t,y)\,\dif y\,\dif z + \int_0^t\!\!\int_\R \zeta(z)[\kappa(u(s,z))\partial_xu(s,z)-H(u(s,z))]\,\dif z\,\dif s.
  \end{equation}
  Then
  \begin{equation}
    \label{eq:upartialxh}
    u = \partial_xh
  \end{equation}
  and $h$ solves the Hamilton--Jacobi equation
  \begin{equation}
    \label{eq:hHJPDE}
    \partial_t h =\kappa(u(t,x))\partial_x^2h(t,x)-H(\partial_x h(t,x))+V(t,x).
  \end{equation}

\end{prop}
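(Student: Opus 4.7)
The plan is to verify both claims by direct differentiation of the formula \cref{eq:hintermsofu}, with the required regularity of $u$ provided by \cref{prop-sep2304} and the uniform weighted bound \cref{sep2706}; since $\zeta$ is compactly supported, all integrands and their derivatives are bounded on the relevant compact sets, which justifies interchanging $\partial_t$ with the integrals over $z$ and $y$. The identity $u = \partial_x h$ is immediate: the second summand in \cref{eq:hintermsofu} is independent of $x$, and $\partial_x \int_z^x u(t,y)\,\dif y = u(t,x)$, so $\partial_x h(t,x) = u(t,x) \int_\R \zeta = u(t,x)$.

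For the Hamilton--Jacobi equation, I would first treat $t \in \R_+ \setminus \Z$, where $u$ is a classical solution of the unforced equation \cref{eq:uPDE-unforced} in a neighborhood of $t$. Differentiating \cref{eq:hintermsofu} in $t$, substituting $\partial_t u(t,y) = \partial_y[\kappa(u)\partial_y u - H(u)]$, and applying the fundamental theorem of calculus to the inner integral gives
\begin{equation*}
  \int_\R \zeta(z) \int_z^x \partial_t u(t,y)\,\dif y\,\dif z = \bigl[\kappa(u(t,x))\partial_x u(t,x) - H(u(t,x))\bigr] - \int_\R \zeta(z)\bigl[\kappa(u(t,z))\partial_z u(t,z) - H(u(t,z))\bigr]\,\dif z.
\end{equation*}
The $z$-dependent term cancels against the $t$-derivative of the second summand of $h$, leaving $\partial_t h(t,x) = \kappa(u)\partial_x u - H(u)$. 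Replacing $u$ by $\partial_x h$ and $\partial_x u$ by $\partial_x^2 h$, this is exactly \cref{eq:hHJPDE} with $V = 0$, consistent with $V$ vanishing away from integer times.

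It remains to handle the kick times $t = s \in \Z_+$, where the dynamics give $u(s,\anon) - u(s-,\anon) = \partial_x \mathsf{V}_s$, while the time integral defining the second summand in \cref{eq:hintermsofu} is continuous across $s$. Therefore
\begin{equation*}
  h(s,x) - h(s-,x) = \int_\R \zeta(z) \int_z^x \partial_y \mathsf{V}_s(y)\,\dif y\,\dif z = \mathsf{V}_s(x) - \int_\R \zeta(z)\mathsf{V}_s(z)\,\dif z,
\end{equation*}
which matches the delta contribution $\mathsf{V}_s(x)\delta(t - s)$ on the right-hand side of \cref{eq:hHJPDE} modulo a spatially constant shift $\int \zeta(z)\mathsf{V}_s(z)\,\dif z$. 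This discrepancy is harmless, since \cref{eq:hHJPDE} is invariant under adding to $h$ an arbitrary function of $t$ alone, and only the spatial structure of $h$ (equivalently, the dynamics of $u = \partial_x h$) is relevant for the sequel. The main technical point is therefore to cleanly separate the smooth evolution between kicks (handled by the classical computation above) from the distributional jump at integer times; the explicit kick structure of $V$ makes both steps routine once the well-posedness theory of \cref{sec:wellposedness} is in hand.
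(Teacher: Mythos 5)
Your proof follows the same direct-differentiation strategy as the paper's, and the computation of $\partial_x h = u$ and of $\partial_t h$ off the integer times is the same. What you add, and do correctly, is the careful separation of the smooth evolution between kicks from the jumps at integer times; and in doing so you correctly identify a residual term that the paper's displayed chain silently drops. Indeed, tracking every term, the time derivative of \cref{eq:hintermsofu} is
\begin{equation*}
\partial_t h(t,x) = \kappa(u(t,x))\partial_x u(t,x) - H(u(t,x)) + V(t,x) - \int_\R \zeta(z)\,V(t,z)\,\dif z,
\end{equation*}
so at a kick time $s$ the jump of $h$ is $\mathsf{V}_s(x) - \int_\R \zeta\,\mathsf{V}_s$, exactly as you compute. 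The paper's last display omits the $-\int \zeta V$ term, so \cref{eq:hHJPDE} holds for the $h$ of \cref{eq:hintermsofu} only up to a spatially constant (random, time-dependent) correction. You were right to flag this.

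Where your writeup slips is in the justification you give for why the discrepancy is harmless: the claim that \cref{eq:hHJPDE} ``is invariant under adding to $h$ an arbitrary function of $t$ alone'' is false. If $\tilde h = h + f(t)$, then $\partial_t \tilde h = \partial_t h + f'(t)$, whereas $\partial_x\tilde h = \partial_x h$ and $\partial_x^2\tilde h = \partial_x^2 h$, so $\tilde h$ solves a version of \cref{eq:hHJPDE} with forcing $V + f'(t)$ rather than $V$. The correct reason the residual is harmless is the one you state in the second half of the same sentence: the extra term is $x$-independent, hence does not affect $u = \partial_x h$, and every subsequent use of the proposition (in particular the integration of $\partial_t \tilde g$ leading to \cref{eq:K-stationary}) either depends only on $\partial_x h$ or is insensitive to a spatially constant additive shift. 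Strictly speaking the proposition as stated should either replace \cref{eq:hHJPDE} by $\partial_t h = \kappa(u)\partial_x^2 h - H(\partial_x h) + V - \int_\R\zeta V$, or modify \cref{eq:hintermsofu} by adding $\int_0^t\int_\R\zeta(z)V(s,z)\,\dif z\,\dif s$ to the second summand; either fix is immediate and does not change anything downstream. Drop the false ``invariance'' remark and keep only the $x$-independence argument, and your proof is complete and actually cleaner than the paper's.
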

\begin{proof}
  The property \cref{eq:upartialxh} is clear by differentiating \cref{eq:hintermsofu} in $x$.
  Differentiating \cref{eq:hintermsofu} in time and applying \cref{eq:uPDE}, we obtain
  \begin{align*}
    \partial_t h(t,x) &= \int_\R \zeta(z)\left[\kappa(u(t,y))\partial_xu(t,y)-H(u(t,y))+V(t,y)\right]\Big|_{y=z}^{y=x}\,\dif z\\
                      &\hspace{3cm}+ \int_\R \zeta(z)[\kappa(u(t,z))\partial_xu(t,z)-H(u(t,z))]\,\dif z\\
                      &=\kappa(t,x)\partial_xu(t,x)-H(u(t,x))+V(t,x).
  \end{align*}
  In the final identity, we have used $\int_\R \zeta(z)\,\dif z=1$.
  Recalling \cref{eq:upartialxh}, we obtain \cref{eq:hHJPDE}.
\end{proof}

\section{Existence of spacetime-stationary solutions\label{sec:existence}}

In this section, we prove the existence of spacetime-stationary
solutions to~\cref{eq:uPDE}.
More precisely, we show that the set $\overline{\mathscr{P}}_{G}(\mathcal{A}_{1})$ defined in \cref{eq:PGbar}
is nonempty. We first need some estimates on the solutions,
obtained in \cref{subsec:derivbd,subsec:solnbound}.
We prove the main result of this section in \cref{subsec:existenceofinvariantmeasures}.

\subsection{Derivative bound\label{subsec:derivbd}}

We begin with an $L^{2}$ bound on the derivative of the solution.
\begin{lem}
  \label{lem:derivbd}
  Let $u$ solve \cref{eq:uPDE} with constant initial condition
  $u(0-,\anon)\equiv a \in\R$.
  Then for all $t\in[1,\infty)$ and $x \in \R$, we have
  \begin{equation}
    \label{eq:derivbd}
    \frac{1}{t}\int_{0}^{t}\E(\partial_{x}u(s,x))^{2}\,\dif s\le  
    \frac{a^2}{2 \kappa_0 t} + 
    \frac{1}{\kappa_0}\E[\partial_x\mathsf{V}_0(0)]^2.
  \end{equation}
\end{lem}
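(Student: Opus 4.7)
The proof is an $L^{2}$ energy estimate combined with careful accounting of the kick contributions. Formally, multiplying the unforced equation by $u$ and integrating in space yields
\begin{equation*}
  \frac{1}{2}\frac{\dif}{\dif t}\int u^{2}\,\dif x = -\int \kappa(u)(\partial_{x}u)^{2}\,\dif x + \int \partial_{x}\mathcal{H}(u)\,\dif x,
\end{equation*}
where $\mathcal{H}$ is a primitive of $H$. On a torus the last integral vanishes by periodicity, and the first term on the right is bounded above by $-\kappa_{0}\int(\partial_{x}u)^{2}$. Since $u$ is not in $L^{2}(\R)$, one cannot carry out this manipulation directly on the line; I will instead work on $\T_{L}$ and send $L\to\infty$. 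Spatial stationarity (inherited from the constant initial condition $a$ and the stationarity of $V$) guarantees that $\E (\partial_{x}u(s,x))^{2}$ is well-defined and independent of $x$, which survives the limit.

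\textbf{Torus computation.} Let $u^{[L]}$ denote the periodized solution on $\T_{L}$ from \cref{sec:wellposedness} with constant initial condition $a$ and periodized kick noise $\mathsf{V}_{s}^{[L]}$. Between two consecutive kick times, the identity above gives
\begin{equation*}
  \frac{1}{2}\int_{\T_{L}} u^{[L]}(t-)^{2}\,\dif x + \int_{s}^{t}\!\int_{\T_{L}}\kappa(u^{[L]})(\partial_{x}u^{[L]})^{2}\,\dif x\,\dif\tau = \frac{1}{2}\int_{\T_{L}} u^{[L]}(s+)^{2}\,\dif x
\end{equation*}
for $t\in(s,s+1)$. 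At the kick itself, $u^{[L]}(s+)=u^{[L]}(s-)+\partial_{x}\mathsf{V}_{s}^{[L]}$ yields
\begin{equation*}
  \int_{\T_{L}}\big[u^{[L]}(s+)^{2} - u^{[L]}(s-)^{2}\big]\,\dif x = 2\int_{\T_{L}}u^{[L]}(s-)\partial_{x}\mathsf{V}_{s}^{[L]}\,\dif x + \int_{\T_{L}}(\partial_{x}\mathsf{V}_{s}^{[L]})^{2}\,\dif x.
\end{equation*}
On taking expectations, the cross term vanishes: by independence of $u^{[L]}(s-)$ from $\mathsf{V}_{s}^{[L]}$ and Fubini, it reduces to an integral of $\E u^{[L]}(s-,x)$ against $\E\partial_{x}\mathsf{V}_{s}^{[L]}(x)$, and the latter integrates to zero on $\T_{L}$ as the derivative of a periodic function.

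\textbf{Telescoping and limit.} Summing over the at most $t+1$ kicks in $[0,t]$ and discarding the nonnegative terminal energy,
\begin{equation*}
  \kappa_{0}\int_{0}^{t}\!\int_{\T_{L}}\E(\partial_{x}u^{[L]})^{2}\,\dif x\,\dif\tau \le \frac{a^{2}L}{2} + \frac{t+1}{2}\E\int_{\T_{L}}(\partial_{x}\mathsf{V}_{0}^{[L]})^{2}\,\dif x.
\end{equation*}
Dividing by $L$ and sending $L\to\infty$, spatial stationarity converts both $L$-averages into pointwise expectations, while the convergence $u^{[L]}\to u$ from \cref{sec:wellposedness} transfers the bound to the line. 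Dividing through by $\kappa_{0}t$ and using $(t+1)/(2t)\le 1$ for $t\ge 1$ yields \cref{eq:derivbd}. The substantive content is the energy identity and the vanishing of the kick cross term; the main technical step is the rigorous passage to the torus limit, but this is routine given the uniform estimates and continuity already established.
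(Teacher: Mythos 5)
Your high-level plan is the right one, and it closely parallels the paper: perform the energy estimate on the unforced evolution between kicks, handle the kick contributions separately by expanding the square, and note that the kick cross-term should vanish in expectation by independence and mean-zero-ness of $\partial_x\mathsf{V}_s$. The telescoping/induction over kicks and the final division by $t$ are also the same. However, your choice to handle the noncompactness by periodizing to $\T_L$ and sending $L\to\infty$ both differs from the paper and contains a genuine gap. The paper never leaves the line: it fixes a finite window $[0,L]$, integrates the flux identity over $(0,\theta)\times[0,L]$, and invokes \cite[Lemma~D.1]{DGR21} together with space-stationarity to conclude that the boundary term $\int_0^\theta[u\kappa(u)\partial_x u - J(u)]\,\dif t\big|_{x=0}^{x=L}$, being a difference of identically distributed, absolutely integrable random variables, has zero mean. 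At each kick, the paper uses the \emph{pointwise} identity $\E\partial_x\mathsf{V}_k(x)=0$ (stationarity of $\mathsf{V}_k$ on the line) and independence of $\mathsf{V}_k$ from $u(k-,\anon)$ to kill the cross term at every $x$.

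Your kick cross-term argument does not close. You claim that $\int_{\T_L}\E u^{[L]}(s-,x)\,\E\partial_x\mathsf{V}_s^{[L]}(x)\,\dif x=0$ because $\E\partial_x\mathsf{V}_s^{[L]}$ ``integrates to zero on $\T_L$ as the derivative of a periodic function.'' That reasoning requires either that $\E u^{[L]}(s-,\anon)$ be constant or that $\E\partial_x\mathsf{V}_s^{[L]}(x)=0$ pointwise; the vanishing of $\int_{\T_L}\E\partial_x\mathsf{V}_s^{[L]}$ alone does not make the product integrate to zero. But the periodization used in \cref{sec:wellposedness} (via the cutoff $\chi^{[L]}$) is not space-stationary on the torus: $\E\mathsf{V}_s^{[L]}(x)=\E\mathsf{V}_s(0)\sum_j\chi^{[L]}(x-jL)$ depends on $x$, so $\E\partial_x\mathsf{V}_s^{[L]}\not\equiv 0$, and likewise $\E u^{[L]}(s-,\anon)$ is not constant for the same reason. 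You would need either to construct a genuinely stationary periodic noise (not what the paper does), or to quantify that the resulting error is $o(L)$ after integrating; neither is provided. Separately, the convergence $u^{[L]}\to u$ established in \cref{sec:wellposedness} is for the unforced flow $\Psi_s$ with periodized \emph{initial data}; a multi-kick, periodized-noise dynamics on $\T_L$ is a different object that has not been defined or shown to converge to the solution on the line, and the step converting $L^{-1}\int_{\T_L}\E(\partial_x u^{[L]})^2$ into the pointwise $\E(\partial_x u(s,x))^2$ also relies on a stationarity that the torus approximation lacks. The paper's route of working directly on $\R$, using pointwise stationarity for the kicks and \cite[Lemma~D.1]{DGR21} for the boundary terms, avoids all of these issues.
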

\begin{proof}
  We prove the following by induction on $k$:
  for all $k \in \Z_{\geq 0}$, $\theta \in (0, 1]$, and $x \in \R$,
  \begin{equation}
    \label{eq:energy-induction}
    \E [u((k + \theta)-, x)]^2 \leq - 2\kappa_0 \int_0^{k + \theta} \E [\partial_x u(t, x)]^2 \, \dif t + a^2 + (k + 1)\E [\partial_x\mathsf{V}_0(0)]^2.
  \end{equation}
  We begin with the base case $k = 0$.
  Because $\mathsf{V}_0$ is stationary, and hence $\partial_x\mathsf{V}_0$ is stationary and mean-zero, we have
  \begin{equation}
    \label{eq:init-moment}
    \E [u(0+, x)]^2 = \E [a + \partial_x\mathsf{V}_0(x)]^2 = a^2 + \E [\partial_x\mathsf{V}_0(0)]^2 \quad \text{for all } x \in \R.
  \end{equation}
  On the time interval $(0, \theta)$, the solution $u$ satisfies the unforced equation \cref{eq:uPDE-unforced}.
  Let $J(u)$ denote an antiderivative of $uH'(u)$.
  By the chain rule, we have
  \begin{equation*}
    \frac{1}{2}\partial_{t}\big(u^{2}\big)=u\partial_{x}\big(\kappa(u)\partial_{x}u-H(u)\big) = -\kappa(u) (\partial_x u)^2 + \partial_x \left[u \kappa(u) \partial_x u - J(u)\right].
  \end{equation*}
  We now fix $L \in \R_+$ and integrate over $(0,\theta) \times [0, L]$:
  \begin{equation*}
    \frac{1}{2}\int_0^L u^2 \,\dif x \, \Bigg|_{t = 0+}^{t = \theta-} + 
    \int_{(0, \theta) \times [0, L]} \kappa(u) (\partial_x u)^2 \,\dif x\,\dif t = \int_{0}^{\theta} \left[u\kappa(u)\partial_x u - J(u)\right] \dif t \, \Bigg|_{x = 0}^{x = L}.
  \end{equation*}
  By \cref{eq:init-moment}, the negative part of the left side is absolutely integrable.
  Thus by space-stationarity and Lemma~D.1 in \cite{DGR21}, the right side is absolutely integrable and has mean zero.
  Using~\cref{eq:init-moment},~\cref{eq:alliptic}, and space-stationarity, we find
  \begin{equation*}
    \E[u(\theta-, x)]^2 \leq -2\kappa_0 \int_0^\theta \E[\partial_x u(t, x)]^2 \, \dif t + a^2 + \E [\partial_x \mathsf{V}_0(0)]^2
  \end{equation*}
  for all $x \in \R$.
  This confirms the claim \cref{eq:energy-induction} for $k = 0$.

  Now suppose that \cref{eq:energy-induction} holds for some $k \ge 0$.
  We now show it for $k + 1$.
  Given~$\theta \in (0, 1]$,
  by an argument identical to that in the base case, we have
  \begin{equation}
    \label{eq:energy-bd}
    \begin{aligned}
      \frac{1}{2}\int_0^L u^2 \,\dif x \, \Bigg|_{t = (k + 1)+}^{t = (k + 1 + \theta)-} 
      &+ \int_{(k + 1, k + 1 + \theta) \times [0, L]} \kappa(u) (\partial_x u)^2\,\dif x\,\dif t\\
      &\hspace{3cm}= \int_{k + 1}^{k + 1 + \theta} \left[u\kappa(u)\partial_x u - J(u)\right] \dif t \, \Bigg|_{x = 0}^{x = L}.
    \end{aligned}
  \end{equation}
  Recall that $\mathsf{V}_{k + 1}$ is independent of $u((k + 1)-, \anon)$.
  Using stationarity and the inductive hypothesis \cref{eq:energy-induction} with $\theta = 1$, we therefore obtain
  \begin{align*}
    \E [u((k + 1)+, x)]^2 
    &= \E [u((k + 1)-, x)]^2 + \E[\partial_x \mathsf{V}_{k + 1}(x)]^2\\
    &\leq -2\kappa_0 \int_{(0, k + 1)} \E[\partial_x u(t, x)]^2\, \dif t + a^2 + (k + 2) \E [\partial_x\mathsf{V}_0(0)]^2 < \infty
  \end{align*}
  for all $x \in \R$.
  It follows that the negative part of the left side of \cref{eq:energy-bd} is absolutely integrable.
  Again, space-stationarity and \cite[Lemma~D.1]{DGR21} imply that the right side is absolutely integrable and has mean zero.
  Taking expectation in \cref{eq:energy-bd} and rearranging, space-stationarity and \cref{eq:alliptic} yield \cref{eq:energy-induction} for $k + 1$.
  By induction, \cref{eq:energy-induction} holds for all $k \in \Z_{\geq 0}$.
  Now fix $t \in [1, \infty)$ and $x \in \R$.
  Taking $k \coloneqq \lceil t \rceil - 1$ and $\theta \coloneqq t - k$, we can rearrange \cref{eq:energy-induction} to obtain \cref{eq:derivbd}.
\end{proof}

\subsection{Solution bound\label{subsec:solnbound}}

In this section, we fix $a \in \R$ and assume that $u$ solves \cref{eq:uPDE} with initial condition $u(0-, \anon)\equiv a$.
Our goal is to prove the following proposition.
\begin{prop}
  \label{prop:Hhbound}
  There exists a constant $C <+\infty$ depending only on $\kappa_0,\lambda,c_2$, and the law of $V$ such that for all $t\geq 1$ and $x\in\R$, we have
  \begin{equation}
    \label{eq:time-ave-mean}
    \frac{1}{t}\int_{0}^{t}\E u(s,x)\,\dif s = a
  \end{equation}
  and
  \begin{equation*}
    \frac{1}{t}\int_{0}^{t}\E H(u(s,x))\,\dif s\le C\langle a \rangle^2.
  \end{equation*}
\end{prop}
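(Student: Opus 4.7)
The first identity follows immediately from spatial stationarity of $u$. Because $u(0-, \cdot) \equiv a$ and the kick noise $V$ are translation-invariant, so is $u(t, \cdot)$ for every $t \geq 0$. Between kicks, $\partial_t\E u(t, x) = \partial_x\E[\kappa(u)\partial_x u - H(u)] = 0$ by stationarity, and at each integer time $\E u(k+, x) - \E u(k-, x) = \partial_x\E\mathsf{V}_k(x) = 0$; hence $\E u(t, x) \equiv a$ and the first identity is immediate.

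For the Hamiltonian bound, my plan is to reduce the estimate to a lower bound on the expected Hamilton--Jacobi antiderivative and then establish that lower bound via a Cole--Hopf-type transform. Let $h$ be the antiderivative from \cref{prop:hamiltonjacobi}, normalized by a symmetric $\zeta$ so that $h(0-, x) = ax$. Taking expectation in the HJ equation $\partial_t h = \kappa(u)\partial_x u - H(u) + V$ and using $\E \partial_x \mathcal{K}(u) = 0$ (with $\mathcal{K}' = \kappa$) by spatial stationarity, I obtain
\[
\int_0^t \E H(u(s, 0))\,\dif s = -\E h(t, 0) + \mu_V(\lfloor t\rfloor + 1),
\]
where $\mu_V = \E\mathsf{V}_0(0)$. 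The claim therefore reduces to proving the matching lower bound $\E h(t, 0) \geq -C\langle a\rangle^2 t - C$.

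The difference $\tilde h \coloneqq h - ax$ satisfies a translation-invariant SPDE with the translation-invariant initial datum $\tilde h(0-, \cdot) \equiv 0$, so $\tilde h(t, \cdot)$ is spatially stationary. Setting $\phi \coloneqq e^{-\lambda h}$ with $\lambda$ as in \cref{eq:Hbd} and \cref{eq:vbarexpmoment}, the key compatibility $\lambda H(u) \leq \lambda^2\kappa_0 u^2 + \lambda c_2 \leq \lambda^2\kappa(u)u^2 + \lambda c_2$ (via $\kappa \geq \kappa_0$) yields the pointwise inequality
\[
\partial_t \phi \leq \kappa(u)\partial_x^2 \phi + \lambda c_2\, \phi
\]
between kicks, together with $\phi(k+, x) = \phi(k-, x)\, e^{-\lambda\mathsf{V}_k(x)}$ at each kick. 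Conjugating by $e^{\lambda a x}$ to pass to the spatially stationary $\tilde\phi \coloneqq e^{-\lambda\tilde h}$, the aim is to establish a Gr\"onwall inequality $F'(t) \leq C\langle a\rangle^2 F(t)$ between kicks, where $F(t) \coloneqq \E\tilde\phi(t, 0)$. Combined with the multiplicative jump $F(k+) = F(k-) \cdot \E e^{-\lambda\mathsf{V}_0(0)}$ (finite by \cref{eq:vbarexpmoment}, since $\mathsf{V}_0(0) \geq \overline{\mathsf{V}}_0(-1)$ pointwise), iteration yields $F(t) \leq e^{C\langle a\rangle^2 t + Ct}$, and then Jensen's inequality applied to $e^{-\lambda(\cdot)}$ gives $\E h(t, 0) = \E\tilde h(t, 0) \geq -\lambda^{-1}\log F(t) \geq -C\langle a\rangle^2 t - Ct$, as required.

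The main obstacle will be closing the Gr\"onwall estimate for $F$ when $\kappa$ is nonconstant. When $\kappa$ is constant, $\tilde\phi$ satisfies $\partial_t\tilde\phi \leq \kappa_0\partial_x^2\tilde\phi - 2\lambda a\kappa_0\partial_x\tilde\phi + (\lambda^2 a^2\kappa_0 + \lambda c_2)\tilde\phi$ between kicks, and spatial stationarity of $\tilde\phi$ kills both the diffusion and drift upon taking expectation at $x = 0$, leaving only the source $C\langle a\rangle^2 F(t)$. For general $\kappa(u)$ the same procedure requires integration by parts in the expectation to handle $\E[\kappa(u)\partial_x^2\tilde\phi]$ and $\E[\kappa(u)\partial_x\tilde\phi]$, which produces a correction of the form $\lambda^2\E\bigl[\bigl((u^2 - a^2)\kappa(u) + (u - a)(a\kappa(0) - \mathcal{K}(u))\bigr)\tilde\phi\bigr]$ that exceeds a multiple of $F(t)$ by an amount proportional to $\E[(u-a)^2\tilde\phi(t, 0)]$. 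Absorbing this correction will require either exploiting the additional negative contribution $-\lambda^2(\kappa(u) - \kappa_0) u^2 \phi$ that appears in the pointwise bound when one does not invoke $\kappa \geq \kappa_0$ to eliminate it, together with the regularity bound $|\kappa'(u)| \leq C_\kappa(1 + |u|)$ from \cref{sep2218}, or a coupled estimate on $\E[(u-a)^2\tilde\phi(t, 0)]$ via a parallel energy-type argument.
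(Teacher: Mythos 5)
Your plan---control $\E H(u)$ by lower-bounding $\E h(t,0)$ via a Cole--Hopf transform of the Hamilton--Jacobi antiderivative---is the same high-level strategy as the paper's, but the execution leaves two genuine gaps. The first is a logical ordering problem: your argument for \cref{eq:time-ave-mean} differentiates $\E u$ in $t$ and kills $\partial_x\E[\kappa(u)\partial_x u - H(u)]$ ``by stationarity,'' but that step needs $\E|H(u)|,\,\E|\kappa(u)\partial_x u|<\infty$, which is not known a priori. In the paper, the mean identity is derived \emph{after} the Hamiltonian bound (using \cref{eq:Hbd} and \cite[Lemma D.1]{DGR21} to justify the exchange), so your order is backwards.

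The second, more substantive, gap is that the Gr\"onwall step---which you correctly single out as ``the main obstacle''---is left open: you propose two possible repairs without carrying either through, and each integration by parts you invoke ($\E\partial_x(\text{stationary})=0$) is itself an exchange that requires integrability you do not yet have. The paper avoids every one of these issues by a different mechanism. It first reduces to mean zero with a Galilean shift $\tilde u(t,x)=u(t,x+2\lambda\kappa_0 a t)-a$, $\tilde H(s)=H(s+a)-2\lambda\kappa_0 a s-\lambda\kappa_0 a^2-c_2$, which absorbs the $a$-dependence into $\tilde H$ rather than creating the drift terms your $e^{\lambda ax}$ conjugation produces. It then compares the shifted $\tilde g$ from below with an auxiliary supersolution $h$ solving $\partial_t h=\kappa(\partial_x h)[\partial_x^2 h-\lambda(\partial_x h)^2]+V-c_2$ (\cref{lem:gandhcomp}), whose exact Cole--Hopf $\phi=e^{-\lambda h}$ solves a genuinely linear parabolic equation \cref{eq:phiPDE}. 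Crucially, the growth of $\phi$ is then controlled not by a Gr\"onwall-in-expectation argument but by a \emph{pathwise} supersolution bound (\cref{lem:fkbd}): $\psi(t,x)=t^{-\kappa_0^2/2}e^{-\kappa_0x^2/(4t)}$ is a supersolution for any diffusivity in $[\kappa_0,\kappa_0^{-1}]$, giving $\phi(t,x)\le C\sum_j\phi_j e^{-C^{-1}(x-j-1/2)^2}$ over one unit time interval without any integration by parts or commutation of $\kappa(u)$ with derivatives. The induction in \cref{lem:SHE-growth} then propagates $\E\phi\le e^{C(t+1)}$ using only \cref{eq:vbarexpmoment}. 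Because this argument is pathwise, the nonconstant-$\kappa$ and integrability difficulties you run into simply never arise. Your route may be salvageable---the sharper pointwise inequality retaining the $-\lambda^2(\kappa(u)-\kappa_0)u^2\phi$ term does produce the cancellations you would need---but as written the central estimate is missing and the integration-by-parts steps are unjustified.
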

\begin{cor}
  \label{cor:Hqbound}
  With $q$ as in \assuref{H}, there exists a constant $C <\infty$, depending only on $\kappa_0,c_1,c_2,\lambda$ and the law of $V$, such that for all $t \geq 1$ and $x\in\R$, we have
  \begin{equation*}
    \frac{1}{t}\int_{0}^{t}\E|u(s,x) - a|^{q}\, \dif s \le C\braket{a}^2.
  \end{equation*}
\end{cor}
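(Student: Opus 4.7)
The corollary is a short consequence of \cref{prop:Hhbound} combined with the growth lower bound on $H$ from \assuref{H}.

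First, I would invert the lower bound in \cref{eq:Hbd}: since $c_1|u|^q - c_1^{-1} \le H(u)$, we have
\begin{equation*}
  |u|^q \le c_1^{-1} H(u) + c_1^{-2} \quad \text{for all } u \in \R.
\end{equation*}
Next, I would use the elementary convexity bound $|u - a|^q \le 2^{q-1}(|u|^q + |a|^q)$, which (since $q \in (1,2]$) yields
\begin{equation*}
  |u(s,x) - a|^q \le 2^{q - 1}\bigl(c_1^{-1} H(u(s,x)) + c_1^{-2} + |a|^q\bigr).
\end{equation*}

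Taking expectations, averaging over $s \in [0, t]$, and applying the bound on $t^{-1}\int_0^t \E H(u(s,x))\,\dif s$ from \cref{prop:Hhbound}, we obtain
\begin{equation*}
  \frac{1}{t}\int_0^t \E |u(s,x) - a|^q \, \dif s \le 2^{q - 1}\bigl(c_1^{-1} C \braket{a}^2 + c_1^{-2} + |a|^q\bigr),
\end{equation*}
where $C$ is the constant from \cref{prop:Hhbound} depending only on $\kappa_0, \lambda, c_2$, and the law of $V$.

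Finally, I would absorb the remaining terms into $\braket{a}^2$: since $\braket{a} = \sqrt{4 + a^2} \ge 2$, we have $c_1^{-2} \le (c_1^{-2}/4)\braket{a}^2$, and since $q \le 2$, the bound $|a|^q \le \max\{1, a^2\} \le \braket{a}^2$ holds. Combining these gives \cref{eq:Hqbound} with a new constant depending only on $\kappa_0, c_1, c_2, \lambda,$ and the law of $V$. There is no real obstacle here; this is a direct algebraic consequence of \cref{prop:Hhbound} and the coercivity built into \assuref{H}.
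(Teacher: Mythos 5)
Your proof is correct and matches the paper's argument essentially verbatim: both rest on the convexity bound $|u-a|^q\le 2^{q-1}(|u|^q+|a|^q)$, the coercivity $c_1|u|^q-c_1^{-1}\le H(u)$ from \cref{eq:Hbd}, and the time-averaged Hamiltonian bound of \cref{prop:Hhbound}, followed by absorbing the constant and $|a|^q$ terms into $\braket{a}^2$.
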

\begin{proof}
  H\"older's inequality and \cref{eq:Hbd} imply that
  \begin{equation*}
    |s - a|^q \leq  2^{q - 1} \big(|s|^q + |a|^q\big) \leq 2^{q-1}\big(c_1^{-1}(H(s)+c_1^{-1}) + |a|^q\big)
  \end{equation*}
  for all $s \in \R$.
  Thus \cref{prop:Hhbound} implies that
  \begin{equation*}
    \frac{1}{t} \int_0^t \E |u(s, x) - a|^q \, \dif s \leq C \braket{a}^2
  \end{equation*}
  for some $C <\infty$ depending on $\kappa_0,c_1,c_2,\lambda$, and the law of $V$.
\end{proof}
The approach to the proof of \cref{prop:Hhbound} is similar to that used in \cite{Dun20,DGR21}, based on the Cole--Hopf transform.
There is an extra step, however.
The Cole--Hopf transform cannot be applied directly, so a comparison argument is needed.

We will
first consider the case $a = 0$.
Let $g$ be the solution to the Hamilton--Jacobi equation
\begin{equation}
  \label{eq:gPDE}
  \partial_{t}g =\kappa(\partial_{x}g)\partial_{x}^2g - H(\partial_{x}g)+V, \quad g(0-, \anon) = 0,
\end{equation}
constructed in \cref{prop:hamiltonjacobi}, so that $u=\partial_{x}g$. 
Let also $\lambda,c_2$ be as in \cref{eq:Hbd} and let $h$ solve  
\begin{equation}
  \label{eq:hPDE}
  \partial_{t}h =\kappa(\partial_{x}h)[\partial_{x}^2h - \lambda(\partial_{x}h)^{2}]+V-c_2, \quad h(0-, \anon) = 0.
\end{equation}
We have the following comparison. 
\begin{lem}
  \label{lem:gandhcomp}
  For all $t>0$ and $x\in\R$, we have
  \begin{equation}
    \label{eq:hbddbyg}
    h(t,x)\le g(t,x).
  \end{equation}
\end{lem}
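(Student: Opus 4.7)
The argument is an induction on the kick index, comparing $g$ and $h$ interval by interval. The base case at time $0-$ is trivial since $g \equiv h \equiv 0$. Suppose that $h(k+, \anon) \le g(k+, \anon)$ on $\R$ for some $k \in \Z_{\geq 0}$; I aim to propagate this inequality through the interval $(k, k+1]$.

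The central observation is that \cref{eq:Hbd} combined with \cref{eq:alliptic} yields
\begin{equation*}
  H(p) \le \lambda \kappa_0 p^2 + c_2 \le \lambda \kappa(p) p^2 + c_2 \quad \text{for all } p \in \R.
\end{equation*}
Since $V \equiv 0$ on the open interval $(k, k+1)$, both \cref{eq:gPDE,eq:hPDE} become autonomous there. Plugging $g$ into the differential operator governing $h$, we obtain
\begin{equation*}
  \partial_t g - \kappa(\partial_x g)\big[\partial_x^2 g - \lambda(\partial_x g)^2\big] + c_2
  = -H(\partial_x g) + \lambda \kappa(\partial_x g)(\partial_x g)^2 + c_2 \ge 0,
\end{equation*}
so $g$ is a classical supersolution of \cref{eq:hPDE} on the strip $(k, k+1) \times \R$, while $h$ is a solution with the correct ordering at time $k+$. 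The parabolic comparison principle then yields $g \ge h$ on $(k, k+1] \times \R$. Finally, the distributional $\delta$-spike in $V$ at $t = k+1$ adds the common spatially-stationary function $\mathsf{V}_{k+1}$ to both $g$ and $h$, so the ordering is preserved across the kick and the induction closes.

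The principal technical obstacle is justifying the comparison principle for the quasilinear operator in \cref{eq:hPDE} on the unbounded spatial domain $\R$, where $w \coloneqq g - h$ need not attain a global minimum. I would handle this by perturbing to
\begin{equation*}
  w_\eps(t, x) \coloneqq w(t, x) + \eps(1 + t)\p_{\ell'}(x),
\end{equation*}
with $\ell'$ chosen large enough that $\p_{\ell'}$ dominates the polynomial growth of $g$ and $h$ guaranteed by integrating the weighted bound \cref{sep2706} for $u = \partial_x g$ (and its analogue for $h$); then $w_\eps \to +\infty$ as $|x| \to \infty$ uniformly for $t \in [k, k+1]$, so any negative infimum is attained at an interior point. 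Linearizing the operator along a convex combination of $\partial_x g$ and $\partial_x h$ converts the difference equation into a linear parabolic inequality with locally bounded coefficients, and the classical parabolic maximum principle forbids an interior negative minimum; sending $\eps \to 0$ concludes. An equivalent alternative, more in the spirit of \cref{sec:wellposedness}, would be to adapt the weighted $L^1$-stability argument of \cref{prop-sep2402} by replacing the smooth approximation of $|\anon|$ by a smooth approximation of the negative part $(\anon)_-$.
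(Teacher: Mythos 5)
Your proof is correct and takes essentially the same route as the paper's: both exploit the inequality $H(p) \le \lambda\kappa(p)p^2 + c_2$ and the fact that gradient-dependent coefficients cancel at a contact point of $h$ and $g$, so that a parabolic maximum/comparison argument closes. The paper is terser — it works directly at a spatial maximum of $h-g$, leaves the unbounded-domain justification implicit, and does not spell out the kick-by-kick induction — whereas you supply the supersolution formulation, the weighted perturbation to localize the extremum, and the propagation across kicks, all of which are implicit in the paper's sketch.
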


\begin{proof}
  By \cref{eq:alliptic} and \cref{eq:Hbd}, we know that
  \begin{equation*}
    H(u) \leq \lambda \kappa(u) u^2 + c_2.
  \end{equation*}
  Moreover, at a spatial maximum of $h-g$, we have $\partial_{x}h=\partial_{x}g=u$ and $\partial_{x}^2(h-g)\le0$.
  Subtracting \cref{eq:gPDE} from \cref{eq:hPDE}, we therefore have
  \begin{equation*}
    \partial_{t}(h-g)=\kappa(u)[\partial_{x}^2(h-g)-\lambda u^{2}]-c_2+H(u)\le0
  \end{equation*}
  at a maximum of $h - g$.
  Since $h(0, \anon) - g(0, \anon) = 0$, \cref{eq:hbddbyg} follows.
\end{proof}

This comparison is useful because $h$ admits a Cole--Hopf transformation.
\begin{lem}
  \label{lem:jan2102}
  The field
  \begin{equation}
    \label{eq:phidef}
    \phi=\e^{-\lambda h}
  \end{equation}
  solves the sequence of Cauchy problems
  \begin{align}
    & \partial_{t}\phi  =\kappa\left(-\frac{\partial_{x}\phi}{\lambda\phi}\right)\partial_{x}^2\phi+\lambda c_2\phi, &  & t\in \R_+ \setminus \N,\, x\in\R;\label{eq:phiPDE} \\
    &\phi(k+,x)        =\e^{-\lambda\mathsf{V}_{k}(x)}\phi(k-,x),                                                &  & k\in\Z_{\geq 0}, \, x\in\R;\label{eq:phijumps}           \\
    &\phi(0-)          \equiv1.\nonumber
  \end{align}
\end{lem}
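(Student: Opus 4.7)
The plan is to reduce the claim to a direct chain-rule computation, using the Hamilton--Jacobi equation \cref{eq:hPDE} satisfied by $h$ and the interpretation of the kick noise $V(t,x)=\sum_{s\in\Z}\mathsf{V}_s(x)\delta(t-s)$. Between integer times $V\equiv 0$, so on the open intervals $(k,k+1)$ the field $h$ satisfies the classical parabolic PDE
\begin{equation*}
  \partial_t h = \kappa(\partial_x h)\bigl[\partial_x^2 h - \lambda(\partial_x h)^2\bigr] - c_2,
\end{equation*}
while at each integer $k\in\Z_{\ge 0}$ the distributional $\delta$-kick causes the jump $h(k+,x) - h(k-,x) = \mathsf{V}_k(x)$. (This jump description and the well-posedness/regularity of $h$ are provided by \cref{prop:hamiltonjacobi} together with \cref{thm-sep2702}, applied to each subinterval between consecutive kicks.) The initial condition $h(0-,\anon)\equiv 0$ immediately gives $\phi(0-,\anon)\equiv 1$.

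For the evolution on $(k,k+1)$, I would compute the derivatives of $\phi=\e^{-\lambda h}$ directly:
\begin{equation*}
  \partial_x\phi = -\lambda\phi\,\partial_x h,\qquad
  \partial_x^2\phi = -\lambda\phi\bigl[\partial_x^2 h - \lambda(\partial_x h)^2\bigr],\qquad
  \partial_t\phi = -\lambda\phi\,\partial_t h.
\end{equation*}
Since $\phi>0$, the first identity inverts to $\partial_x h = -\partial_x\phi/(\lambda\phi)$, and the second gives $\partial_x^2 h - \lambda(\partial_x h)^2 = -\partial_x^2\phi/(\lambda\phi)$. Substituting these into the PDE for $h$ and multiplying by $-\lambda\phi$ yields exactly \cref{eq:phiPDE}. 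The regularity established in the well-posedness theory ensures that all derivatives appearing above make classical sense on $(k,k+1)$, so this calculation is rigorous.

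For the jump relation \cref{eq:phijumps}, I simply exponentiate the jump of $h$:
\begin{equation*}
  \phi(k+,x) = \e^{-\lambda h(k+,x)} = \e^{-\lambda\mathsf{V}_k(x)}\e^{-\lambda h(k-,x)} = \e^{-\lambda\mathsf{V}_k(x)}\phi(k-,x).
\end{equation*}
Combined with $\phi(0-,\anon)\equiv 1$, this completes the verification. The only genuinely nontrivial point is the interpretation of the jump: once one unpacks the distributional equation \cref{eq:hPDE} and uses the fact that $\kappa(\partial_x h)[\partial_x^2 h - \lambda(\partial_x h)^2] - c_2$ is locally integrable in time near each kick, the jump formula for $h$ (and hence for $\phi$) follows by integrating across the kick. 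Everything else is a transparent algebraic manipulation via the Cole--Hopf change of variables; there is no substantive obstacle.
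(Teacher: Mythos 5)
Your proof is correct and matches the paper's argument: both apply the Cole--Hopf chain-rule identities $\partial_x\phi=-\lambda\phi\,\partial_x h$, $\partial_x^2\phi=\lambda^2\phi(\partial_x h)^2-\lambda\phi\,\partial_x^2 h$, substitute into \cref{eq:hPDE} on the inter-kick intervals, and exponentiate the jump $h(k+,\cdot)-h(k-,\cdot)=\mathsf V_k$ to get \cref{eq:phijumps}. The extra remarks on the distributional interpretation of the kick are sound but not a departure from the paper's proof.
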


\begin{proof}
  The PDE \cref{eq:phiPDE} comes from the classical Cole--Hopf transform;
  indeed, \cref{eq:phidef} implies
  \begin{equation*}
    \partial_{x}\phi=-\lambda\phi\partial_{x}h,\qquad\partial_{x}h=-\frac{\partial_{x}\phi}{\lambda\phi},\qquad\partial_{x}^2\phi=\lambda^{2}\phi(\partial_{x}h)^{2}-\lambda\phi\partial_{x}^2h
  \end{equation*}
  at non-integer times.
  Hence \cref{eq:hPDE} yields
  \begin{align*}
    \partial_{t}\phi & =-\lambda\phi\partial_{t}h=-\lambda\phi\left[\kappa(\partial_{x}h)[\partial_{x}^2h-\lambda(\partial_{x}h)^{2}]-c_2\right]=\kappa\left(-\frac{\partial_{x}\phi}{\lambda\phi}\right)\partial_{x}^2\phi+\lambda c_2\phi,
  \end{align*}
  as claimed.
  The multiplicative jump \cref{eq:phijumps} at integer times is immediate from \cref{eq:hPDE} and the definition of $V$.
\end{proof}

The next lemma gives an upper bound on the growth of a solution to
\cref{eq:phiPDE} in the deterministic case. 
\begin{lem}
  \label{lem:fkbd}
  There is a constant $C<+\infty$ depending only on $\kappa_0, \lambda$, and $c_2$, such that the following holds.
  Let $\phi$ solve the deterministic PDE \cref{eq:phiPDE} with
  \begin{equation}
    \label{eq:sub-inverse-Gaussian}
    0 \leq \phi(0, x) \leq K\e^{K x^\alpha}
  \end{equation}
  for some $K >0$, $\alpha<2$, and all $x \in \R$.
  For each $j\in\Z$, define
  \begin{equation*}
    \phi_j =\sup_{y\in[j,j+1]}\phi(0, y).
  \end{equation*}
  Then for any $t\in [0,1]$, we have
  \begin{equation}
    \label{eq:phibdintermsoff}
    \phi(t,x)\le C\sum_{j\in\Z}\phi_j\e^{-C^{-1}(x-j-1/2)^2}.
  \end{equation}
\end{lem}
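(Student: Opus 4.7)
The plan is to construct an explicit Gaussian supersolution and apply a parabolic comparison principle, following the supersolution idea acknowledged to Scott Armstrong in the introduction. First I observe that, since $\phi>0$, the coefficient
\begin{equation*}
A(t,x):=\kappa\!\left(-\frac{\partial_x\phi(t,x)}{\lambda\phi(t,x)}\right)
\end{equation*}
is a well-defined measurable function satisfying $\kappa_0\le A\le \kappa_0^{-1}$ by \cref{eq:alliptic}. Hence $\phi$ solves the linear parabolic equation
\begin{equation*}
\partial_t \phi = A(t,x)\,\partial_x^2\phi + \lambda c_2\phi
\end{equation*}
with bounded measurable diffusivity and linear zeroth-order term.

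Next I exhibit, for each $y\in\R$, the Gaussian profile
\begin{equation*}
G_y(t,x) := \e^{\lambda c_2 t}\exp\!\left(-\frac{c(x-y)^2}{1+t}\right),\qquad c := \kappa_0/4,
\end{equation*}
and verify by a direct computation that, for every $A\in[\kappa_0,\kappa_0^{-1}]$,
\begin{equation*}
\partial_t G_y - A\,\partial_x^2 G_y - \lambda c_2\, G_y
= \frac{G_y}{(1+t)^2}\left[c(1-4Ac)(x-y)^2 + 2Ac(1+t)\right] \ge 0,
\end{equation*}
where the sign uses $c = \kappa_0/4 \le 1/(4A)$. Thus $G_y$ is a classical supersolution of the linear equation above, and by linearity so is the superposition
\begin{equation*}
\psi(t,x) := \e^{c/4}\sum_{j\in\Z}\phi_j\,G_{j+1/2}(t,x).
\end{equation*}
For $x\in[j,j+1]$, $(x-j-1/2)^2\le 1/4$ gives $\psi(0,x) \ge \e^{c/4}\phi_j\,\e^{-c/4} = \phi_j \ge \phi(0,x)$.

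I then apply a parabolic comparison principle on $[0,1]\times\R$ to conclude $\phi \le \psi$. For $t\in[0,1]$, the bound $1+t\le 2$ yields $G_{j+1/2}(t,x) \le \e^{\lambda c_2}\exp\!\bigl(-c(x-j-1/2)^2/2\bigr)$, so \cref{eq:phibdintermsoff} holds with $C = \max\{\e^{c/4+\lambda c_2},\,2/c\}$, a constant depending only on $\kappa_0$, $\lambda$, and $c_2$.

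The main obstacle is justifying the comparison principle on the unbounded domain, since the standard maximum principle for linear parabolic equations requires a Tychonoff-type growth restriction (at most $\e^{\beta x^2}$) on both sub- and supersolutions. The hypothesis \cref{eq:sub-inverse-Gaussian} places $\phi(0,\cdot)$ strictly below a Gaussian; a short preliminary step—either propagating this growth by a weighted $L^\infty$-estimate analogous to \cref{prop-sep2306}, or by approximating $\phi$ on a sequence of tori $\T_L$ as in \cref{sec:wellposedness} and passing to the limit—places both $\phi$ and $\psi$ in the Tychonoff class on $[0,1]\times\R$, after which the classical comparison argument applies.
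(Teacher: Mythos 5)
Your proof is correct and takes essentially the same route as the paper: a family of Gaussian supersolutions centered at the half-integers, normalized so that the superposition dominates $\phi$ at $t=0$, followed by a comparison principle justified via the Tychonoff growth class that \cref{eq:sub-inverse-Gaussian} guarantees. The only cosmetic difference is that the paper's supersolution carries an extra $t^{-a}$ prefactor (with $a=\kappa_0^2/2$), which you correctly observed is unnecessary for the sign computation.
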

\begin{proof}
  By replacing $\phi$ with $\e^{-\lambda c_2t}\phi$, we can assume that $c_2=0$.
  Let $a=\kappa_0^2/2$ and $b=\kappa_0/4$, and
  define
  \begin{equation*}
    \psi(t,x) = t^{-a} \exp\left(-\frac{bx^2}{t}\right).
  \end{equation*}
  Then
  \begin{equation*}
    \partial_t\psi(t,x)=\big(-a+bx^2t^{-1}\big)t^{-a-1} \exp\big(-bt^{-1}x^2\big)
  \end{equation*}
  and
  \begin{equation*}
    \partial_x^2\psi(t,x) = \big(-2b +4b^2x^2t^{-1}\big) t^{-a-1}\exp\big(-bt^{-1}x^2\big).
  \end{equation*}
  Thus if $\kappa\in[\kappa_0,\kappa_0^{-1}]$, $t>0$, and $x\in\R$, we have
  \begin{equation}
    \label{eq:supersolncalc}
    \begin{aligned}
      (\partial_t-\kappa\partial_x^2)\psi(t,x) &=\big[(2\kappa b-a)+b(1-4b\kappa)x^2t^{-1}\big] t^{-a-1} \exp\big(-bt^{-1}x^2\big)\\
      &=\big[\kappa_0(\kappa -\kappa_0)/2+b(1-\kappa_0\kappa)x^2t^{-1}\big]t^{-a-1} \exp\big(-bt^{-1}x^2\big)\ge0
    \end{aligned}
  \end{equation}
  since $\kappa-\kappa_0\ge 0$ and $1-\kappa_0\kappa\ge 0$.

  Let us now set
  \begin{equation*}
    B = \inf_{x\in [-1/2,1/2]}\psi(1,x) > 0,
  \end{equation*}
  and, for a given $j\in\Z$, define
  \begin{equation*}
    \psi_{j}(t, x) = \phi_j B^{-1} \psi\big(t+1,x-(j+1/2)\big)
  \end{equation*}
  and
  \begin{equation*}
    \overline\phi(t,x) = \sum_{j\in\Z}\psi_j(t,x),
  \end{equation*}
  The sum  is finite because  \cref{eq:sub-inverse-Gaussian} implies that
  \begin{equation*}
    \phi_j \leq K' \e^{K' j^\alpha}
  \end{equation*}
  for some $K' \in \R_+$.
  By \cref{eq:supersolncalc} and linearity, $\overline\phi$ is a supersolution to \cref{eq:phiPDE}.
  Moreover, by construction we have $\overline\phi(0,\anon)\ge \phi(0, \anon)$.
  By \cref{eq:sub-inverse-Gaussian}, the unique solution $\phi$ of \cref{eq:phiPDE} that grows slower than an inverse Gaussian at spatial infinity
  satisfies the comparison principle. Thus, for all $(t, x)\in (0,1] \times \R$ we have
  \begin{align*}
    \phi(t,x)\le \overline\phi(t,x)&=\frac1B\sum_{j\in\Z}\phi_j\psi\big(t+1,x-(j+1/2)\big)
                                     \le \frac1B\sum_{j\in\Z}\phi_j \e^{-b(x-j-1/2)^2/2}.
  \end{align*}
  Now \cref{eq:phibdintermsoff} follows.
\end{proof}

Next, we obtain an upper bound on the expectation of the super-solution provided
by Lemmas~\ref{lem:gandhcomp} and~\ref{lem:jan2102}. 
\begin{lem}
  \label{lem:SHE-growth}There is a constant $C<\infty$, depending only on $\kappa_0,\lambda,c_2$, and the law of $V$, such that if $\phi$ solves \cref{eq:phiPDE} and \cref{eq:phijumps} with $\phi(0-, \anon) = 1$, then for all $t\ge0$ and $x\in\R$ we have
  \begin{equation}
    \label{eq:expbd}
    \E\phi(t-,x)\le \e^{C(t + 1)}.
  \end{equation}
\end{lem}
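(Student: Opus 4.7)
The plan is to iterate the supersolution bound of \cref{lem:fkbd} across the unit time intervals between kicks, and to absorb the multiplicative jumps using the exponential-moment hypothesis \cref{eq:vbarexpmoment} together with the independence of the kicks $\mathsf{V}_k$ across $k$.

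For each $k \in \Z_{\geq 0}$ and $j \in \Z$, I would set
\[
a_k(j) \coloneqq \sup_{y \in [j, j+1]} \phi(k-, y),
\]
so that $a_0(j) \equiv 1$. The jump relation \cref{eq:phijumps} together with the definition \cref{eq:Vmin} of $\overline{\mathsf{V}}_k$ gives the pointwise bound
\[
\sup_{y \in [j, j+1]} \phi(k+, y) \leq e^{-\lambda \overline{\mathsf{V}}_k(j)}\, a_k(j).
\]
Since $\partial_x \mathsf{V}_k \in \mathcal{X}_0$ almost surely by \cref{assu:V}, an integration yields $\mathsf{V}_k(x) = O(|x|^{1+\epsilon})$ for every $\epsilon > 0$, hence $\phi(k+, \cdot)$ grows more slowly than any inverse Gaussian almost surely. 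This justifies applying \cref{lem:fkbd} on the time interval $(k, k+1)$ to the PDE \cref{eq:phiPDE} with initial data $\phi(k+, \cdot)$. Taking the supremum of the resulting Gaussian bound over $x \in [m, m+1]$ yields a linear recursion
\[
a_{k+1}(m) \leq C \sum_{j \in \Z} e^{-\lambda \overline{\mathsf{V}}_k(j)}\, a_k(j)\, K(m - j),
\]
where $K(n) \coloneqq e^{-n^2/C'}$ is a summable kernel whose $\ell^1$ norm depends only on $\kappa_0$.

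Because $a_k(j)$ is measurable with respect to $\sigma(\mathsf{V}_0, \ldots, \mathsf{V}_{k-1})$ while $\overline{\mathsf{V}}_k(j)$ is a function of the independent kick $\mathsf{V}_k$, I can take expectations and factor to obtain
\[
\E a_{k+1}(m) \leq C M \sum_{j \in \Z} \E a_k(j) \, K(m - j),
\]
where $M \coloneqq \E e^{-\lambda \overline{\mathsf{V}}_0(0)}$ is finite by \cref{eq:vbarexpmoment} and independent of $j$ by spatial stationarity of $\mathsf{V}_k$. A straightforward induction on $k$, with $\E a_0 \equiv 1$, then yields $\E a_k(m) \leq (C M \sum_n K(n))^k$, which is exponential in $k$. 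For an intermediate time $t \in [k, k+1)$, I would repeat the argument once more on the shorter interval $(k, t)$ to obtain
\[
\E \phi(t-, x) \leq C \sum_{j \in \Z} M \cdot \E a_k(j) \cdot e^{-C^{-1}(x - j - 1/2)^2},
\]
which after summation gives \cref{eq:expbd} with an appropriate constant.

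The main subtlety is verifying that \cref{lem:fkbd} can legitimately be invoked at each iteration, i.e.\ that the initial data $\phi(k+, \cdot)$ satisfies the sub-inverse-Gaussian growth condition \cref{eq:sub-inverse-Gaussian} almost surely. This is inherited from the sub-polynomial growth of $\mathsf{V}_k$ afforded by \cref{assu:V}, but requires careful handling since the bound must be uniform enough in $k$ that the subsequent constants do not accumulate superexponentially. The independence of the $\mathsf{V}_k$ across $k$ is essential in the expectation step: without it the product of random weights accumulated through iteration could not be replaced by the product of their expectations, and a H\"older-type estimate would deteriorate with $k$.
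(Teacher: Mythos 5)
Your proposal is correct and follows essentially the same path as the paper's own proof: both track $\sup_{[j,j+1]}\phi$ by unit intervals, bound the jump via $\overline{\mathsf{V}}_k$ and the exponential-moment hypothesis, apply \cref{lem:fkbd} on each unit interval, factor expectations using the independence of the kicks, and close by induction while propagating the almost-sure sub-inverse-Gaussian bound needed to re-invoke \cref{lem:fkbd}. The only cosmetic difference is that you write out the Gaussian-kernel recursion for $\E a_k(j)$ explicitly, whereas the paper keeps the induction in the form $\sup_j\E\Phi_j(t)\le\e^{C(t+1)}$.
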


\begin{proof}
%
The proof relies on \cref{lem:fkbd} and induction.
Recall the definition \cref{eq:Vmin} of $\mathsf{\overline{V}}_s(x)$ from \cref{assu:V}. 
Define
  \begin{equation}
    C_1 = \E \e^{-\lambda \overline{\mathsf{V}}_s(j)},\label{eq:noisexpmoment}
  \end{equation}
  which is finite by \cref{eq:vbarexpmoment} of \cref{assu:V}.
For $j\in\Z$, define
  \begin{equation*}
    \Phi_j(t) = \sup_{x\in [j,j+1]}\phi(t,x).
  \end{equation*}
As our inductive hypothesis, we assume that (for a constant $C$ not depending on $t$)
\begin{equation}
\sup_{j\in\Z} \E\Phi_j(t)\le \e^{C(t+1)}\label{eq:IH-1}
\end{equation}
and that, with probability $1$, there is some $K<\infty$ and $\alpha<2$ (possibly depending on $t$) such that
\begin{equation}
 0\le \phi(t-,x)\le Ke^{Kx^\alpha}\qquad\text{for all }x\in\R.\label{eq:IH-2}
\end{equation}
This is certainly true at $t=0$; we assume it is true for $t$ and try to prove it for $t+1$.
By \cref{eq:noisexpmoment,eq:IH-1}, we have
\begin{equation}
\sup_{j\in\R} \E\Phi_j(t+)\le C_1\e^{C(t+1)},\label{eq:IH-1-again}
\end{equation}
and by the assumption (in \cref{assu:V}) that $\partial_x \mathsf{V}_t\in \mathcal{X}_0$ almost surely, we see that \cref{eq:IH-2} continues to hold (with possibly new values of $K$ and $\alpha$) when $t-$ is replaced by $t+$.
Therefore, the hypotheses of \cref{lem:fkbd} apply, so by \cref{eq:phibdintermsoff}, we see that \cref{eq:IH-2} continues to hold with $t-$ replaced by $(t+1)-$, and using \cref{eq:IH-1-again} and taking expectations in \cref{eq:phibdintermsoff}, we see that
\[
 \sup_{j\in\R} \E\Phi_j((t+1)-)\le C_2\e^{C(t+1)}\le \e^{C(t+2)}
\]
for a new constant $C_2$, with the last inequality as long as $C\ge\log C_2$. This completes the induction and thus the proof.
\end{proof}

\subsubsection*{Proof of \cref{prop:Hhbound}}

To reduce to the case $a=0$, let us define
\begin{equation*}
  \tilde{u}(t, x) \coloneqq u(t, x + 2 \lambda \kappa_0 a t) - a
  \quad \text{and} \quad
  \tilde{V}(t, x) \coloneqq V(t, x + 2 \lambda a t)
\end{equation*}
as well as $\tilde{\kappa}(s) \coloneqq \kappa(s + a)$ and
\begin{equation}
  \label{eq:new-Hamiltonian}
  \tilde{H}(s) \coloneqq H(s + a) - 2\lambda \kappa_0 a s - \lambda \kappa_0 a^2 - c_2.
\end{equation}
By space-stationarity and the independence of $V$ at different times, we know that $\tilde{V} \overset{\mathrm{law}}{=} V$.
The function $\tilde u$ satisfies 
\begin{equation}
  \label{eq:new-PDE}
  \partial_t \tilde{u} = \partial_x\big[\tilde{\kappa}(\tilde{u})\partial_x \tilde{u} - \tilde{H}(\tilde{u}) + \tilde{V}\big], \quad \tilde{u}(0-,\anon) = 0.
\end{equation}
We can use \cref{prop:hamiltonjacobi} to construct a solution $\tilde{g}$ to the Hamilton--Jacobi equation
\begin{equation*}
  \partial_t \tilde{g} = \tilde{\kappa}(\partial_x \tilde{g})\partial_x^2 \tilde{g} - \tilde{H}(\partial_x \tilde{g}) + \tilde{V}, \quad \tilde{g}(0-, \anon) = 0,
\end{equation*}
such that $\partial_x \tilde{g} = \tilde{u}$.
Note that \cref{eq:Hbd,eq:new-Hamiltonian} imply that $\tilde{H}(s) \leq \lambda \kappa_0 s^2.$
Thus, if $h$ solves~\cref{eq:hPDE} with $c_2 = 0$, \cref{lem:gandhcomp} yields $\tilde{g} \geq h$.
Drawing on \cref{lem:SHE-growth}, we find that
\begin{equation}
  \label{eq:g-moment}
  \begin{aligned}
    -\frac{C}{\lambda}(t + 1) &\overset{\cref{eq:expbd}}{\le}-\frac{1}{\lambda}\log\E\phi(t-,x)\\
    &\overset{\phantom{\cref{eq:expbd}}}{\le}\frac{1}{\lambda}\E(-\log\phi(t-,x))\overset{\cref{eq:phidef}}{=}\E h(t-,x)\overset{\cref{eq:hbddbyg}}{\le}\E\tilde{g}(t-,x),
  \end{aligned}
\end{equation}
for all $t > 0$ and $x \in \R$.
We use \cref{eq:gPDE} to write
\begin{equation*}
  \begin{aligned}
    \tilde{g}(t-,x) &= \int_0^t(\partial_{t}\tilde{g})(s,x)\,\dif s + \tilde{\mathsf{V}}_0(x)\\
    &= \int_{0}^{t}[\tilde{\kappa}(\tilde{u}(s,x))\partial_{x}\tilde{u}(s,x)-\tilde{H}(\tilde{u}(s,x))]\,\dif s + \sum_{s = 0}^{\lceil t \rceil - 1}\tilde{\mathsf{V}}_s(x).
  \end{aligned}
\end{equation*}
Here, we have  used the notation $\tilde{\mathsf{V}}_s(x) \coloneqq \mathsf{V}_s(s + 2\lambda a s)$.
This can be re-written as
\begin{equation*}
  \int_{0}^{t} \partial_{x}[\tilde{\mathcal{K}}(\tilde{u}(s, x))] \,\dif s = \tilde{g}(t-, x) + \int_{0}^{t}\tilde{H}(\tilde{u}(s,x)) \,\dif s - \sum_{s = 0}^{\lceil t \rceil - 1}\tilde{\mathsf{V}}_s(x)
\end{equation*}
for all $t > 0$ and $x \in \R$. Here, we have set
\begin{equation*}
  \tilde{\mathcal{K}}(s) \coloneqq \int_0^s \tilde{\kappa}(r) \, \dif r.
\end{equation*}
Integrating over $(0, L)$ in space for some $L > 0$, we find
\begin{equation}
  \label{eq:K-stationary}
  \int_{0}^{t} \tilde{\mathcal{K}}(\tilde{u}(s, x))\Big|_{x = 0}^{x = L} \,\dif s =
  \int_0^L \Big[\tilde{g}(t-, x) + \int_{0}^{t} \tilde{H}(\tilde{u}(s,x)) \,\dif s - \sum_{s = 0}^{\lceil t \rceil - 1}\tilde{\mathsf{V}}_s(x)\Big] \dif x.
\end{equation}
By \cref{eq:Hbd,eq:g-moment}, the negative part of the right side is absolutely integrable over $\Omega$.
Hence the same is true of the left side.
By \cite[Lemma~D.1]{DGR21} and spatial stationarity, the left side is absolutely integrable over $\Omega$ and has zero expectation.
Taking expectation in \cref{eq:K-stationary} and rearranging, spatial stationarity allows us to remove the spatial integral:
\begin{equation*}
  \E \int_{0}^{t} \tilde{H}(\tilde{u}(s,x)) \,\dif s = -\E \tilde{g}(t-, x) + \sum_{s = 0}^{\lceil t \rceil - 1} \E \tilde{\mathsf{V}}_s(x),
\end{equation*}
for all $t > 0$ and $x \in \R$.
Note that \cref{eq:Hbd} and Fubini--Tonelli allow us to exchange the expectation and integral on the left side.
Using \cref{eq:g-moment} and the stationarity of the family $\mathsf{V}$, we find
\begin{equation*}
  \int_{0}^{t} \E \tilde{H}(\tilde{u}(s,x)) \,\dif s \leq \frac{C}{\lambda}(t + 1) + \lceil t \rceil \E \mathsf{V}_0(0).
\end{equation*}
Therefore, we have 
\begin{equation}
  \label{eq:new-Hamiltonian-bd}
  \frac{1}{t}\int_{0}^{t} \E \tilde{H}(\tilde{u}) \leq C
\end{equation}
for all $t \geq 1$ and some constant $C<+\infty$  depending only on $\kappa_0,\lambda, c_1, c_2$, and the law of $V$.

To verify \cref{eq:time-ave-mean}, we integrate \cref{eq:new-PDE} in spacetime.
Given $L > 0$ and $t > 0$, we have
\begin{equation}
  \label{eq:new-mean-prelim}
  \int_{[0, L]} \tilde{u}(t-, x) \, \dif x = \left[\int_0^t (\tilde{\kappa}(\tilde{u}) \partial_x \tilde{u} - \tilde{H}(\tilde{u})) \, \dif s + \sum_{s = 0}^{\lceil t \rceil - 1} \tilde{\mathsf{V}}_s\right] \, \Bigg|_{x = 0}^{x = L}.
\end{equation}
Now, \cref{eq:Hbd} and \cref{eq:new-Hamiltonian} imply that $\tilde{H}(s) \gg |s|$ when $|s| \gg 1$.
Thus \cref{eq:new-Hamiltonian-bd} implies that $\tilde{u}(t-, x)$ has a first moment for all $x \in \R$ and almost every $t > 0$.
Thus for almost every $t > 0$, we can apply \cite[Lemma~D.1]{DGR21} to \cref{eq:new-mean-prelim} to conclude that
\begin{equation}
  \label{eq:new-mean}
  \E \tilde{u}(t-, x) = 0.
\end{equation}
This implies \cref{eq:time-ave-mean}.

We can now control $H(u)$.
Combining \cref{eq:new-Hamiltonian}, \cref{eq:new-Hamiltonian-bd}, and \cref{eq:new-mean} and using space-stationarity, we find
\begin{equation*}
  \frac{1}{t} \int_0^t \E H(u)\, \dif s = \frac{1}{t} \int_0^t \E \left[\tilde{H}(\tilde{u}) + 2 \lambda \kappa_0 a \tilde{u} + \lambda \kappa_0 a^2 + c_2\right] \dif s \leq C \braket{a}^2
\end{equation*}
for all $t \geq 1$. This completes the proof of Proposition~\ref{prop:Hhbound}.
\hfill$\Box$

\subsection{Existence of spacetime-stationary solutions\label{subsec:existenceofinvariantmeasures}}

We are now ready to prove \cref{thm:existence}.
\begin{proof}[Proof of \cref{thm:existence}.]
  Fix $a \in \R$ and let $u$ solve \cref{eq:uPDE} with initial condition $u(0-, \anon) \equiv a$.
  Let
  \begin{equation*}
    \mu_{t}=\Law(u(t))\otimes\delta_{\overline{t}},
  \end{equation*}
  where $\delta_{\overline{t}}$ is a delta mass at $\overline{t}\in\R/\Z$.
  Then $\mu_{t}$ is a probability measure on $\mathcal{A}_{1}$.
  Given $t \geq 1$, define
  \begin{equation*}
    \overline{\mu}_{t}=\frac{1}{t}\int_{0}^{t}\mu_{s}\,\dif s.
  \end{equation*}

  We claim that $(\overline{\mu}_{t})_{t\ge 1}$ is tight with respect to the topology of $\mathcal{X}_{\frac{2}{2 + q}} \times (\R/\Z)$.
  To see this, take $(v,\theta)\sim\overline{\mu}_{t}$ for fixed $t \geq 1$.
  By \cref{lem:derivbd}, \cref{prop:Hhbound},
  and \cref{cor:Hqbound}, there is a constant $C <+\infty$ depending on $\kappa_0,c_1,c_2,\lambda$, and the law of $V$ but  independent of $t$
  such that
  \begin{equation}
    \label{eq:v-bound}
    \E H(v(x)), \, \E |v(x) - a|^{q} \leq C \braket{a}^2 \quad \text{and} \quad \E|\partial_{x}v(x)|^{2} \le C \quad \text{for all } x \in \R.
  \end{equation}
  In the remainder of the proof, we allow $C$ to change from line to line provided it continues to depend only on $\kappa_0,c_1,c_2,\lambda$, and the law of $V$.
  Fix $\alpha\ge1$ and $\beta \in (0, 1)$ to be chosen later.
  Given $j\in\Z$,
  let $c_{j}=\sgn(j)\langle j\rangle^{\alpha}$. We have
  \begin{equation*}
    \|v\|_{\mathcal{C}_{\p_{\beta}}}=\adjustlimits\sup_{j\in\Z}\sup_{x\in[c_{j},c_{j+1}]}\frac{|v(x)|}{\langle x\rangle^{\beta}}\le \sup_{j\in\Z}\frac{|v(c_{j})|}{\langle c_{j}\rangle^{\beta}}+\sup_{j\in\Z}\frac{\int_{c_{j}}^{c_{j+1}}|\partial_{x}v(x)|\,\dif x}{\langle c_{j}\rangle^{\beta}}.
  \end{equation*}
  Note that \cref{eq:v-bound} yields
  \begin{equation*}
    \E\Big(\sup_{j\in\Z}\frac{|v(c_{j})|}{\langle c_{j}\rangle^{\beta}}\Big)^{q}\le\sum_{j\in\Z}\frac{\E|v(c_{j})|^{q}}{\langle c_{j}\rangle^{\beta q}}\le C\braket{a}^2\sum_{j\in\Z}\langle j\rangle^{-\alpha\beta q},
  \end{equation*}
  which is finite provided
  \begin{equation}
    \label{jan2402}
    \beta>\frac{1}{\alpha q}.
  \end{equation}
  Similarly, \cref{eq:v-bound} implies
  \begin{align*}
    \E\Big(\sup_{j\in\Z}\frac{\int_{c_{j}}^{c_{j+1}}|\partial_{x}v(x)|\,\dif x}{\langle c_{j}\rangle^{\beta}}\Big)^{2} & \le
                                                                                                                         \sum_{j\in\Z}\frac{\E\left(\int_{c_{j}}^{c_{j+1}}|\partial_{x}v(x)|\,\dif x\right)^{2}}{\langle c_{j}\rangle^{2\beta}}     \\
                                                                                                                       & \le\sum_{j\in\Z}\frac{(c_{j+1}-c_{j})\int_{c_{j}}^{c_{j+1}}\E|\partial_{x}v(x)|^{2}\,\dif x}{\langle c_{j}\rangle^{2\beta}} 
                                                                                                                         \le C\sum_{j\in\Z}\frac{(c_{j+1}-c_{j})^{2}}{\langle c_{j}\rangle^{2\beta}}  \\ &                                                      
                                                                                                                                                                                                           \le C\sum_{j\in\Z}\langle j\rangle^{2[\alpha-1-\alpha\beta]},
  \end{align*}
  which is finite when $2[\alpha-1-\alpha\beta]<-1$,
  that is,
  \begin{equation}
    \label{jan2404}
    \beta>1-{1}/{(2\alpha)}.
  \end{equation}
  Taking $\alpha= {1}/{2}+{1}/{q}$,
  we see that if $\beta>{2}/{(2+q)}$, then both \cref{jan2402} and \cref{jan2404} hold, so that
  \begin{equation}
    \label{eq:Cpbetabound}
    \E\|v\|_{\mathcal{C}_{\p_{\beta}}}^{q}\le C\braket{a}^2.
  \end{equation}
  For instance, we can take $\beta = 5/6$.

  The next step is to control the H\"older regularity of $v$.
  Given $\gamma\in(0,1/2]$, we have
  \begin{align*}
    \E\Bigg(\sup_{\substack{|x-y|\le1 \\ x \neq y}}\frac{|v(x)-v(y)|}{\langle x\rangle^{\beta}|x-y|^{\gamma}}\Bigg)^{2} & \le\sum_{j\in\Z}\E\Bigg(\sup_{\substack{x, y\in[j-1,j+1]\\x \neq y}}\frac{|v(x)-v(y)|^{2}}{\langle x\rangle^{2\beta}|x-y|^{2\gamma}}\Bigg) \\
                                                                                                                        & \le 2 \sum_{j\in\Z}\langle j\rangle^{-2\beta}\int_{j-1}^{j+1}\E|\partial_{x}v(x)|^{2}\,\dif x
                                                                                                                          \le C\sum_{j\in\Z}\langle j\rangle^{-2\beta}.
  \end{align*}
  The last sum is finite since $\beta>1/2$.
  In light of \cref{eq:Cpbetabound}, and since $1<q\le 2$, we see that
  \begin{equation}
    \label{eq:Cgammapbbd}
    \E\|v\|_{\mathcal{C}_{\p_{\beta}}^{\gamma}}^{q}\le C\braket{a}^2.
  \end{equation}

  As $\beta > {2}/{(2 + q)}$ and $\gamma > 0$, Proposition~B.2 of \cite{DGR21} ensures that the embedding
  \begin{equation*}
    \mathcal{C}_{\p_{\beta}}^{\gamma}\hookrightarrow\mathcal{X}_{\frac{2}{2+q}} = \mathcal{X}
  \end{equation*}
  is compact.
  Therefore \cref{eq:Cgammapbbd} and the compactness of $\R/\Z$ imply that the sequence $(\overline{\mu}_{t})_{t\ge0}$ of measures on $\mathcal{A}_1 = \mathcal{X} \times (\R/\Z)$ is tight.
  By Prokhorov's theorem, there exists a weak subsequential limit $\overline{\mu}$ of $(\overline{\mu}_{t})_{t\ge 1}$ as $t \to \infty$.
  Using the Feller property from \cref{cor:feller}, a standard Krylov--Bogoliubov argument shows that $\overline{\mu}$ is invariant for the semigroup $(\mathcal{P}_{t})_{t\ge0}$.
  Also, $\overline{\mu}$ is certainly invariant under spatial translations, so $\overline{\mu}\in\overline{\mathscr{P}}_{\R}(\mathcal{A}_{1})$.
  If $(\overline{v}, \overline{\theta}) \sim \overline{\mu}$, then, by~\cref{eq:v-bound}, we have
  \begin{equation}
    \label{eq:limit-moment}
    \E H(v), \, \E |\overline{v} - a|^q \leq C \braket{a}^2 \quad \text{and} \quad \E |\partial_x v|^2 \leq C,
  \end{equation}
  where we implicitly evaluate $v$ at some $x \in \R$.
  Moreover, the uniform integrability implicit in \cref{eq:v-bound} yields
  \begin{equation}
    \label{eq:limit-mean}
    \E \overline{v} = a.
  \end{equation}

  We now write $\overline{\mu}$ as a convex combination of extremal measures.
  Fix $G = \R$ or $L \Z$ for some~$L > 0$ and let $X \sim \Uniform(\R/G)$.
  Because $\overline{\mu}$ is $\R$-invariant, it is also $G$-invariant.
  In~\cite[Theorem~4.4]{Varadarajan},  it is shown  that $\bar{\mu}$ corresponds to a probability measure $m(\dif \mu)$ supported 
  on~$\overline{\mathscr{P}}_{G}^{\mathrm{e}}(\mathcal{A}_{1})$ such that
  \begin{equation}
    \label{eq:ergodic-decomp}
    \overline{\mu}(A) = \int_{\overline{\mathscr{P}}_{G}^{\mathrm{e}}(\mathcal{A}_{1})} \mu(A) \, m(\dif \mu)
  \end{equation}
  for each Borel set $A \subset \mathcal{A}_1$.
  Strictly speaking, \cite{Varadarajan} only treats deterministic dynamical systems.
  However, as noted in \cref{rem:ergodic}, we can convert our Markov semigroup to a deterministic dynamical system following \cite[Section 4]{H08}.
  
  Now if $(v[\mu], \theta[\mu]) \sim \mu$ for each $\mu \in \overline{\mathscr{P}}_{G}^{\mathrm{e}}(\mathcal{A}_{1})$, then \cref{eq:limit-moment,eq:ergodic-decomp} imply that $m$ is supported on measures $\mu$ such that
  \begin{equation}
    \label{eq:finite-moment}
    \E H(v[\mu](X)), \, \E |v[\mu](X) - a|^q, \, \E |\partial_x v[\mu](X)|^2 < \infty.
  \end{equation}
  In particular, $m$ is supported on measures with well-defined first moments.
  By the H\"older inequality,~\cref{eq:limit-moment,eq:ergodic-decomp} yield
  \begin{equation}
    \label{eq:m-moment-verbose}
    \int_{\overline{\mathscr{P}}_{G}^{\mathrm{e}}(\mathcal{A}_{1})} |\E v[\mu](X) - a|^q \, m(\dif \mu) \leq \int_{\overline{\mathscr{P}}_{G}^{\mathrm{e}}(\mathcal{A}_{1})} \E |v[\mu](X) - a|^q \, m(\dif \mu) \leq C \braket{a}^2.
  \end{equation}
  Also, we can write \cref{eq:limit-mean} as
  \begin{equation}
    \label{eq:m-mean-verbose}
    \int_{\overline{\mathscr{P}}_{G}^{\mathrm{e}}(\mathcal{A}_{1})} \E v[\mu](X) \, m(\dif \mu) = a.
  \end{equation}
  If $\mu \sim m$, let $\xi \coloneqq \E[v[\mu](X) \mid \mu] - a$.
  Then \cref{eq:m-moment-verbose,eq:m-mean-verbose} become 
  \begin{equation}
    \label{eq:m-moment}
    \E |\xi|^q \leq C \braket{a}^2
  \end{equation}
  and
  \begin{equation}
    \label{eq:m-mean}
    \E \, \xi = 0.
  \end{equation}
  Let $\mathsf{m} \coloneqq \Law \xi$, and suppose
  \begin{equation*}
    \supp \mathsf{m} \cap [-\braket{a}/2, A] = \emptyset
  \end{equation*}
  for some $A \geq \braket{a}/2$.
  Then we can use \cref{eq:m-mean} to write
  \begin{equation*}
    0 = \int_{\R} \xi \, \mathsf{m}(\dif x) = -\int_{-\infty}^{-\braket{a}/2} |\xi| \, \mathsf{m}(\dif x) + \int_A^\infty |\xi| \, \mathsf{m}(\dif x).
  \end{equation*}
  Therefore, we have
  \begin{equation*}
    \int_A^\infty |\xi| \, \mathsf{m}(\dif x) = \frac{1}{2} \E |\xi| \geq \frac{\braket{a}}{2}.
  \end{equation*}
  Using \cref{eq:m-moment}, this implies:
  \begin{equation*}
    \frac{\braket{a}}{2} \leq \int_A^\infty \xi \, \mathsf{m}(\dif x) \leq A^{1- q} \int_A^\infty \xi^q \, \mathsf{m}(\dif x) \leq A^{1- q} \E |\xi|^q \leq C A^{1-q} \braket{a}^2.
  \end{equation*}
  Rearranging, we find
  \begin{equation*}
    A \leq \frac{1}{2}\bar{C} \braket{a}^{\frac{1}{q - 1}},
  \end{equation*}
  with some $\bar{C} $ depending only on $\kappa_0,\lambda,c_1,c_2$, and $\Law V$.
  It follows that
  \begin{equation*}
    \mathsf{m}\Big(\Big[-\frac{\braket{a}}{2}, \bar{C} \braket{a}^{1/(q - 1)}\Big]\Big) > 0.
  \end{equation*}
  In light of the definition of $\xi$ and \cref{eq:finite-moment}, there exists $\mu_{a,G} \in \overline{\mathscr{P}}_{G}^{\mathrm{e}}(\mathcal{A}_{1})$ such that
  \begin{equation*}
    \E H(v[\mu_{a,G}](X)), \, \E |\partial_x v[\mu_{a,G}](X)|^2 < \infty
  \end{equation*}
  and
  \begin{equation*}
    -\braket{a}/2 \leq \E v[\mu_{a,G}](X) - a \leq \bar{C}\braket{a}^{\frac{1}{q - 1}}.
  \end{equation*}
  This completes the proof of \cref{thm:existence}.
\end{proof}

\section{Stochastic ordering of the invariant measures}
\label{sec:uniqueness}

In this section we prove \cref{thm:stochordering}. First, we show that
a coupling satisfying \cref{eq:isacoupling} exists.
\begin{prop}
  \label{prop:cancouple}
  Suppose that $\mu_{i} \in \overline{\mathscr{P}}_G(\mathcal{A}_{N_i})$ with $i \in \{1, 2\}$ satisfy the hypotheses of \cref{thm:stochordering}.
  Then there exists $\mu\in\overline{\mathscr{P}}_{G}(\mathcal{A}_{N_{1}+N_{2}})$ satisfying \cref{eq:isacoupling}.
\end{prop}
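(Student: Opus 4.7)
The plan is a Krylov--Bogoliubov averaging starting from a natural coupling that shares the time coordinate. Since $\mu_i \in \overline{\mathscr{P}}(\mathcal{A}_{N_i})$, the observation preceding \cref{eq:PGbar} shows that the $(\R/\Z)$-marginal of each $\mu_i$ is uniform. Disintegrating $\mu_i$ with respect to this marginal yields conditional laws $\mu_i^\theta \in \mathscr{P}(\mathcal{X}^{N_i})$, and I define an initial measure $\mu_0 \in \mathscr{P}(\mathcal{A}_{N_1 + N_2})$ by sampling $\theta \sim \Uniform(\R/\Z)$ and then, conditionally on $\theta$, drawing $\mathbf{v}_1 \sim \mu_1^\theta$ and $\mathbf{v}_2 \sim \mu_2^\theta$ independently. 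By construction, the marginal of $\mu_0$ on $\mathcal{A}_{N_i}$ equals $\mu_i$. The $G$-invariance of each $\mu_i$ propagates to $\mu_i^\theta$ for almost every $\theta$, so $\mu_0$ is itself $G$-invariant on $\mathcal{A}_{N_1 + N_2}$.

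Next, I form the Ces\`aro averages
\begin{equation*}
  \overline{\mu}_t \coloneqq \frac{1}{t}\int_0^t \mathcal{P}_s^* \mu_0 \, \dif s, \quad t \geq 1,
\end{equation*}
and observe two conservation properties of $\mathcal{P}_s^*$. First, $\mathcal{P}_s$ acts diagonally: in the definition \cref{eq:Ptdef} every coordinate evolves under the same realization of $V$, so restricting the joint evolution to any block of coordinates recovers the single-ensemble evolution. Consequently the $\mathcal{A}_{N_i}$-marginal of $\mathcal{P}_s^* \mu_0$ equals $\mathcal{P}_s^* \mu_i = \mu_i$. Second, the spatial stationarity of $V$ gives the equivariance $\mathcal{P}_s^*(g \cdot \nu) = g \cdot \mathcal{P}_s^* \nu$ for all $g \in G$, so $\mathcal{P}_s^* \mu_0$ remains $G$-invariant. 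Both properties descend to $\overline{\mu}_t$ for every $t \geq 1$.

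Tightness of $(\overline{\mu}_t)_{t \geq 1}$ on $\mathcal{A}_{N_1 + N_2}$ follows from the standard product fact that joint tightness is equivalent to coordinate tightness, since the coordinate marginal families are the constant sequences $\mu_1$ and $\mu_2$. Extracting a weak subsequential limit $\mu$, the Feller property of \cref{cor:feller} combined with the usual Krylov--Bogoliubov argument yields $\mathcal{P}_t^* \mu = \mu$ for all $t \geq 0$. The identification of the marginals and the $G$-invariance both pass to $\mu$ by continuity of projection and of the $G$-action under weak convergence, so $\mu \in \overline{\mathscr{P}}_G(\mathcal{A}_{N_1 + N_2})$ satisfies \cref{eq:isacoupling}. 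The main point requiring care is the first conservation property above, which critically exploits both the diagonal action of $\mathcal{P}_s$ and the time-invariance of the individual $\mu_i$; without the latter, the marginal would merely be $\mathcal{P}_s^* \mu_i$ rather than $\mu_i$, and the coupling structure would drift with $s$.
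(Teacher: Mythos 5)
Your proof is correct and follows essentially the same route as the paper's: identify the common uniform $\R/\Z$-marginal, build a coupling $\mu_0$ with the correct block marginals, and apply a Krylov--Bogoliubov averaging argument (using the Feller property and the invariance of each $\mu_i$ to preserve the marginals). The only difference is cosmetic — you make the initial coupling $\mu_0$ explicit via conditional independence given $\theta$, whereas the paper simply asserts that such a coupling exists.
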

\begin{rem}
  Note that in this part of the proof of \cref{thm:stochordering}, we do not assume that $\mu_i$ is extremal.
\end{rem}
\begin{proof}
  Because each $\mu_i$ is invariant under the semigroup $\{\mathcal{P}_t\}_{t \geq 0}$, their marginals on $\R/\Z$ are uniform for each $i \in \{1, 2\}$,
  and are, therefore, identical.
  It follows that there exists a coupling $\mu_0 \in\mathscr{P}_{G}(\mathcal{A}_{N_{1}+N_{2}})$ such that if
  \begin{equation*}
    \big((v_{1;1},\ldots,v_{1;N_{1}},v_{2;1},\ldots,v_{2;N_{2}}),\theta\big) \sim \mu_{0},
  \end{equation*}
  then
  \begin{equation*}
    \Law\big((v_{i;1},\ldots,v_{i;N_{i}}),\theta\big)=\mu_{i} \quad \text{for each } i \in \{1, 2\}.
  \end{equation*}
  Deploying the Krylov--Bogoliubov-type argument in the proof of \cite[Proposition 4.3]{DGR21}, there is a sequence of times $T_k\nearrow\infty$ such that the limit
  \begin{equation*}
    \mu=\lim_{k\to\infty}\frac{1}{T_k}\int_{0}^{T_k}\mathcal{P}_{t}^{*}\mu_{0}\,\dif t
  \end{equation*}
  exists and is an element of $\overline{\mathscr{P}}_{G}(\mathcal{A}_{N_{1}+N_{2}})$.
  Moreover, the invariance of $\mu_i$ implies that $\mu$ satisfies \cref{eq:isacoupling}.
\end{proof}
Next, we show that the components of a time-invariant solution are ordered.
\begin{prop}
  \label{prop:ordering}
  Let the group $G$ be $\R$ or $L\Z$ for some ${L>0}$ and let $X\sim\Uniform(\R/G)$.
  Suppose that $\mu\in\overline{\mathscr{P}}_{G}(\mathcal{A}_{N})$ and $(v_{1},\ldots,v_{N},\theta)\sim\mu$ satisfy
  \begin{equation}
    \label{eq:momentcondition}
    \E H(v_{i}(X)),\,\E(\partial_{x}v_{i}(X))^{2}<\infty \quad \text{for all } i \in \{1,\ldots,N\}.
  \end{equation}
  Then for each $i,j\in\{1,\ldots,N\}$, $\sgn\big(v_i(x)-v_j(x)\big)$ is almost surely a random constant independent of $x \in \R$.
\end{prop}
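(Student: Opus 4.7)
The plan is to argue via an $L^{1}$-contraction identity combined with the joint space-time stationarity of the coupled evolution. Fix $i, j \in \{1, \ldots, N\}$ and consider $(v_{1}, \ldots, v_{N})$ evolving from $(\mathbf{v}, \theta) \sim \mu$ under a single realization of the noise. Set $w(t, x) \coloneqq v_{i}(t, x) - v_{j}(t, x)$. Two observations are key. First, at each integer time $k$ both $v_{i}$ and $v_{j}$ receive the same kick $\partial_{x}\mathsf{V}_{k}$, so $w(\cdot, x)$ is continuous across kicks. Second, between kicks $w$ solves the linear parabolic equation
\[
  \partial_{t}w = \partial_{x}\!\left[a(t, x)\, \partial_{x}w + d(t, x)\, w\right],
\]
with $a(t, x) = \int_{0}^{1}\kappa(v_{j} + \tau w)\, \dif\tau \in [\kappa_{0}, \kappa_{0}^{-1}]$ by \cref{eq:alliptic}, and $d$ controlled in view of \cref{sep2218}, \cref{eq:Hderivbd}, and the moment bounds \cref{eq:momentcondition}. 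Standard parabolic regularity then provides the smoothness needed for the calculations below.

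Next, I would mirror the $L^{1}$-contraction computation of \cref{prop:prop0916}, applied to $w$ on a fundamental domain. Since $\mu \in \overline{\mathscr{P}}_{G}(\mathcal{A}_{N})$, the joint distribution of $(v_{i}, v_{j})$ is $L\Z$-invariant for some $L > 0$ (the given period if $G = L\Z$, or any $L > 0$ if $G = \R$). Multiplying the equation by $F_{\eps}'(w)$ with $F_{\eps}$ the smooth convex approximation of $|\cdot|$ from \cref{sep2412}--\cref{sep2320}, integrating by parts over $[0, L]$, and sending $\eps \to 0$ yields, with the singular term interpreted via the co-area formula,
\[
  \frac{\dif}{\dif t}\int_{0}^{L}|w(t, x)|\, \dif x = -\!\!\!\sum_{\substack{x \in [0, L] \\ w(t, x) = 0}}\!\! a(t, x)\, |\partial_{x}w(t, x)| + \big[a\, \partial_{x}|w| + d\, |w|\big]_{x = 0}^{x = L}.
\]
Taking expectations, the boundary term vanishes by $L\Z$-periodicity of the joint distribution of $(v_{i}, v_{j})$. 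Meanwhile, time stationarity forces $\E\int_{0}^{L}|w(t, x)|\, \dif x$ to be constant in $t$. Combining,
\[
  \E\!\!\!\sum_{\substack{x \in [0, L] \\ w(t, x) = 0}}\!\! a(t, x)\, |\partial_{x}w(t, x)| = 0 \quad \text{for a.e. } t \geq 0.
\]
Uniform ellipticity $a \geq \kappa_{0}$ then forces that almost surely every zero of $w(t, \cdot)$ in $[0, L]$ is non-transversal (i.e.\ has $\partial_{x}w = 0$); letting $L \to \infty$ along a countable sequence, the same holds on all of $\R$ for a.e.\ $t \geq 0$.

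To upgrade ``no transversal zeros'' to ``no zeros at all'', I would invoke a Sturm-type theorem of Angenent for linear parabolic equations with Hölder coefficients: the set of times $t > 0$ at which $w(t, \cdot)$ admits a non-transversal zero in any fixed bounded interval is at most countable. Intersecting with the full-measure set from the rigidity above, almost surely there is a dense set of times $t > 0$ at which $w(t, \cdot)$ has no zeros whatsoever. At such $t$, continuity in $x$ and connectedness of $\R$ force $w(t, \cdot)$ to have a single sign. Finally, time stationarity of the joint law of $(v_{i}, v_{j})$ transfers this conclusion to $t = 0$, proving the proposition.

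The main obstacle is the rigorous justification of the $\eps \to 0$ passage and the co-area identification of the singular term in the present stationary, non-compact setting. The calculations closely parallel those in the proof of \cref{prop:prop0916,prop-sep2402}, but one must use the $L\Z$-periodicity of the joint law (rather than pointwise periodicity of the functions $v_{i}, v_{j}$) to cancel the boundary contributions in expectation, and the moment assumption \cref{eq:momentcondition} (via Cauchy--Schwarz-type estimates) to ensure that the various integrands $a\, \partial_{x}|w|$ and $d\, |w|$ are integrable against the relevant measures.
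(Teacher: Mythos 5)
The proposal follows the same broad strategy as the paper (approximate $|\cdot|$ by $F_\eps$, integrate the evolution of $F_\eps(w)$ over a fundamental domain, take expectations, use stationarity to force the dissipation to vanish, then conclude non-transversality of zeros), but it has a genuine gap at the central technical step, and its endgame is a different route that may not actually apply.

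The key gap is in the treatment of the term produced by $F_\eps''$. Writing the difference equation in the form $\partial_t w=\partial_x[a\partial_x w+dw]$ with $a=\int_0^1\kappa(v_j+\tau w)\,\dif\tau$ forces $d=\partial_x a-b$, and $\partial_x a$ contains $\kappa'(v_j+\tau w)\,(\partial_x v_j+\tau\partial_x w)$. Since \cref{sep2218} only gives $|\kappa'(u)|\lesssim 1+|u|$ and the stationary hypotheses \cref{eq:momentcondition} only control $\E H(v_i)$ and $\E(\partial_x v_i)^2$, the resulting term after multiplying by $F_\eps'(w)$ and integrating by parts is of order $\int (1+|u|)\,|\partial_x u|^2\,\one_{\{|w|\le\eps\}}$, whose expectation is not controlled by the hypotheses; in particular it does not go to zero with $\eps$. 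The claim that ``the calculations closely parallel those in the proofs of \cref{prop:prop0916,prop-sep2402}'' is misleading: both of those propositions rely on a uniform ($L^\infty$ or weighted $L^\infty$) bound on the solutions, which is precisely what is unavailable in the stationary setting. The paper's proof instead factors the diffusive difference as $\kappa(u_1)\partial_x\eta+[\kappa(u_1)-\kappa(u_2)]\partial_x u_2$ so that no derivative of $\kappa(u)$ appears, and then exploits the \emph{global} H\"older bound $\|\kappa\|_{\mathcal{C}^{\alpha_\kappa}}<\infty$ with $\alpha_\kappa>1/2$ (\assuref{kappa}) together with $|\eta|F_\eps''(\eta)\le C\one_{[-\eps,\eps]}(\eta)$ to produce the quantitative factor $\eps^{2\alpha_\kappa-1}\to 0$ in \cref{eq:kappa-diff,eq:eps-upper}. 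This is where the restriction $\alpha_\kappa>1/2$ is actually used, and the student's argument does not reproduce it.

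The endgame is also a genuine departure from the paper. You invoke Angenent's Sturm-type theorem to conclude that the times at which a non-transversal zero exists form a countable set. But Angenent's results require the coefficients (in particular the zeroth- and first-order terms) of the linearized equation to be bounded, which again fails here because the lower-order coefficient involves $\partial_x v_j$ and $\kappa'$. The paper avoids this entirely by using the continuity of $\partial_x\eta$ from \cref{prop-sep2304} and then the parabolic Hopf lemma / strong maximum principle (as in \cite[Lemma~3.10 and Proposition~3.9]{DGR21}), which is more elementary and only needs local regularity rather than global coefficient bounds. If you wish to keep the Sturm-theoretic route, you would need to verify its hypotheses carefully in the present low-regularity, non-compact, moment-bounded setting, which is not immediate.
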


\begin{proof}
  It suffices to consider the case $N=2$. The statement and proof are similar to those of~\cite[Proposition~3.9]{DGR21}.
  Let $(v_{1},v_{2}, \theta)\sim\mu$ be independent of the noise $V$.
  For each $i \in \{1, 2\}$, let $u_{i}$ solve \cref{eq:uPDE} with initial condition $u_{i}(\theta-)=v_{i}$ at time $\theta$.
  Since $\theta$ is uniformly distributed in $\R/\Z$, we can restrict to the full-measure event $\theta \neq 0$.

  Let $F(x)=|x|$ and let $\{F_{\eps}\}_{\eps\in(0,1]}$ be a family of functions as in~\cite[Lemma 3.4]{DGR21}:
  each~$F_{\eps}$ is convex and there is a constant $C$ so that
  \begin{equation}
    \label{eq:Fppbound}
    F_{\eps}(\xi) \le C(|\xi|+\eps), \quad |\xi F_{\eps}'(\xi)| \le CF_{\eps}(\xi), \quad |F_{\eps}'(\xi)| \le C, \quad |\xi|F_{\eps}''(\xi) \le C\one_{[-\eps,\eps]}(\xi).
  \end{equation}
  Moreover, we can assume that $F_\eps$ is independent of $\eps$ outside $[-1, 1]$.
  By \cref{eq:uPDE} and the chain rule, the difference $\eta=u_{1}-u_{2}$ satisfies
  \begin{equation}
    \label{eq:chainruleFepseta}
    \partial_{t} F_{\eps}(\eta)=F'_{\eps}(\eta)\partial_{x}\left[\kappa(u_{1})\partial_{x}u_{1}-\kappa(u_{2})\partial_{x}u_{2}-H(u_{1})+H(u_{2})\right]
  \end{equation}
  at non-integer times.
  
  Let $\hL = L$ if $G = L\Z$ for $L > 0$ and $\hL = 1$ if $G = \R$, and
  integrate \cref{eq:chainruleFepseta} over~$(\theta, 1) \times [0, \hL]$.
  Integrating by parts in space, we find:
  \begin{equation}
    \label{eq:diff-int}
    \begin{aligned}
      \int_0^{\hL} F_\eps(\eta) \, \dif x \, &\Big|_{t = \theta}^{t = 1-} = \int_\theta^1 F_\eps(\eta)\left[\kappa(u_{1})\partial_{x}u_{1}-\kappa(u_{2})\partial_{x}u_{2}-H(u_{1})+H(u_{2})\right] \dif t \,\Big|_{x = 0}^{x = \hL}\\
      &- \int_\theta^1\!\!\int_0^{\hL}
      F''_{\eps}(\eta)\partial_{x}\eta\left[\kappa(u_{1})\partial_{x}u_{1}-\kappa(u_{2})\partial_{x}u_{2}-H(u_{1})+H(u_{2})\right]\,\dif x\,\dif t.
    \end{aligned}
  \end{equation}
  Next, we write
  \begin{equation}
    \label{eq:diff-grouped}
    \kappa(u_{1})\partial_{x}u_{1}-\kappa(u_{2})\partial_{x}u_{2} = \kappa(u_1) \partial_x \eta +[\kappa(u_1) - \kappa(u_2)] \partial_xu_2.
  \end{equation}
  Recall that \assuref{kappa} states that $\kappa$ is uniformly $\alpha_\kappa$-H\"older regular for some $\alpha_\kappa \in (1/2, 1)$.
  Using this regularity, \cref{eq:Fppbound}, and Young's inequality, we find
  \begin{equation}
    \label{eq:kappa-diff}
    \begin{aligned}
      \big|F''_{\eps}(\eta)\partial_{x}\eta [\kappa(u_1) - \kappa(u_2)] \partial_xu_2\big| &\leq \|\kappa\|_{\mathcal{C}^{\alpha_\kappa}} F''_{\eps}(\eta) |\partial_x \eta| |\eta|^{\alpha_\kappa} |\partial_x u_2|\\
      &\leq \frac{\kappa_0}{4} F''_{\eps}(\eta) |\partial_x \eta|^2 + \frac{\|\kappa\|_{\mathcal{C}^{\alpha_\kappa}}^2}{\kappa_0} F''_{\eps}(\eta) |\eta|^{2\alpha_\kappa} |\partial_x u_2|^2\\
      &\leq \frac{\kappa_0}{4} F''_{\eps}(\eta) |\partial_x \eta|^2 + \|\kappa\|_{\mathcal{C}^{\alpha_\kappa}}^2 \kappa_0^{-1} \eps^{2\alpha_\kappa - 1} |\partial_x u_2|^2.
    \end{aligned}
  \end{equation}
  Similarly, \cref{eq:Hderivbd}, \cref{eq:Fppbound}, and Young's inequality imply
  \begin{equation}
    \label{eq:Hamiltonian-diff}
    \begin{aligned}
      \big|F''_{\eps}(\eta)\partial_{x}\eta [H(u_{1}) - H(u_{2})]\big| &\leq C F''_{\eps}(\eta) |\partial_x\eta| |\eta| (|u_1| + 1)^{q/2}\\
      &\leq \frac{\kappa_0}{4} F''_{\eps}(\eta) |\partial_x\eta|^2 + C^2 \kappa_0^{-1} \eps (|u_1| + 1)^{q}.
    \end{aligned}
  \end{equation}
  Combining \cref{eq:diff-grouped}, \cref{eq:kappa-diff}, and \cref{eq:Hamiltonian-diff} and using \cref{eq:alliptic} we obtain
  \begin{equation}
    \label{eq:integrand-bound}
    \begin{aligned}
      -F''_{\eps}(\eta)\partial_{x}\eta\big[\kappa(u_{1})&\partial_{x}u_{1}-\kappa(u_{2})\partial_{x}u_{2}-H(u_{1})+H(u_{2})\big]\\
      &\leq -\frac{\kappa_0}{2} F''_{\eps}(\eta) |\partial_x \eta|^2 + C \eps^{2\alpha_\kappa - 1} |\partial_x u_2|^2 + C \eps (|u_1| + 1)^{q},
    \end{aligned}
  \end{equation}
  where we allow $C$ to change from line to line.
  Now \cref{eq:momentcondition} and \cref{eq:Hbd} imply that
  \begin{equation}
    \label{eq:integral-moments}
    \E \int_\theta^1\!\!\int_0^{\hL}
    \big(|\partial_x u_2|^2 + (|u_1| + 1)^{q}\big)\,\dif x\,\dif t < \infty.
  \end{equation}
  Thus, we have
  \begin{equation}
    \label{eq:one-sided-moment}
    \begin{aligned}
      \E\Big(-\int_\theta^1\!\!\int_0^{\hL}
      F''_{\eps}(\eta)\partial_{x}\eta&
      \big[\kappa(u_{1})\partial_{x}u_{1}-\kappa(u_{2})\partial_{x}u_{2}-H(u_{1})+H(u_{2})\big] \,\dif x\,\dif t\Big)_+\\
      &\leq C \eps^{2 \alpha_\kappa - 1} \E \int_\theta^1\!\!\int_0^{\hL}
      \left[|\partial_x u_2|^2 + (|u_1| + 1)^{q}\right] \dif x\,\dif t < \infty.
    \end{aligned}
  \end{equation}
  On the other hand, $G$-invariance and time stationarity imply that
  \begin{equation}
    \label{eq:two-differences}
    \int_0^{\hL} F_\eps(\eta) \, \dif x \, \Big|_{t = \theta}^{t = 1-} - \int_\theta^1 F_\eps(\eta)\left[\kappa(u_{1})\partial_{x}u_{1}-\kappa(u_{2})\partial_{x}u_{2}-H(u_{1})+H(u_{2})\right] \dif t \,\Big|_{x = 0}^{x = \hL}
  \end{equation}
  can be written as the difference of two identically distributed random variables.
  Therefore,~\cref{eq:diff-int},~\cref{eq:one-sided-moment}, and \cite[Lemma~D.1]{DGR21} imply that the expression in \cref{eq:two-differences} is absolutely integrable and has zero expectation.
  Taking expectation in \cref{eq:diff-int}, \cref{eq:integrand-bound}, and \cref{eq:integral-moments} yields
  \begin{equation}
    \label{eq:eps-upper}
    \E \int_\theta^1\!\!\int_0^{\hL}
    F''_{\eps}(\eta) |\partial_x \eta|^2 \,\dif x\,\dif t\leq C \eps^{2 \alpha_\kappa - 1}.
  \end{equation}
  
  We now take $\eps \to 0$.
  Let $\zeta \in \mathcal{C}_c^\infty(\R)$ be nonnegative with
  \begin{equation*}
    \int_{\R} \zeta \dif x = 1,
  \end{equation*}
  and let $\tilde{\zeta} \coloneqq \zeta \ast \one_{[0, \hL]}$.
  Mimicking the proof of \cite[Proposition~3.7]{DGR21}, we can use \cref{eq:eps-upper}, $G$-invariance, and the coarea formula to show that
  \begin{equation*}
    \int_\theta^1 \sum_{y \in \eta(t, \anon)^{-1}(0)} |\partial_x \eta(t, y)| \tilde{\zeta}(y) \, \dif t = \frac{1}{2}\lim_{\eps \to 0} \E \int_\theta^1 \!\!\int_0^{\hL} F''_{\eps}(\eta(t, x)) |\partial_x \eta(t, x)|^2 \,\dif x \,\dif t = 0
  \end{equation*}
  almost surely.
  If we translate $\zeta$ (and thus $\tilde\zeta$) along $\R$, we see that with probability 1, we have 
  $\partial_x \eta(t, \anon) = 0$ wherever $\eta(t, \anon) = 0$ for almost every $t \in [\theta, 1)$.
  \cref{prop-sep2304} and~\cite[Theorem~4.8]{L96} imply that $\partial_x \eta$ is continuous in spacetime.
  Following the proof of~\cite[Lemma~3.10]{DGR21}, we can show that with probability $1,$ we have 
  $\partial_x \eta = 0$ wherever~$\eta = 0$ in~$(\theta, 1) \times \R$.
  The proof of \cite[Proposition~3.9]{DGR21} shows that this contradicts the parabolic Hopf lemma unless $\sgn\eta$ is a random constant independent of $(t, x) \in (\theta, 1) \times \R$.
  The comparison principle implies that then $\sgn\eta$ is almost surely a random constant independent of~$(t, x) \in (\theta, \infty) \times \R$.
  By time stationarity, we also have $\eta(\theta + 1, \anon) \overset{\text{law}}{=} \eta(\theta, \anon)$. 
  The proposition follows.
\end{proof}
Next, we apply this proposition to extremal solutions.
\begin{cor}
  \label{cor:extremal-ordering}
  Let $\mu$ and $(\mathbf{v}, \theta) \sim \mu$ satisfy the hypotheses of \cref{prop:ordering}.
  Suppose $\Law(v_i, \theta) \in \overline{\mathscr{P}}_G^{\mathrm{e}}(\mathcal{A}_1)$ for every $i \in \{1, \ldots, N\}$.
  Then, if $\mathbf{a} \coloneqq \E \mathbf{v}(X)$, with probability $1$ we have
  \begin{equation*}
    \sgn\big(v_i(x) - v_j(x)\big) = \sgn(a_i - a_j)
  \end{equation*}
  for all $x \in \R$ and every $i,j \in \{1, \ldots, N\}$.
\end{cor}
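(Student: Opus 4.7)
The plan is to reduce the corollary to \cref{prop:ordering} via an ergodic-decomposition argument that exploits the extremality of each one-dimensional marginal. By \cref{prop:ordering}, for each pair $i,j$ there is a random variable $\chi_{ij}\in\{-1,0,1\}$ such that, almost surely, $\sgn(v_i(x)-v_j(x))=\chi_{ij}$ for every $x\in\R$. The whole task is then to show that $\chi_{ij}$ is in fact the deterministic constant $\sgn(a_i-a_j)$.

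To that end, for each $\epsilon\in\{-1,0,1\}$ set $A_{ij}^\epsilon\coloneqq\{\chi_{ij}=\epsilon\}$. These three events partition $\Omega$. I claim each $A_{ij}^\epsilon$ is both $G$-invariant and $\mathcal{P}_t$-invariant. The $G$-invariance is immediate because the pointwise ordering of $v_i$ and $v_j$ is preserved by spatial translation. The time invariance follows from the $L^1$-comparison principle underlying \cref{prop:prop0916}: the co-cycle $\Phi_{\theta,\theta+t}$ preserves pointwise ordering of initial data (the noise adds the same kick $\mathsf{V}_s$ to both components). Consequently, conditioning $\mu$ on any $A_{ij}^\epsilon$ of positive probability yields a measure $\mu^\epsilon_{ij}\in\overline{\mathscr{P}}_G(\mathcal{A}_N)$, and the decomposition
\begin{equation*}
  \mu = \sum_{\epsilon\in\{-1,0,1\}} \mu(A_{ij}^\epsilon)\,\mu^\epsilon_{ij}
\end{equation*}
projects under the $i$-th marginal to a convex decomposition of $\Law(v_i,\theta)$ into elements of $\overline{\mathscr{P}}_G(\mathcal{A}_1)$.

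Now I apply the extremality hypothesis. Since $\Law(v_i,\theta)\in\overline{\mathscr{P}}_G^{\mathrm{e}}(\mathcal{A}_1)$, every component in the above decomposition with positive weight must equal $\Law(v_i,\theta)$. In particular, for any $\epsilon$ with $\mu(A_{ij}^\epsilon)>0$, the conditional mean satisfies $\E[v_i(X)\mid A_{ij}^\epsilon]=a_i$, and likewise $\E[v_j(X)\mid A_{ij}^\epsilon]=a_j$. But on $A_{ij}^{+1}$ we have $v_i(x)>v_j(x)$ strictly for every $x$; since $X$ is (almost surely) a point at which this strict inequality holds, positivity of conditional expectation of a strictly positive random variable gives $a_i>a_j$. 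Analogously $A_{ij}^{-1}$ forces $a_i<a_j$ and $A_{ij}^0$ forces $a_i=a_j$. Therefore at most one $\epsilon$, namely $\epsilon=\sgn(a_i-a_j)$, can be compatible with the ordering of $a_i$ and $a_j$, and $\chi_{ij}=\sgn(a_i-a_j)$ almost surely.

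The only point that requires a small amount of care is verifying strict positivity of the conditional expectations, i.e.\ ruling out $\E[v_i(X)-v_j(X)\mid A_{ij}^\epsilon]=0$ when $v_i-v_j$ has constant strict sign; this follows from the standard fact that a nonnegative random variable with zero conditional expectation on an event of positive measure vanishes a.s.\ on that event, contradicting the strict pointwise inequality (which holds at the deterministic point $X=0$ when $G=\R$, and at a.e.\ $x\in[0,L)$ when $G=L\Z$, using continuity of $v_i-v_j$). I do not expect any genuine obstacle; the argument is essentially a clean ergodic-theoretic consequence of \cref{prop:ordering} and the definition of extremality.
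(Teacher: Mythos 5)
Your proof is correct and follows essentially the same route as the paper: condition $\mu$ on the events $\{\chi_{ij}=\epsilon\}$, argue these are preserved by translations and by the dynamics (comparison principle), invoke extremality of $\Law(v_i,\theta)$ to identify the conditional one-dimensional marginals with the unconditional ones, and then compare conditional means with the strict sign to pin down $\chi_{ij}=\sgn(a_i-a_j)$. The only cosmetic difference is that the paper reduces at the outset to $N=2$ and phrases the final step as $b=\E[\sgn(v_1(X)-v_2(X))\mid\chi=b]=\sgn(\E[v_1(X)-v_2(X)\mid\chi=b])=\sgn(a_1-a_2)$, which is the same strict-positivity observation you make; also, the ordering-preservation step is the parabolic comparison (maximum) principle rather than the $L^1$-contraction of \cref{prop:prop0916}, which is a distinct (if related) fact.
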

\begin{proof}
  Again, it suffices to consider the case $N = 2$.
  Our argument follows the proof of \cite[Proposition~6.1]{DGR21}.
  By \cref{prop:ordering}, with probability $1$ the random variable
  \begin{equation*}
    \chi \coloneqq \sgn(v_1(x) - v_2(x))
  \end{equation*}
  does not depend on $x \in \R$.
  Given $b \in \{0, \pm 1\}$ and $i \in \{1, 2\}$, define
  \begin{equation*}
    \mu_{i,b} \coloneqq \Law((v_i, \theta) \mid \chi = b)
  \end{equation*}
  if $\P(\chi = b) > 0$.
  Otherwise, let $\mu_{i,b} \coloneqq \Law(v_i, \theta)$.
  By the comparison principle, $\mu_{i,b}$ is time-invariant.
  Moreover, it is $G$-invariant because $\mu$ and $b$ are $G$-invariant.
  Therefore, we know that $\mu_{i,b} \in \overline{\mathscr{P}}_G(\mathcal{A}_N)$.
  Now, we can write $\Law(v_i,\theta)$ as the convex combination
  \begin{equation*}
    \Law(v_i,\theta) = \sum_{b \in \{0, \pm 1\}} \P(\chi = b) \mu_{i,b}.
  \end{equation*}
  Since $\Law(v_i,\theta)$ is extremal, we have $\mu_{i,b} = \Law(v_i,\theta)$ for all $b \in \{0, \pm 1\}$.
  Therefore, if~$X \sim \Uniform(\R/G)$ is independent of all else and $\P(\chi = b) > 0$, we have
  \begin{align*}
    \E\left[v_1(X) - v_2(X) \mid \chi = b\right] &= \E\left[v_1(X) \mid \chi = b\right] - \E \left[v_2(X) \mid \chi = b\right]\\
                                                 &= \E v_1(X) - \E v_2(X) = a_1 - a_2.
  \end{align*}
  Because $\chi$ does not depend on $x$, this implies that
  \begin{equation*}
    b = \E[\sgn\big(v_1(X) - v_2(X)\big)\mid \chi = b] = \sgn\big(\E\left[v_1(X) - v_2(X) \mid \chi = b\right]\big) = \sgn(a_1 - a_2).
  \end{equation*}
  Therefore $\P(\chi \neq \sgn(a_1 - a_2)) = 0$, as desired.
\end{proof}

\subsubsection*{The proof of \cref{thm:stochordering}}

Let $G$ be $\R$ or $L\Z$ for some $L > 0$.  For each $i \in \{1, 2\}$, fix $N_i \in \N$, let $N \coloneqq N_1 + N_2$, and 
take~$\mu_i \in \overline{\mathscr{P}}_G^{\mathrm{e}}(\mathcal{A}_{N_i})$.
Assume that $(v_{i;1}, \ldots, v_{i; N_i}, \theta_i) \sim \mu_i$ satisfies \cref{eq:momentcondition}.
By \cref{prop:cancouple}, there exists a coupling $\mu \in \mathscr{P}_G(\mathcal{A}_{N})$ of $\mu_1$ and $\mu_2$ in the sense of \cref{eq:isacoupling}.
Using \cref{prop:ordering}, we will show that $\mu$ is extremal, i.e.\ that $\mu \in \overline{\mathscr{P}}_G^{\mathrm{e}}(\mathcal{A}_{N})$.

Suppose there exist $\mu^{(0)}, \mu^{(1)} \in \mathscr{P}_G(\mathcal{A}_{N})$ and $\gamma \in (0, 1)$ such that
\begin{equation*}
  \mu = \gamma \mu^{(0)} + (1 - \gamma) \mu^{(1)}.
\end{equation*}
Then, if $(\mathbf{v},\theta) \sim \mu$ and $(\mathbf{v}^{(\ell)}, \theta^{(\ell)}) \sim \mu^{(\ell)}$ for $\ell \in \{0, 1\}$, \cref{eq:isacoupling} and \cref{eq:momentcondition} imply that
\begin{equation*}
  \E H(v_{i;j}^{(\ell)}(X)), \, \E (\partial_x v_{i;j}^{(\ell)}(X))^2 \leq \max\{\gamma^{-1}, (1 - \gamma)^{-1}\} \max\{\E H(v_{i;j}(X)), \E (\partial_x v_{i;j}(X))^2\} < \infty
\end{equation*}
for all $i \in \{1,2\}$ and $j\in \{1, \ldots, N_i\}$.
That is, $\mu^{(0)}$ and $\mu^{(1)}$ satisfy the hypotheses of \cref{prop:cancouple}.
Thus there exists $\hat{\mu} \in \overline{\mathscr{P}}_G(\mathcal{A}_{2N})$ such that if $((\mathbf{v}^{(0)}, \mathbf{v}^{(1)}), \theta) \sim \hat{\mu}$, then
\begin{equation*}
  \Law(\mathbf{v}^{(\ell)}, \theta) = \mu^{(\ell)} \quad \text{for each } \ell \in \{0, 1\}.
\end{equation*}

Fix $i \in \{1, 2\}$ and $j \in \{1, \ldots, N_i\}$.
We claim that the marginal $\mu_{i;j} \coloneqq \Law(v_{i;j}, \theta)$ of $\mu_i$ is extremal.
If it were a nontrivial convex combination of measures in $\overline{\mathscr{P}}_G(\mathcal{A}_1)$, we could use \cref{prop:cancouple} to couple \emph{those} measures to the remaining components of $\mu_i$ and thus write $\mu_i$ as a nontrivial convex combination.
It follows that
\begin{equation*}
  v_{i;j}^{(\ell)} \sim \mu_{i;j} \in \overline{\mathscr{P}}_G^{\mathrm{e}}(\mathcal{A}_1),
\end{equation*}
for each $\ell \in \{0, 1\}$.
By \cref{cor:extremal-ordering}, we have $v_{i;j}^{(0)} = v_{i; j}^{(1)}$ almost surely.
Since this holds for all $i,j$, we  have $\mathbf{v}^{(0)} = \mathbf{v}^{(1)}$ almost surely.
In particular, $\mu^{(0)} = \mu^{(1)}$.
Therefore, $\mu$ is extremal.

Finally, the extremality of the marginals $\Law(v_{i;j},\theta)$ and \cref{cor:extremal-ordering} imply \cref{eq:ordering}.
\hfill$\Box$
\medskip

Now, \cref{cor:uniqueness} follows from \cref{thm:stochordering}.
\begin{proof}[Proof of \cref{cor:uniqueness}]
  Take $N$, $G$, and $X$ as in the statement of \cref{cor:uniqueness}.
  Fix $\mathbf{a} \in \R^N$, and suppose there are two measures $\mu_1,\mu_2 \in \overline{\mathscr{P}}_G^{\mathrm{e}}(\mathcal{A}_N)$ with $(\mathbf{v}_i, \theta_i) = (v_{i;1},\ldots,v_{i;N},\theta_i)\sim \mu_i$ such that $\E \mathbf{v}_i(X) = \mathbf{a}$ and
  \begin{equation*}
    \E H\big(v_{i;j}(X)\big), \, \E \big(\partial_x v_{i;j}(X)\big)^2 < \infty \quad \text{for each } i \in \{1, 2\}, \, j \in \{1, \ldots, N\}.
  \end{equation*}
  By \cref{thm:stochordering}, there exists a coupling $\overline{\mathscr{P}}_G^{\mathrm{e}}(\mathcal{A}_{2N})$ satisfying \cref{eq:isacoupling} and \cref{eq:ordering}.
  In particular,
  \begin{equation*}
    0 = \sgn\big(\E v_{1;j}(X) - \E v_{2;j}(X)\big) = \chi_{1;j, 2;j} \quad \text{for every } j \in \{1, \ldots, N\}.
  \end{equation*}
  So $\mathbf{v}_1 = \mathbf{v}_2$ almost surely, and $\mu_1 = \mu_2$.
\end{proof}

\bibliographystyle{hplain-ajd}
\bibliography{burgers}

\begin{thebibliography}{10}
\expandafter\ifx\csname url\endcsname\relax
  \def\url#1{\texttt{#1}}\fi
\expandafter\ifx\csname doi\endcsname\relax
  \def\doi#1{\burlalt{doi:#1}{http://dx.doi.org/#1}}\fi
\expandafter\ifx\csname urlprefix\endcsname\relax\def\urlprefix{URL }\fi
\expandafter\ifx\csname href\endcsname\relax
  \def\href#1#2{#2}\fi
\expandafter\ifx\csname burlalt\endcsname\relax
  \def\burlalt#1#2{\href{#2}{#1}}\fi

\bibitem{Bak16}
Yuri Bakhtin.
\newblock Inviscid {B}urgers equation with random kick forcing in noncompact
  setting.
\newblock {\em Electron. J. Probab.}, 21:Paper No. 37, 50, 2016.

\bibitem{BCK14}
Yuri Bakhtin, Eric Cator, and Konstantin Khanin.
\newblock Space-time stationary solutions for the {B}urgers equation.
\newblock {\em J. Amer. Math. Soc.}, 27(1):193–238, 2014.

\bibitem{BC21}
Yuri Bakhtin and Hong-Bin Chen.
\newblock Dynamic polymers: invariant measures and ordering by noise.
\newblock {\em Probab. Theory Related Fields}, 2021.

\bibitem{BK18}
Yuri Bakhtin and Konstantin Khanin.
\newblock On global solutions of the random {H}amilton–{J}acobi equations and
  the {KPZ} problem.
\newblock {\em Nonlinearity}, 31(4):R93–R121, 2018.

\bibitem{BL18}
Yuri Bakhtin and Liying Li.
\newblock Zero temperature limit for directed polymers and inviscid limit for
  stationary solutions of stochastic {B}urgers equation.
\newblock {\em J. Stat. Phys.}, 172(5):1358–1397, 2018.

\bibitem{BL19}
Yuri Bakhtin and Liying Li.
\newblock Thermodynamic limit for directed polymers and stationary solutions of
  the {B}urgers equation.
\newblock {\em Comm. Pure Appl. Math.}, 72(3):536–619, 2019.

\bibitem{Bor13sharp}
Alexandre Boritchev.
\newblock Sharp estimates for turbulence in white-forced generalised {B}urgers
  equation.
\newblock {\em Geom. Funct. Anal.}, 23(6):1730–1771, 2013.

\bibitem{Bor14Review}
Alexandre Boritchev.
\newblock Turbulence for the generalised {B}urgers equation.
\newblock {\em Russian Math. Surveys}, 69(6):957, 2014.

\bibitem{Bor16multi}
Alexandre Boritchev.
\newblock Multidimensional potential {B}urgers turbulence.
\newblock {\em Comm. Math. Phys.}, 342(2):441–489, 2016.

\bibitem{DV15}
Arnaud Debussche and Julien Vovelle.
\newblock Invariant measure of scalar first-order conservation laws with
  stochastic forcing.
\newblock {\em Probab. Theory Related Fields}, 163:575–611, 2015.

\bibitem{DS05}
Nicolas Dirr and Panagiotis~E. Souganidis.
\newblock Large-time behavior for viscous and nonviscous {H}amilton-{J}acobi
  equations forced by additive noise.
\newblock {\em SIAM J. Math. Anal.}, 37(3):777–796, 2005.

\bibitem{Dun20}
Alexander Dunlap.
\newblock Existence of stationary stochastic {B}urgers evolutions on {${\bf
  R}^2$} and {${\bf R}^3$}.
\newblock {\em Nonlinearity}, 33(12):6480–6501, 2020.

\bibitem{DGR21}
Alexander Dunlap, Cole Graham, and Lenya Ryzhik.
\newblock Stationary solutions to the stochastic {B}urgers equation on the
  line.
\newblock {\em Comm. Math. Phys.}, 382(2):875–949, 2021.

\bibitem{DR21}
Alexander Dunlap and Lenya Ryzhik.
\newblock Viscous shock solutions to the stochastic {B}urgers equation.
\newblock {\em Arch. Ration. Mech. Anal.}, 242(2):937–971, 2021.

\bibitem{GIKP05}
Diogo Gomes, Renato Iturriaga, Konstantin Khanin, and Pablo Padilla.
\newblock Viscosity limit of stationary distributions for the random forced
  {B}urgers equation.
\newblock {\em Mosc. Math. J.}, 5(3):613–631, 743, 2005.

\bibitem{H08}
Martin Hairer.
\newblock Ergodic theory for stochastic {PDE}s, July 2008.
\newblock \url{http://www.hairer.org/notes/Imperial.pdf}.

\bibitem{JRA20}
Christopher Janjigian and Firas Rassoul-Agha.
\newblock Busemann functions and {G}ibbs measures in directed polymer models on
  {$\mathbb Z^2$}.
\newblock {\em Ann. Probab.}, 48(2):778–816, 2020.

\bibitem{JRAS21}
Christopher Janjigian, Firas Rassoul-Agha, and Timo Seppäläinen.
\newblock Geometry of geodesics through {B}usemann measures in directed
  last-passage percolation.
\newblock {\em J. Eur. Math. Soc.}, to appear, arXiv:
  \burlalt{1908.09040v3}{http://arxiv.org/abs/1908.09040v3}.

\bibitem{L96}
Gary~M. Lieberman.
\newblock {\em Second order parabolic differential equations}.
\newblock World Scientific Publishing Co., Inc., River Edge, NJ, 1996.

\bibitem{Varadarajan}
Veeravalli~S. Varadarajan.
\newblock Groups of automorphisms of {B}orel spaces.
\newblock {\em Trans. Amer. Math. Soc.}, 109:191–220, 1963.

\end{thebibliography}

\end{document}